\newtheorem{theorem}{Theorem}[section]
\newtheorem{corollary}[theorem]{Corollary}
\newtheorem{lemma}[theorem]{Lemma}
\newtheorem{proposition}[theorem]{Proposition}
\theoremstyle{definition}
\newtheorem{definition}[theorem]{Definition}
\newtheorem{example}[theorem]{Example}
\newtheorem{problem}[theorem]{Problem}
\newtheorem{conjecture}{Conjecture}
\numberwithin{equation}{section}
\theoremstyle{remark}
\newcommand{\tabincell}[2]{\begin{tabular}{@{}#1@{}}#2\end{tabular}}
\begin{document}

\title{Saxl Conjecture for triple hooks}
\pdfbookmark[0]{Saxl Conjecture for triple hooks}{}

\author{Xin Li}
\address{Department of Mathematics, Zhejiang University of Technology, Hangzhou 310023, P. R. China}
\email{xinli1019@126.com}
\thanks{The research is supported by National Natural Science Foundation of China (Grant No.11801506).}

\subjclass[2010]{Primary 20C30; Secondary 05E15}
\keywords{Saxl Conjecture, Dominance order,  Kronecker coefficient, Semigroup property, Tensor square}

\begin{abstract}
We make some progresses on Saxl Conjecture. Firstly, we show that the probability that a partition is comparable in dominance order to the staircase partition tends to zero as the staircase partition grows.  Secondly, for partitions whose Durfee size is $k$ where $k\geq3$,  by semigroup property, we show that there exists a number $n_k$ such that
if the tensor squares of the first $n_k$ staircase partitions contain all irreducible representations corresponding to partitions with Durfee size $k$, then all tensor squares contain partitions with Durfee size $k$. Specially, we show that $n_3=14$ and $n_4=28$. Furthermore, with the help of computer we show that
the Saxl Conjecture is true for all triple hooks (i.e. partitions with Durfee size 3). Similar results for chopped square and caret shapes are also discussed.
\end{abstract}

\maketitle

\section{Introduction}\label{se:intro}
In representation theory and related fields, the Kronecker  coefficients play a crucial role.
For partitions $\lambda$, $\mu\vdash n$, let $[\lambda]$ and $[\mu]$ be two irreducible representations of $S_n$. The tensor product $[\lambda]\otimes[\mu]$ is an $S_n$-representation via the diagonal embedding $\pi\mapsto (\pi,\pi)$, $\pi\in S_n$. This $S_n$-representation decomposes as follows
$$[\lambda]\otimes[\mu]=\bigoplus_{\nu\vdash n}g(\lambda,\mu,\nu)[\nu],$$
where  $g(\lambda,\mu,\nu)$ are called \emph{Kronecker coefficients}.
In spite of their importance, little is known about the Kronecker coefficients, leaving some fundamental questions unanswered. For example,  no combinatorial description akin to the Littlewood-Richardson rule is known for the Kronecker coefficients.  Another important question is to determine whether they are positive or not, such as the Saxl Conjecture.

In 2012, J. Saxl conjectured that all irreducible representations of the symmetric group occur in the decomposition of the tensor square of the irreducible representation
corresponding to the staircase partition \cite{PPV}. Let $\rho_m$ denote the staircase partition. So the Saxl Conjecture claims that  $g(\rho_m,\rho_m,\lambda)>0$ for each $\lambda\vdash m(m+1)/2$. Many progresses have been made on this conjecture, see for example \cite{Bessenrodt17,Bessenrodt19,Ikenm,Luo,PPV,PP-1,Sellke}.

For $\lambda\vdash m(m+1)/2$, we say that $\lambda$ satisfies Saxl Conjecture if $g(\rho_m,\rho_m,\lambda)>0$.  In \cite[Thm. 2.1]{Ikenm}, Ikenmeyer showed that if a partition $\nu\vdash m(m+1)/2$  is comparable in the dominance order to the staircase partition $\rho_m$, then $\nu$ satisfies Saxl Conjecture.
From his result, we would like to know the proportion of these partitions in the total partitions of $m(m+1)/2$. In Section \ref{se:domi}, by the result of \cite{BP18}, we show that the proportion tends to zero as $m\to \infty$ (see Corollary \ref{cor:clpn}).
Thus the probability that a partition is comparable to the staircase partition tends to zero as  $m\to \infty$.

Another criterion to find partitions satisfying Saxl Conjecture is based on nonvanishing irreducible characters, see for example \cite[Lemma 1.3]{PPV} and \cite[Cor. 4.4]{Bessenrodt17}.
Based on the character criterion, Bessenrodt showed that all double-hooks (i.e. partitions with Durfee size 2) satisfy Saxl Conjecture \cite[Thm. 4.10]{Bessenrodt17}. Recently, by the results of 2-modular representation theory, Bessenrodt et al. verified
Saxl's conjecture for several large new families of partitions, such as partitions which label height 0 characters and $k$-Carter-Saxl pairs \cite{Bessenrodt19}.

In Section \ref{se:fixed}, we give a generalization of Bessenrodt's result in \cite{Bessenrodt17}. For partitions whose Durfee size is $k$ where $k\geq3$,  by semigroup property, we show that there exists a number $n_k$ such that if the tensor squares of the first $n_k$ staircase partitions contain all irreducible representations corresponding to partitions with Durfee size $k$, then the tensor squares of any staircase partitions contain all partitions (of the same weight) with Durfee size $k$. For example, we show that $n_3=14$ and $n_4=28$. With the help of computer,
for triple hooks (i.e. partitions with Durfee size 3), we verify that  $n_3=14$  can be reduced to 9. Combining with the result of \cite{Luo}, we show that all triple hooks satisfy Saxl Conjecture (see Theorem \ref{thm:dm3}).

Our technique is  elementary and based on the semigroup property of Kronecker coefficients \cite{Christ,Luo}.
We also use the technique to discuss the occurrences of hooks and double-hooks for other self-conjugate partitions, such as the chopped square and caret shapes. Our main idea is as follows.
Let $\lambda\vdash n$ be a self-conjugate partition, such as $\rho_m$. For $\mu\vdash n$, we want to determine if $g(\lambda,\lambda,\mu)>0$. Then we reduce the problem of
deciding $g(\lambda,\lambda,\mu)>0$ to deciding $g(\alpha,\alpha,\nu)>0$, where $\alpha\subseteq\lambda$ is a smaller self-conjugate partition.

The paper is organized as follows. In Section \ref{se:pre},  we summarize basic definitions and results needed in this paper. In Section \ref{se:domi}, we show that the probability that a partition is comparable to $\rho_m$ tends to zero as $m\to\infty$.
In Section \ref{se:fixed}, we discuss the occurrences of partitions with fixed Durfee sizes in tensor squares.
With the help of computer, we show that triple hooks satisfy Saxl Conjecture. Some remarks, problems and a generalised Saxl Conjecture for each $n$  are raised in Section \ref{se:rem}.

In this paper, `with the help of computer' means the results are obtained by using Stembridge's Maple package SF \cite{stem}.

\section{Preliminaries}\label{se:pre}

If $A$ is a set, the cardinality of $A$ is denoted by $|A|$.
A partition $\lambda$ of $n$, denoted by $\lambda\vdash n$, is defined to be a weakly decreasing sequence $\lambda= (\lambda_1,\lambda_2,\ldots)$ of non-negative integers such that the sum $\sum_{i} \lambda_i=n$. We also write $\lambda= (\lambda_1,\lambda_2,\ldots,\lambda_k)$ if $\lambda_{k+1}=\lambda_{k+2}=\cdots=0$. Thus for example
$$(3,3, 2, 1, 0, 0, 0,\ldots)=(3, 3, 2, 1, 0, 0)=(3, 3, 2, 1),$$
as partitions of 9.
The \emph{length} of a partition $\lambda$ is the number of its nonzero entries and is denoted by
$\ell(\lambda)$. The set of all partitions of $n$ is denoted by $P(n)$.
To a partition $\lambda$ we associate its \emph{Young diagram}, which is a top-aligned and left-aligned array of boxes such that in row $i$ we have $\lambda_i$ boxes. Thus for  $\lambda\vdash n$ the corresponding Young diagram has $n$ boxes. For example, for $\lambda=(8,8,8,7,7,4)$ the corresponding Young diagram is

\ytableausetup{smalltableaux}
\centerline{\ydiagram{8,8,8,7,7,4}}
We do not distinguish between a partition $\lambda$ and its Young diagram.
If we transpose a Young diagram at the main diagonal, then we obtain another Young diagram, which is called the \emph{conjugate partition} of $\lambda$ and denoted by $\lambda'$. The row lengths of $\lambda'$ are the column lengths of $\lambda$. In the example above we have $\lambda'=(6,6,6,6,5,5,5,3)$. A partition $\lambda$ is called \emph{self-conjugate} if $\lambda=\lambda'$. Sometimes we use the notation which indicates the number of times each integer occurs as a part of a partition. For example, we write $\lambda=(3,3,3,2,2,1)$  as $(3^3,2^2,1)$  which means that 3  parts of $\lambda$ are equal to $3$, and so on.
We denote by $d(\lambda)$ the Durfee size of $\lambda$, i.e. the number of boxes in the main diagonal of $\lambda$. Let $D(n,k)=\{\mu\in P(n)\mid d(\mu)=k\}$ denote partitions in $P(n)$ whose Durfee size is $k$. Specially, $D\left(\frac{m(m+1)}{2},k\right)$ will be abbreviated as $S(m,k)$.
If the boxes are arranged using matrix coordinates, the \emph{hook} of box $(i,j)$ in a Young diagram is given by the box itself, the boxes to its right and below and is denoted by $h_{i,j}$. The \emph{hook length} is the number of boxes in a hook and denoted by $|h_{i,j}|$.
Define the \emph{principal hook partition} by $\widehat{\lambda}=(|h_{1,1}|,...,|h_{s,s}|)$, where $s = d(\lambda)$. So for $\lambda=(8,8,8,7,7,4)$ above, we have $d(\lambda)=5$ and $\widehat{\lambda}=(13,11,9,6,3)$.
For $m \geq 1$, we call $\rho_m=(m,m-1,...,1)$  the \emph{staircase partition} which is a partition of $\frac{m(m+1)}{2}$.

For $n\in \mathbb{N}$, let $S_n$ denote the symmetric group on $n$ symbols. For a partition $\lambda\vdash n$, let $[\lambda]$ denote the irreducible $S_n$-representation of type $\lambda$. The corresponding irreducible character is denoted by $\chi^\lambda$. For $\nu\vdash n$, let $\chi^\lambda(\nu)$ denote the value of  $\chi^\lambda$ on the conjugacy class of cycle type $\nu$ of the symmetric group $S_n$.

A partition $\lambda$ \emph{dominates} another partition $\mu$, denoted by $\lambda\unrhd\mu$ if for all $k$ we have $\sum_{i=1}^k \lambda_i \geq  \sum_{i=1}^k \mu_i$. If $\lambda$ dominates $\mu$  or $\mu$ dominates $\lambda$, we say that $\lambda$ and $\mu$ are \emph{comparable} in the dominance order. For $\lambda\in P(n)$, let $C(\lambda)\subseteq P(n)$ be the set of partitions which are comparable to $\lambda$. Let $\Lambda(\lambda)\subseteq C(\lambda)$ (resp. $V(\lambda)$) be the set of partitions which are less than or equal to (resp. greater than or equal to) $\lambda$ in dominance order. If $\lambda\trianglelefteq\mu$, then we have $\lambda'\trianglerighteq\mu'$ \cite[Lem. 1.4.11]{JKer81}.
For two partitions $\lambda$ and $\mu$,  let $\lambda+\mu=(\lambda_1+\mu_1, \lambda_2+\mu_2,...)$ and $\lambda-\mu=(\lambda_1-\mu_1, \lambda_2-\mu_2,...)$ denote their rowwise sum and difference. We say $\lambda\subseteq\mu$ if $\lambda_i\leq\mu_i$ for all $i$.

\section{Dominance Order and Saxl Conjecture}\label{se:domi}

Using the result of \cite{BP18}, in this section we will show that the probability that a partition is comparable to $\rho_m$ tends to zero as $m\to \infty$.  It reflects the effectiveness of Ikenmeyer's criterion \cite[Thm. 2.1]{Ikenm}.

Denote by  $\Phi(\rho_m)$ the set of $\lambda\vdash \frac{m(m+1)}{2}$ such that $g(\rho_m,\rho_m,\lambda)>0$. For $\lambda=(\lambda_1, \lambda_2,...)$, if $a\geq \lambda_1$, then  the partition $(a,\lambda_1, \lambda_2,...)$ is denoted by $(a, \lambda)$. Similarly, $(\lambda,1^a)$ denotes the partition $(\lambda_1,...,\lambda_{\ell(\lambda)},1,...,1)$ where  there are $a$ ones behind $\lambda_{\ell(\lambda)}$.
Comparing with Proposition 4.14 of \cite{PPV},  the following gives another lower bound of  $|\Phi(\rho_m)|$.
\begin{proposition}
For $m\geq 3$, there exist at least $2^{m}$ partitions that are comparable to $\rho_m$. In particular, we have $|\Phi(\rho_m)|>2^{m}$.
\end{proposition}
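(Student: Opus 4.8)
The plan is to combine Ikenmeyer's criterion with a symmetry reduction and a recursive counting argument, so that the whole problem becomes a lower bound on the number of partitions dominating $\rho_m$. First I would invoke \cite[Thm.~2.1]{Ikenm}: every partition comparable to $\rho_m$ lies in $\Phi(\rho_m)$, so $C(\rho_m)\subseteq\Phi(\rho_m)$ and it suffices to show that $|C(\rho_m)|\geq 2^m$. Next I exploit that $\rho_m$ is self-conjugate. Since conjugation is an involution on $P(n)$ that reverses dominance (the fact $\lambda\trianglelefteq\mu\Rightarrow\lambda'\trianglerighteq\mu'$ recalled in Section~\ref{se:pre}), it carries $V(\rho_m)$ bijectively onto $\Lambda(\rho_m)$ and fixes only $\rho_m$; as $V(\rho_m)\cap\Lambda(\rho_m)=\{\rho_m\}$, we obtain $|C(\rho_m)|=2|V(\rho_m)|-1$. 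Hence the target reduces to proving $|V(\rho_m)|>2^{m-1}$.

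To bound $|V(\rho_m)|$ I would build two injections from $V(\rho_{m-1})$ into $V(\rho_m)$ using the two elementary operations introduced just before the statement. Writing $P_k^{(m)}=\sum_{i\leq k}(m+1-i)$ for the partial sums of $\rho_m$, the only computation needed is the identity $P_k^{(m)}=P_k^{(m-1)}+k$ together with its shifted form $P_k^{(m)}=m+P_{k-1}^{(m-1)}$. The first map $\phi(\mu)=(\mu_1+m,\mu_2,\mu_3,\dots)$ adds a full row of $m$ boxes on top; every partial sum gains $m$, and since $\mu\trianglerighteq\rho_{m-1}$ forces $\ell(\mu)=\mu_1'\leq(\rho_{m-1})_1=m-1$, one checks $\phi(\mu)\trianglerighteq\rho_m$ for all $\mu\in V(\rho_{m-1})$. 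The second map $\psi(\mu)=(m,\mu)$ prepends a new first row of length $m$ and is a legal partition exactly when $\mu_1\leq m$; there $\psi(\mu)\trianglerighteq\rho_m$ is equivalent to $\mu\trianglerighteq\rho_{m-1}$ via the shifted identity. The two images are disjoint because $\phi(\mu)$ has first part $\geq 2m-1$ while $\psi(\mu)$ has first part $m$, and $2m-1>m$ for $m\geq 2$. This yields the recursive inequality $|V(\rho_m)|\geq|V(\rho_{m-1})|+H_m$, where $H_m=|\{\mu\in V(\rho_{m-1}):\mu_1\leq m\}|$.

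The main obstacle is closing this recursion. The map $\phi$ alone gives only $|V(\rho_m)|\geq|V(\rho_{m-1})|$, so the genuine growth must come from $\psi$, i.e. from the subfamily counted by $H_m$, and I must show that $H_m$ is a fixed positive fraction of $|V(\rho_{m-1})|$ in order to reach $2^{m-1}$. I would handle this by refining the count according to the first part: the bijection $\mu\mapsto(m,\mu)$ identifies $\{\mu\in V(\rho_m):\mu_1=m\}$ with $\{\nu\in V(\rho_{m-1}):\nu_1\leq m\}$, so $H_m$ itself obeys a recursion $H_m=H_{m-1}+J_{m-1}$ with $J_{m-1}=|\{\nu\in V(\rho_{m-1}):\nu_1=m\}|$. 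A simultaneous induction on the pair $(|V(\rho_m)|,H_m)$ should then propagate a bound of the form $|V(\rho_m)|\geq 2|V(\rho_{m-1})|-O(1)$, which suffices for $|V(\rho_m)|>2^{m-1}$.

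The base of the induction is the direct computation $|V(\rho_3)|=6$ and $|V(\rho_4)|=15$, which already exceed $2^2$ and $2^3$ with ample room; correspondingly $|C(\rho_3)|=11$ and $|C(\rho_4)|=29$ surpass $2^3$ and $2^4$. Since the observed growth ratio (about $5/2$) is comfortably above $2$, the only delicate point is packaging the first-part bookkeeping into a self-sustaining induction, rather than establishing any genuinely new inequality; once $|V(\rho_m)|>2^{m-1}$ is in hand, the conjugation count gives $|C(\rho_m)|=2|V(\rho_m)|-1>2^m$, and Ikenmeyer's criterion upgrades this to $|\Phi(\rho_m)|>2^m$.
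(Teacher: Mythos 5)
Your reduction to bounding one side of $C(\rho_m)$ via conjugation, and your first map $\phi$ (adding $m$ boxes to the first row, the conjugate of appending $1^m$), match the paper's strategy. But your second map $\psi(\mu)=(m,\mu)$ creates a genuine gap: for $\mu\in V(\rho_{m-1})$ you have $\mu_1\geq m-1$, so $\psi$ is legal only on the thin subfamily with $\mu_1\in\{m-1,m\}$, and your recursion degenerates to $|V(\rho_m)|\geq |V(\rho_{m-1})|+H_m$ with $H_m\leq |V(\rho_{m-1})|$ and, for large $m$, $H_m$ far below $|V(\rho_{m-1})|$ (e.g.\ every two-row partition of $\tfrac{m(m-1)}{2}$ lies in $V(\rho_{m-1})$ and has first part roughly $m^2/4$, so it is never counted by $H_m$). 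The bound $|V(\rho_m)|\geq 2|V(\rho_{m-1})|-O(1)$ that you hope to propagate is exactly the missing inequality, not a bookkeeping issue: it would require $H_m$ to be almost all of $|V(\rho_{m-1})|$, which you neither prove nor is plausible. So as written the argument yields at best polynomial growth, not $2^{m-1}$.

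The fix is the asymmetry the paper exploits by working in the down-set instead of the up-set. Every $\lambda\in\Lambda(\rho_{m-1})$ satisfies $\lambda_1\leq m-1$, so \emph{both} maps $\lambda\mapsto(m,\lambda)$ and $\lambda\mapsto(\lambda,1^m)$ are defined on all of $\Lambda(\rho_{m-1})$, land in $\Lambda(\rho_m)$, and have disjoint images (first part $m$ versus first part at most $m-1$); hence $|\Lambda(\rho_m)|\geq 2|\Lambda(\rho_{m-1})|$ and the doubling closes immediately from the base case at $m=3$. Equivalently, on your side of the order you should replace $\psi$ by the conjugate of ``prepend a row of length $m$,'' namely adjoining a column of length $m$: $\mu\mapsto(\mu_1+1,\ldots,\mu_{m-1}+1,1)$ after padding $\mu$ with zeros to $m-1$ parts. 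Since $\ell(\mu)\leq m-1$ for all $\mu\in V(\rho_{m-1})$, this map is total, its image consists of partitions of length exactly $m$ while $\phi(\mu)$ has length at most $m-1$, and the genuine doubling $|V(\rho_m)|\geq 2|V(\rho_{m-1})|$ follows.
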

\begin{proof}
We  will show  by induction that there are $2^{m-1}$ partitions less than $\rho_m$. Then by taking transpose, we obtain another $2^{m-1}$ partitions that are greater than $\rho_m$.

There are five partitions less than $\rho_3$: (2,2,1,1), (2,2,2), (2,$1^4$),(3,$1^3$),($1^6$).
Assume that there are at least $2^{m-2}$ partitions less than $\rho_{m-1}$.   For each $\lambda\in \Lambda(\rho_{m-1})$, define two partitions by $(m,\lambda)$ and $(\lambda,1^m)$. Then it is not hard to see that they belong to $\Lambda(\rho_{m})$. Moreover, we can see that for $\lambda,~\mu\in \Lambda(\rho_{m-1})$ if $\lambda\neq\mu$, then
$(m,\lambda)$, $(\lambda,1^m)$, $(m,\mu)$ and $(\mu,1^m)$ are pairwise different.
Thus, for each partition in $\Lambda(\rho_{m-1})$ we obtain two new partitions in $\Lambda(\rho_{m})$ which are pairwise different. So by induction, there are at least $2^{m-1}$ partitions less than $\rho_{m}$.

The lower bound $|\Phi(\rho_m)|>2^{m}$ follows from Theorem 2.1 of \cite{Ikenm}.
\end{proof}

For $\lambda\in P(n)$, if $\sum_{j=1}^{i}\lambda_{j}'\geq \sum_{j=1}^{i}\lambda_{j}+i$, then $\lambda$ is said to be \emph{graphical} \cite{BP18}.
If $\lambda\trianglelefteq\lambda'$, then $\lambda$ is said to be \emph{conjugate-upward}.
Let $G(n)$ and $U(n)$ denote the set of all graphical and conjugate-upward partitions, respectively.
The following theorem gives an upper bound for $|G(n)|/|P(n)|$, which is also suitable for $|U(n)|/|P(n)|$ (see the discussions in \cite[Sect. 1]{BP18}).

\begin{theorem}\cite[Thm. 3.1]{BP18}\label{thm:ugpn}
For $G(n)$, $U(n)$ and $n$ large enough, we have
$$\frac{|U(n)|}{|P(n)|},~\frac{|G(n)|}{|P(n)|}\leq \exp\left(-\frac{0.11\log n}{\log\log n}\right).$$
\end{theorem}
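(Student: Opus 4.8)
This is quoted from \cite{BP18}; for completeness I indicate the strategy one would follow to prove it. The plan is to read the proportion as a probability: under the uniform measure on $P(n)$, the ratio $|U(n)|/|P(n)|$ is exactly the probability that a random $\lambda\vdash n$ is conjugate-upward, $\lambda\trianglelefteq\lambda'$. I would focus on this quantity, since --- as noted in the reduction of \cite[Sect.~1]{BP18} --- the graphical proportion $|G(n)|/|P(n)|$ is governed by the same estimate, and the conjugate-upward condition has the clean symmetric form $\lambda\trianglelefteq\lambda'$ that one can exploit directly.

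First I would use the conjugation involution $\lambda\mapsto\lambda'$, which preserves $P(n)$ and the uniform measure. Because dominance is a partial order, $\lambda\trianglelefteq\lambda'$ and $\lambda'\trianglelefteq\lambda$ can hold together only when $\lambda=\lambda'$, and self-conjugate partitions number $\exp(O(\sqrt n))$, hence form a vanishing fraction. Thus the identity
$$\frac{|U(n)|}{|P(n)|}=\tfrac12\,\mathbb{P}(\lambda\ \text{comparable to}\ \lambda')+\tfrac12\,\mathbb{P}(\lambda=\lambda')$$
reduces the task to bounding the probability that a uniform random partition is comparable in dominance to its own conjugate --- precisely the analogue, for the self-conjugate target $\lambda'$, of the comparability question for $\rho_m$ driving this paper.

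Next I would replace the uniform measure by the Fristedt/Boltzmann model, in which the multiplicities $a_j=\#\{i:\lambda_i=j\}$ are independent geometric variables with parameter $q^j$, $q=e^{-\pi/\sqrt{6n}}$, tuned so that $\mathbb{E}\sum_j j\,a_j=n$. A local central limit theorem transfers probabilities between the two models at the cost of only a polynomial factor, negligible against the claimed subpolynomial bound. In this model the column partial sums $\sum_{j\le i}\lambda'_j=\sum_j a_j\min(j,i)$ are linear functionals of the independent $a_j$, so the family of dominance inequalities indexed by $i$ becomes a family of sign conditions on nearly independent sums, to which moderate-deviation and maximal inequalities apply.

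The hard part will be extracting the precise subpolynomial rate $\exp(-0.11\log n/\log\log n)=n^{-0.11/\log\log n}$ with its explicit constant. A single inequality --- say at $i=1$, comparing the largest part to the number of parts, which are equidistributed with mean of order $\sqrt n\log n$ and fluctuations of order $\sqrt n$ --- holds with probability bounded below by a constant, so no fixed index produces decay; the decay must instead be harvested from the \emph{joint} requirement that all partial-difference statistics keep the correct sign across the relevant range of $i$ (up to the Durfee scale $d(\lambda)=\Theta(\sqrt n)$). I would isolate the binding window of indices near the diagonal corner where the constraints are most costly and carry out there a sharp moderate-deviation estimate, chaining the correlated increments and optimizing the deviation scale so that the survival probability of the fluctuation process collapses to $n^{-c/\log\log n}$; matching the constant $0.11$ is exactly this delicate optimization. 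Controlling the correlations between the order-statistic-based row sums and the linear column sums, and the de-conditioning back to the uniform model, are the two technical obstacles I expect to absorb most of the work.
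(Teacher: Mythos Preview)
There is nothing in this paper to compare against: Theorem~\ref{thm:ugpn} is stated purely as a citation of \cite[Thm.~3.1]{BP18} and is used as a black box in the proof of Corollary~\ref{cor:clpn}. The paper gives no argument of its own for it. You recognize this at the outset, so your write-up is not a ``proof'' in the sense the paper requires but an optional sketch of how \cite{BP18} might go.

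On the sketch itself, two remarks. First, your symmetry identity
\[
\frac{|U(n)|}{|P(n)|}=\tfrac12\,\mathbb{P}(\lambda\ \text{comparable to}\ \lambda')+\tfrac12\,\mathbb{P}(\lambda=\lambda')
\]
is correct but does not reduce anything: the right-hand side is essentially the same quantity you started with, so this step is cosmetic. Second, the heart of Pittel's argument is exactly the part you label ``the hard part'' and leave as a qualitative description (``chaining the correlated increments and optimizing the deviation scale''). Your outline correctly names the Fristedt/Boltzmann model and the transfer via a local limit theorem, and you are right that no single dominance inequality gives decay; but the mechanism that actually produces the rate $\exp(-c\log n/\log\log n)$ in \cite{BP18} is a sharp analysis of the joint distribution of the extremities $\lambda_1$ and $\ell(\lambda)$ (this is the content signalled by the title of \cite{BP18}), not a generic maximal-inequality argument over all partial sums up to the Durfee scale. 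As written, your sketch does not isolate which statistic drives the rate or why the exponent takes the specific form $\log n/\log\log n$, so it remains a plausible plan rather than a proof outline one could complete without reading \cite{BP18}.

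For the purposes of this paper, the correct thing to do is exactly what you did in your first sentence: cite \cite{BP18} and move on.
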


For $C(\lambda)$ defined in Section \ref{se:pre}, by Theorem \ref{thm:ugpn} we have the following corollary.
\begin{corollary}\label{cor:clpn}
Suppose that $\lambda\in P(n)$ is self-conjugate. Then
$$\lim_{n\to +\infty}\frac{|C(\lambda)|}{|P(n)|}=0.$$
In particular, for $\rho_m$ we have
$$\lim_{m\to +\infty}\frac{|C(\rho_m)|}{|P(\frac{m(m+1)}{2})|}=0.$$ That is, the probability that a partition is comparable to $\rho_m$ is zero as $m\to \infty$.
\end{corollary}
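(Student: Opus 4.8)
The plan is to sandwich $C(\lambda)$ between the set $U(n)$ of conjugate-upward partitions and then invoke the upper bound of Theorem \ref{thm:ugpn}. Recall from Section \ref{se:pre} that $C(\lambda)=\Lambda(\lambda)\cup V(\lambda)$, where $\Lambda(\lambda)$ is the set of partitions dominated by $\lambda$ and $V(\lambda)$ is the set of partitions dominating $\lambda$. Since $\lambda$ is self-conjugate, these two halves can each be controlled by $U(n)$.

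First I would push $\Lambda(\lambda)$ into $U(n)$. Suppose $\mu\in\Lambda(\lambda)$, so $\mu\trianglelefteq\lambda$. Applying the conjugate-reversing property of the dominance order recorded in Section \ref{se:pre} together with $\lambda=\lambda'$, we obtain $\mu'\trianglerighteq\lambda'=\lambda\trianglerighteq\mu$, whence $\mu\trianglelefteq\mu'$. Thus every $\mu\in\Lambda(\lambda)$ is conjugate-upward, i.e. $\Lambda(\lambda)\subseteq U(n)$. Next I would dispose of $V(\lambda)$ by the conjugation involution on $P(n)$: since conjugation reverses dominance, $\mu\trianglerighteq\lambda$ holds if and only if $\mu'\trianglelefteq\lambda'=\lambda$, so $\mu\mapsto\mu'$ is a bijection of $V(\lambda)$ onto $\Lambda(\lambda)$ and in particular $|V(\lambda)|=|\Lambda(\lambda)|$. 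Combining the two steps gives
$$|C(\lambda)|\leq|\Lambda(\lambda)|+|V(\lambda)|=2|\Lambda(\lambda)|\leq 2|U(n)|.$$

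Finally I would feed this into Theorem \ref{thm:ugpn}, obtaining for $n$ large enough
$$\frac{|C(\lambda)|}{|P(n)|}\leq\frac{2|U(n)|}{|P(n)|}\leq 2\exp\left(-\frac{0.11\log n}{\log\log n}\right),$$
and the right-hand side tends to $0$ as $n\to\infty$, which establishes the first limit. The stated special case then follows immediately: $\rho_m$ is self-conjugate and is a partition of $n=m(m+1)/2$, so $n\to\infty$ as $m\to\infty$, and the first part applies verbatim.

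The argument is essentially formal once the reduction to $U(n)$ is in place, so the only delicate point — and the thing I would check most carefully — is the bookkeeping of inequality directions under conjugation: one must make sure that being \emph{dominated} by the self-conjugate $\lambda$ forces a partition to be conjugate-\emph{upward} (not downward), and that the conjugation map sends $V(\lambda)$ to $\Lambda(\lambda)$ rather than to itself. Apart from this, no estimate beyond the input of Theorem \ref{thm:ugpn} is needed.
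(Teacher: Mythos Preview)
Your proof is correct and follows essentially the same route as the paper: show $\Lambda(\lambda)\subseteq U(n)$ via the conjugation-reversing property of dominance and $\lambda=\lambda'$, use the transpose bijection $V(\lambda)\leftrightarrow\Lambda(\lambda)$ to bound $|C(\lambda)|$ by $2|U(n)|$, and invoke Theorem~\ref{thm:ugpn}. The only cosmetic difference is that the paper records the exact count $|C(\lambda)|=2|\Lambda(\lambda)|-1$ (since $\lambda$ lies in both halves), whereas you use the trivial inequality $|C(\lambda)|\le 2|\Lambda(\lambda)|$; either suffices for the limit.
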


\begin{proof}
By definition we have $C(\lambda)=V(\lambda) \cup \Lambda(\lambda)$.  For any $\mu\in \Lambda(\lambda)$, we have that $\mu\trianglelefteq\lambda$ and therefore $\mu'\unrhd\lambda'$. Since $\lambda'=\lambda$, we have $\mu\unlhd\mu'$. Thus, $\Lambda(\lambda)\subseteq U(n)$ and there is a bijection between $V(\lambda)$ and $\Lambda(\lambda)$ by taking transpose .
So  we have that $|C(\lambda)|=2|\Lambda(\lambda)|-1$.

By Theorem \ref{thm:ugpn} we have $\lim_{n\to +\infty}\frac{|U(n)|}{|P(n)|}=0$.
Since $\Lambda(\lambda)\subseteq U(n)$, we  have $\lim_{n\to +\infty}\frac{|\Lambda(\lambda)|}{|P(n)|}=0$ and
$$\lim_{n\to +\infty}\frac{|C(\lambda)|}{|P(n)|}=\lim_{n\to +\infty}\frac{2|\Lambda(\lambda)|-1}{|P(n)|}=0.$$
\end{proof}

\section{Partitions with fixed Durfee sizes in tensor squares}\label{se:fixed}

In this section, for partitions whose Durfee size is $k$ where $k\geq3$,  by semigroup property, we show that there exists a number $n_k$ such that if the tensor squares of the first $n_k$ staircase partitions contain all irreducible representations corresponding to partitions with Durfee size $k$, then the tensor squares of any staircase partitions contain all partitions (of the same weight) with Durfee size $k$ (see Proposition \ref{prp:dmuk} below). Specially, we show $n_3=14$ and $n_4=28$. After that, with the help of computer, we show that all triple hooks satisfy Saxl Conjecture.
We also discuss the occurrences of hooks and double-hooks in the the tensor squares of chopped square and caret shapes.

\subsection{The number $n_k$ via semigroup property}\label{subse:upper}
\

For $\mu\in D(n,k)$, besides the first $k$ columns,  let $\mathcal{A}_i$ denote the set of columns with length $i$. Besides the first $k$ rows, let $\mathcal{B}_i$ denote the set of rows with length $i$. So we have $1\leq i\leq k$.
With these notations, we give the following definition.
\begin{definition}\label{def:alw}
Let $a_{i}$ (resp. $b_{i}$) denote the number of columns (resp. rows) in $\mathcal{A}_i$ (resp. $\mathcal{B}_i$).
The \emph{arm weight} of $\mu\in D(n,k)$ is defined as $A:=\sum_{i=1}^{k}ia_i$. The \emph{leg weight} of $\mu$ is defined as $B:=\sum_{i=1}^{k}ib_i$. If we set $A_i=ia_i$ and $B_i=ib_i$, then $A=\sum_{i=1}^kA_i$ and $B=\sum_{i=1}^kB_i$. Moreover, we have $n=k^2+A+B$.
\end{definition}

For Kronecker coefficients, we have the following property which is called the \emph{semigroup property}: if $g(\lambda,\mu,\nu)>0$ and $g(\alpha,\beta,\gamma)>0$, then $g(\lambda+\alpha,\mu+\beta,\nu+\gamma)>0$ \cite{Christ,Luo}.
Hence, the set of nonzero Kronecker coefficients is a semigroup.
\begin{definition}\label{def:abk}
Let $K$ be the set of all pairs of partitions $(\alpha,\beta)$ such that $g(\alpha,\alpha,\beta)>0$.
\end{definition}

From Definition \ref{def:abk}, we have that if $(\alpha,\beta)\in K$ and $(\lambda,\mu)\in K$, then $(\alpha+\lambda,\beta+\mu)\in K$. The following definition will be used in Subsection \ref{subse:dec10} and Subsection \ref{subsec:app}.

\begin{definition}\cite{Mac,Luo}
Let $\lambda$ and $\mu$ be two partitions. Let
$\lambda\cup\mu$ to be the partition whose parts are those of $\lambda$ and $\mu$, arranged in descending order, which is called the \emph{vertical sum} of $\lambda$ and $\mu$.
\end{definition}
For example, if $\lambda=(3,2,1)$ and $\mu = (2,2)$, then
$\lambda\cup\mu= (3,2,2,2,1)$.
In \cite[Def. 9]{Luo}, the symbol of vertical sum is `$+_V$'. For Young diagrams, we can see that $\lambda\cup\mu$ means they add together vertically.
\begin{lemma}\cite[Cor. 2.5]{Luo}\label{lem:vs}
 If $g(\alpha,\beta,\gamma)>0$ and $g(\lambda,\mu,\nu)>0$, then we have $g(\alpha\cup\lambda,\beta\cup\mu,\gamma+\nu)>0$. In particular, if $(\alpha,\beta)\in K$ and $(\lambda,\mu)\in K$, then $(\alpha\cup\lambda,\beta+\mu)\in K$.
\end{lemma}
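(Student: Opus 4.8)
The plan is to deduce this vertical-sum statement from the rowwise semigroup property recorded just above, by passing to conjugates. The whole point is that vertical sum and rowwise sum are interchanged by transposition, while the Kronecker coefficient is unchanged when two of its three arguments are conjugated; combining these two facts with the semigroup property gives the result with essentially no computation.

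First I would record the combinatorial identity that drives everything: for any partitions $\sigma$ and $\tau$,
$$(\sigma\cup\tau)'=\sigma'+\tau'.$$
This is clear from the Young diagrams, since merging the columns of $\sigma$ and $\tau$ (which is what $\cup$ does, after reordering the parts) adds the corresponding column lengths, and the column lengths of $\sigma$ and $\tau$ are exactly the parts of $\sigma'$ and $\tau'$. Transposing, this says equivalently $(\sigma'+\tau')'=\sigma\cup\tau$. Second, I would use the conjugation symmetry of Kronecker coefficients: because $[\sigma']=[\sigma]\otimes[1^n]$ and $[1^n]\otimes[1^n]$ is the trivial representation, tensoring two of the three factors by the sign representation does not change the product, so that
$$g(\sigma,\tau,\omega)=g(\sigma',\tau',\omega)$$
for all $\sigma,\tau,\omega\vdash n$.

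With these two tools the argument is immediate. Starting from $g(\alpha,\beta,\gamma)>0$ and $g(\lambda,\mu,\nu)>0$, the conjugation symmetry gives $g(\alpha',\beta',\gamma)>0$ and $g(\lambda',\mu',\nu)>0$. Applying the rowwise semigroup property to these two triples yields
$$g(\alpha'+\lambda',\ \beta'+\mu',\ \gamma+\nu)>0.$$
Conjugating the first two factors once more (again by the symmetry, now applied to the first and second slots) and invoking $(\alpha'+\lambda')'=\alpha\cup\lambda$ and $(\beta'+\mu')'=\beta\cup\mu$ gives exactly $g(\alpha\cup\lambda,\ \beta\cup\mu,\ \gamma+\nu)>0$. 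For the ``in particular'' clause I would simply feed in the two triples $(\alpha,\alpha,\beta)$ and $(\lambda,\lambda,\mu)$, which by Definition \ref{def:abk} have positive Kronecker coefficient exactly when $(\alpha,\beta),(\lambda,\mu)\in K$; the conclusion becomes $g(\alpha\cup\lambda,\ \alpha\cup\lambda,\ \beta+\mu)>0$, i.e.\ $(\alpha\cup\lambda,\beta+\mu)\in K$.

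I do not expect a genuine obstacle here: the only substantive ingredient is the transpose identity $(\sigma\cup\tau)'=\sigma'+\tau'$, which is the precise statement that $\cup$ is vertical (column) addition dual to the horizontal (row) addition $+$. The one point to handle carefully is the bookkeeping of which factors get conjugated: conjugation must be applied to exactly the same two slots each time, and the third slot---carrying $\gamma$ and $\nu$---is never conjugated, which is precisely why it is combined by rowwise sum $\gamma+\nu$ rather than by vertical sum.
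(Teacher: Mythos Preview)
Your proof is correct. The paper itself does not supply a proof of this lemma; it merely cites it as \cite[Cor.~2.5]{Luo} and moves on. So there is nothing in the paper to compare against.

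That said, the argument you give is exactly the natural one and is essentially how the result is obtained in the cited source: combine the identity $(\sigma\cup\tau)'=\sigma'+\tau'$ (this is (1.8) of Macdonald, as the paper notes right before the lemma) with the conjugation symmetry $g(\sigma,\tau,\omega)=g(\sigma',\tau',\omega)$ (recorded in the paper as Lemma~\ref{lem:conj}) and the rowwise semigroup property. Your bookkeeping is correct, including the ``in particular'' clause.
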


In the following, we let
$\tau_{m}^{i}=(m,m-1,...,m-i+1)$ where $m\geq i-1$. Let $\sigma_{m}^{i}=(i^{m-i+1},i-1,i-2,...,2,1)$ denote the conjugate of $\tau_{m}^{i}$.
\begin{definition}
Suppose that $\mu\in S(m,k)$ and $\sigma_{m}^{i},~\upsilon\vdash mi-\frac{i(i-1)}{2}$ for $m\geq i-1$. We say that $\mu$ is  $i$-\emph{decomposable} for $\upsilon$ if $(\sigma_{m}^{i},\upsilon)\in K$ and there exists a partition $\alpha$ such that $\mu=\upsilon+\alpha$, that is, $\mu-\upsilon$ is still a partition.
\end{definition}

It is well known that the Kronecker coefficients are invariant when two of its three partitions are transposed (see e. g. Lemma 2.2 and 2.3 in \cite{Luo}).
\begin{lemma}\label{lem:conj}
For  Kronecker coefficient $g(\lambda,\mu,\nu)$,  we have
 $g(\lambda,\mu,\nu)=g(\lambda',\mu,\nu')=g(\lambda,\mu',\nu')=g(\lambda',\mu',\nu)$.
In particular, we have $g(\tau_{m}^{i},\tau_{m}^{i},\upsilon)=g(\sigma_{m}^{i},\sigma_{m}^{i},\upsilon)$ where $\upsilon\vdash im-\frac{i(i-1)}{2}$.
\end{lemma}

\begin{lemma}\label{lem:either}
For $\mu\in S(m,k)$, if $\mu'$ is $i$-decomposable for  $\upsilon$ and $(\rho_{m-i},\mu'-\upsilon)\in K$, then we have
$(\rho_m,\mu)\in K$.
On the other hand, if
$\mu$ is $i$-decomposable for $\upsilon$ and $(\rho_{m-i},\mu-\upsilon)\in K$,
then we also have $(\rho_m,\mu)\in K$.
\end{lemma}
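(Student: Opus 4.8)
The plan is to reduce both statements to a single structural decomposition of the staircase together with the semigroup property, handling the conjugate case by the transposition symmetry of Lemma \ref{lem:conj}. The crux is the observation that the staircase splits as a rowwise sum
$$\rho_m=\sigma_{m}^{i}+\rho_{m-i}.$$
To see this, recall $\sigma_{m}^{i}=(i^{m-i+1},i-1,i-2,\ldots,1)$ has exactly $m$ parts, while $\rho_{m-i}=(m-i,m-i-1,\ldots,1)$ has $m-i$ parts. Padding $\rho_{m-i}$ with zeros, for $1\leq j\leq m-i$ the $j$-th part of the sum is $i+(m-i-j+1)=m-j+1$; the $(m-i+1)$-th part is $i+0=i=m-(m-i+1)+1$; and for $m-i+1<j\leq m$ the $\sigma_{m}^{i}$-part is $i-(j-m+i-1)=m-j+1$ with $\rho_{m-i}$ contributing $0$. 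In every position this equals the $j$-th part of $\rho_m$, so the identity holds.

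For the second (non-conjugate) statement I would argue directly. By hypothesis $\mu$ is $i$-decomposable for $\upsilon$, which by definition means $(\sigma_{m}^{i},\upsilon)\in K$ and that $\mu-\upsilon$ is a genuine partition; we are also given $(\rho_{m-i},\mu-\upsilon)\in K$. Applying the semigroup property to these two memberships yields
$$\bigl(\sigma_{m}^{i}+\rho_{m-i},\ \upsilon+(\mu-\upsilon)\bigr)\in K.$$
By the decomposition above the first coordinate is $\rho_m$ and the second coordinate telescopes to $\mu$, so $(\rho_m,\mu)\in K$, as required.

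For the first statement I would run the identical semigroup argument but applied to $\mu'$ in place of $\mu$: from $\mu'$ being $i$-decomposable for $\upsilon$ (so $(\sigma_{m}^{i},\upsilon)\in K$ and $\mu'-\upsilon$ a partition) together with $(\rho_{m-i},\mu'-\upsilon)\in K$, the same computation gives $(\rho_m,\mu')\in K$, i.e. $g(\rho_m,\rho_m,\mu')>0$. It then remains to pass from $\mu'$ back to $\mu$. Here I invoke Lemma \ref{lem:conj} in the form $g(\lambda,\mu,\nu)=g(\lambda',\mu,\nu')$ with the outer two partitions equal to $\rho_m$: since $\rho_m$ is self-conjugate and $(\mu')'=\mu$, we obtain $g(\rho_m,\rho_m,\mu')=g(\rho_m,\rho_m,\mu)>0$, hence $(\rho_m,\mu)\in K$. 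I do not expect a serious obstacle in either direction; the only point requiring care is verifying the rowwise decomposition $\rho_m=\sigma_{m}^{i}+\rho_{m-i}$ position-by-position, which is exactly what makes the semigroup property applicable with the building block $\sigma_{m}^{i}$, and keeping track of the self-conjugacy of $\rho_m$ so that the transposition symmetry closes the conjugate case.
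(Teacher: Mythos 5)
Your proposal is correct and follows essentially the same route as the paper: the decomposition $\rho_m=\sigma_m^i+\rho_{m-i}$, the semigroup property applied to $(\sigma_m^i,\upsilon)$ and $(\rho_{m-i},\mu-\upsilon)$ (or the primed versions), and Lemma \ref{lem:conj} with the self-conjugacy of $\rho_m$ to pass from $\mu'$ to $\mu$. The only difference is that you verify the rowwise identity $\rho_m=\sigma_m^i+\rho_{m-i}$ position by position, which the paper simply asserts.
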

\begin{proof}
Suppose that $\mu'$ is $i$-decomposable for  $\upsilon$ and $(\rho_{m-i},\mu'-\upsilon)\in K$. Then we have
$(\sigma_{m}^{i},\upsilon)\in K$. Since $\rho_m=\rho_{m-i}+\sigma_{m}^{i}$, by semigroup property we have
$$(\rho_m,\mu')=
\left(\rho_{m-i}+\sigma_{m}^{i},\mu'-\upsilon+\upsilon\right)\in K.$$
By Lemma \ref{lem:conj}, we have $g(\rho_m,\rho_m,\mu)=g(\rho_m,\rho_m',\mu')=g(\rho_m,\rho_m,\mu')$.
Thus, we have $$(\rho_m,\mu)\in K.$$

Similarly, we  have $(\rho_m,\mu)\in K$ if
$\mu$ is $i$-decomposable for $\upsilon$ and $(\rho_{m-i},\mu-\upsilon)\in K$.
\end{proof}

The following lemma generalizes Theorem 2.1 of \cite{Ikenm}.

\begin{lemma}\cite[Thm. 9.1]{Luo}\label{lem:mmn}
For partitions $\mu,\nu \vdash n $, if $\mu$ has distinct row lengths and $\mu\unlhd\nu$, then $(\mu,\nu)\in K$.
\end{lemma}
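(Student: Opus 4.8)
The plan is to prove the statement by strong induction on $n=|\mu|=|\nu|$, peeling a single part off $\mu$ at each stage and transferring the corresponding mass onto the target $\nu$ by means of the vertical-sum Lemma~\ref{lem:vs} together with the semigroup property, while keeping both the distinct-row-length hypothesis on the factor and the dominance relation $\mu\unlhd\nu$ intact. The point of routing everything through Lemma~\ref{lem:vs} is that the factor partitions are combined by vertical sum (union of parts), which is precisely the operation under which a distinct-parts partition decomposes into single rows, whereas the target is combined by ordinary rowwise sum.

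For the reduction step, write $\mu=(\mu_1,\dots,\mu_\ell)$ with $\mu_1>\cdots>\mu_\ell\geq 1$. Since the smallest part $\mu_\ell$ is strictly smaller than all the others, $\mu=\tilde\mu\cup(\mu_\ell)$ is a vertical sum, where $\tilde\mu=(\mu_1,\dots,\mu_{\ell-1})$ again has distinct row lengths. Because $(\mu_\ell)$ is a single row, $g((\mu_\ell),(\mu_\ell),(\mu_\ell))=1$, so $((\mu_\ell),(\mu_\ell))\in K$. If I can produce a partition $\tilde\nu\vdash n-\mu_\ell$ with $\nu=\tilde\nu+(\mu_\ell)$, that is, with $\tilde\nu=(\nu_1-\mu_\ell,\nu_2,\nu_3,\dots)$ still a partition, and with $\tilde\mu\unlhd\tilde\nu$, then the induction hypothesis gives $(\tilde\mu,\tilde\nu)\in K$, and Lemma~\ref{lem:vs} yields $(\mu,\nu)=(\tilde\mu\cup(\mu_\ell),\tilde\nu+(\mu_\ell))\in K$. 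The induction terminates when $\mu$ is a single row $(\mu_1)$: then $\nu\unrhd(\mu_1)$ forces $\nu=(\mu_1)$ and $g((\mu_1),(\mu_1),(\mu_1))=1>0$, so no delicate base case asserting $g(\mu,\mu,\mu)>0$ is ever required.

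A dual move is available through the conjugation symmetry of Lemma~\ref{lem:conj}: since $g(\mu,\mu,\nu)=g(\mu',\mu',\nu)$ and $\mu\unlhd\nu\iff\nu'\unlhd\mu'$, I may instead peel the first column of $\mu$, pairing the column $(1^\ell)$, whose tensor square is the trivial row $(\ell)$, against a single-row building block and reducing $\nu$ by a row of length $\ell$. Having two complementary peels lets me choose, at each stage, the one adapted to the shape of $\nu$: a bottom-row peel when $\nu$ is sufficiently wide in its first part, and a first-column peel when $\nu$ is tall.

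The main obstacle is exactly the preservation of dominance under a peel: neither $\tilde\mu\unlhd\tilde\nu$ for the row peel nor its column analogue follows automatically from $\mu\unlhd\nu$, because subtracting $\mu_\ell$ from $\nu_1$ while deleting the bottom part of $\mu$ disturbs the short partial sums $\sum_{i\le k}$ with $k<\ell$. The technical heart of the argument is therefore to show that for every $\nu\rhd\mu$ at least one of the two peels, possibly after iterating, keeps both remainders genuine partitions and satisfies all the partial-sum inequalities; when the naive single-row or single-column block is too weak to absorb the defect, I would replace it by a slightly larger self-paired block, such as a two-row partition or a hook, whose tensor squares have well-understood positive support. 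Once this combinatorial bookkeeping is settled, the semigroup assembly of the pieces is immediate and the induction closes.
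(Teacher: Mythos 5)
There is a genuine gap. The paper does not prove this lemma at all --- it is quoted verbatim from Luo and Sellke \cite[Thm.~9.1]{Luo}, whose actual argument is considerably more involved than a single-row/single-column peeling induction. Your proposal correctly sets up the induction scaffolding and correctly applies Lemma~\ref{lem:vs} once a compatible decomposition of $\nu$ is in hand, but the entire mathematical content of the theorem is concentrated in the step you defer: showing that some peel simultaneously (a) leaves $\tilde\nu$ a genuine partition and (b) preserves $\tilde\mu\unlhd\tilde\nu$. You acknowledge this is ``the technical heart'' and propose to handle failures by substituting ``a slightly larger self-paired block,'' but no such blocks are specified and no argument is given that they always exist; as written this is a restatement of the difficulty, not a resolution of it.

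Moreover, the difficulty bites already at $n=6$: take $\mu=(3,2,1)$ and $\nu=(3,3)$, so $\mu\unlhd\nu$. The bottom-row peel requires $\tilde\nu=(\nu_1-1,\nu_2)=(2,3)$, which is not a partition; the first-column peel (equivalently, working with $\mu'=\mu$ since $\rho_3$ is self-conjugate) requires $\tilde\nu$ with $(3)+\tilde\nu=(3,3)$, i.e.\ $\tilde\nu=(0,3)$, again not a partition. In fact every decomposition of $\mu=(3,2,1)$ as a vertical sum of two nonempty distinct-part pieces $\alpha\cup\lambda$ forces a non-partition remainder $\beta$ with $\beta+\eta=(3,3)$, so no application of Lemma~\ref{lem:vs} of the kind you describe can reach this target. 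Since $g(\rho_3,\rho_3,(3,3))>0$ is true (it is an instance of Ikenmeyer's theorem), the statement is not in doubt, but your induction cannot close on it without importing a genuinely different tool --- which is essentially why the paper cites \cite{Luo} rather than deriving the lemma from the semigroup machinery it develops elsewhere.
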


By Lemma \ref{lem:conj}, Corollary 1.9 of \cite{Tewari} can be reformulated as follows.
\begin{lemma}\cite[Cor. 1.9]{Tewari}\label{lem:mm-1}
For each $\mu\vdash 2m-1$, if $\ell(\mu)\leq4$, then
$(\sigma_m^2,\mu)\in K$.
\end{lemma}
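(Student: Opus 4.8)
The plan is to recognise the assertion as the transpose reformulation of the quoted result of Tewari and to deduce it in one step from the conjugation symmetry recorded in Lemma~\ref{lem:conj}. Observe first that $\tau_m^2=(m,m-1)$ is a partition of $2m-1$ whose conjugate is precisely $\sigma_m^2=(2^{m-1},1)$. The substantive input I would import from \cite[Cor.~1.9]{Tewari} is the positivity of the Kronecker coefficient $g(\tau_m^2,\tau_m^2,\mu)=g\big((m,m-1),(m,m-1),\mu\big)$ for every $\mu\vdash 2m-1$ with $\ell(\mu)\le 4$; this is the only place where real work is hidden.

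Granting that, the reformulation is immediate. Applying Lemma~\ref{lem:conj} with $\lambda=\mu=\tau_m^2$ and with third argument $\mu$, and transposing both of the first two arguments (which leaves the third argument fixed), gives
$$g(\tau_m^2,\tau_m^2,\mu)=g\big((\tau_m^2)',(\tau_m^2)',\mu\big)=g(\sigma_m^2,\sigma_m^2,\mu).$$
Thus $g(\sigma_m^2,\sigma_m^2,\mu)>0$, which by Definition~\ref{def:abk} is exactly the claim $(\sigma_m^2,\mu)\in K$. Crucially the hypothesis $\ell(\mu)\le 4$ is imposed on $\mu$, which is untouched by the transposition of the outer two partitions, so it transfers verbatim.

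Finally I would remark on why $\ell(\mu)\le 4$ is the natural hypothesis, as this pins down where the content lies. Since $[\sigma_m^2]=[\tau_m^2]\otimes[1^{2m-1}]$ and $[1^{2m-1}]\otimes[1^{2m-1}]=[2m-1]$, the two tensor squares coincide, $[\sigma_m^2]^{\otimes 2}=[\tau_m^2]^{\otimes 2}$; and a tensor square of a two-row representation has all constituents supported on at most $2\cdot 2=4$ rows. Hence $\ell(\mu)\le 4$ is a \emph{necessary} condition for positivity, and the force of Tewari's corollary is that it is also \emph{sufficient}: every partition of $2m-1$ with at most four rows actually occurs. The main obstacle is therefore entirely contained in that sufficiency statement; a self-contained proof would have to establish it, for instance from the explicit Littlewood--Richardson-type expansions available for Kronecker products of two-row partitions, or by a semigroup induction that checks no admissible $\mu$ is omitted.
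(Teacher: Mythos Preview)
Your proposal is correct and matches the paper's approach exactly: the paper introduces the lemma with the sentence ``By Lemma~\ref{lem:conj}, Corollary 1.9 of \cite{Tewari} can be reformulated as follows,'' which is precisely your argument of importing Tewari's positivity result for $g\big((m,m-1),(m,m-1),\mu\big)$ and transposing the two outer partitions via Lemma~\ref{lem:conj}. Your additional remark that $\ell(\mu)\le 4$ is also a necessary condition is correct and a nice clarification, though the paper does not include it.
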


\begin{lemma}\label{lem:m-1}
If $\mu=(m-1,m-1,m-1)$, then
$(\tau_m^3,\mu)$,  $(\sigma_m^3,\mu)\in K$.
\end{lemma}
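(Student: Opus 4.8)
The plan is to prove the single statement $(\tau_m^3,\mu)\in K$ and obtain $(\sigma_m^3,\mu)\in K$ for free. Since $\sigma_m^3=(\tau_m^3)'$ and the target $\mu=(m-1,m-1,m-1)$ is fixed, Lemma \ref{lem:conj} gives $g(\tau_m^3,\tau_m^3,\mu)=g(\sigma_m^3,\sigma_m^3,\mu)$, so the two assertions are equivalent and I may focus on showing $g(\tau_m^3,\tau_m^3,(m-1,m-1,m-1))>0$.

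My main tool will be an induction on $m$ in steps of $2$, driven by the semigroup property. The key observation is the rowwise telescoping
\[
\tau_m^3=\tau_{m-2}^3+(2,2,2),\qquad (m-1,m-1,m-1)=(m-3,m-3,m-3)+(2,2,2).
\]
Hence, once I establish the single building block $\bigl((2,2,2),(2,2,2)\bigr)\in K$, i.e.\ $g((2,2,2),(2,2,2),(2,2,2))>0$, the semigroup property immediately upgrades $(\tau_{m-2}^3,(m-3,m-3,m-3))\in K$ to $(\tau_m^3,(m-1,m-1,m-1))\in K$. This reduces every $m$ to one of the two base cases $m=2$ and $m=3$, covering both parities.

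The base cases and the building block are all small enough to handle directly. For $m=2$ one has $\tau_2^3=(2,1)$ with target $(1,1,1)$; conjugating the last two arguments via Lemma \ref{lem:conj} rewrites $g((2,1),(2,1),(1,1,1))$ as $g((2,1),(2,1),(3))$, and since $(2,1)$ has distinct rows with $(2,1)\unlhd(3)$, Lemma \ref{lem:mmn} gives positivity. For $m=3$ the same maneuver turns $g((3,2,1),(3,2,1),(2,2,2))$ into $g((3,2,1),(3,2,1),(3,3))$, where $(3,2,1)$ has distinct rows and $(3,2,1)\unlhd(3,3)$, so Lemma \ref{lem:mmn} again applies. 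The building block $g((2,2,2),(2,2,2),(2,2,2))>0$ is a single Kronecker coefficient on $6$ boxes; a short Murnaghan--Nakayama computation of $\chi^{(2,2,2)}$ (or a direct check with the computer) shows it equals $1$.

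The only genuinely new input is this building block: the cited lemmas do not produce it, because the relevant comparison $\tau_m^3\unrhd(m-1,m-1,m-1)$ points the wrong way for a direct application of Lemma \ref{lem:mmn}, and conjugating the repeated argument sends $\tau_m^3$ to $\sigma_m^3$, destroying the distinct-row hypothesis. I therefore expect that verifying this one coefficient, together with correctly arranging the conjugations in the base cases so that Lemma \ref{lem:mmn} applies, will be the only point requiring care; everything else is a mechanical unwinding of the semigroup property.
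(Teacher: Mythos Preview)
Your argument is correct. You and the paper both reduce the claim to a single ``building block'' via the semigroup property, but you telescope in steps of~$2$ rather than~$3$: the paper writes $\tau_m^3=(3s,3s,3s)+(t+2,t+1,t)$ and $\mu=(3s,3s,3s)+(t+1,t+1,t+1)$ with $m-2=3s+t$, invokes $((3,3,3),(3,3,3))\in K$ from \cite[Thm.~4.6]{PPV}, and then checks the three residual cases $((t+2,t+1,t),(t+1,t+1,t+1))\in K$ for $t\in\{0,1,2\}$ by computer. Your step-$2$ decomposition buys you a cleaner endgame: the two base cases $m=2,3$ fall directly out of Lemma~\ref{lem:mmn} after a conjugation, so the only ``external'' input you need is the single coefficient $g((2,2,2),(2,2,2),(2,2,2))=1$ (equivalently, the appearance of $[2,2,2]$ in $[3,3]\otimes[3,3]$), whereas the paper's version leans on a computer check for three small coefficients plus a cited result for the $(3,3,3)$ block. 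Structurally the two proofs are the same idea with a different modulus; yours is marginally more self-contained.
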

\begin{proof}
Let $m-2=3s+t$ where  $t\in \{0,1,2\}$.
Then
\begin{align*}
   \tau_m^3=&(m,m-1,m-2)=\left(3s+t+2,3s+t+1,3s+t\right)\\
       =&(3s,3s,3s)+(t+2,t+1,t)
\end{align*}
and
\begin{align*}
   \mu=&(m-1,m-1,m-1)=(3s+t+1,3s+t+1,3s+t+1)\\
      &=(3s,3s,3s)+(t+1,t+1,t+1).
\end{align*}
By Theorem 4.6 of  \cite{PPV}, we have
$((3,3,3),(3,3,3))\in K$. Thus, by semigroup property we have
$$\Big((3s,3s,3s),(3s,3s,3s)\Big)=\Big(s(3,3,3),s(3,3,3)\Big)\in K.$$
For $t=0,1,2$, by computer we can check that $$\Big((t+2,t+1,t),(t+1,t+1,t+1)\Big)\in K.$$
Thus by semigroup property we have
$$(\tau_m^3,\mu)=\Big((3s,3s,3s)+(t+2,t+1,t), (3s,3s,3s)+(t+1,t+1,t+1)\Big)\in K.$$
By Lemma \ref{lem:conj}, we have $g(\sigma_m^3,\sigma_m^3,\mu)=g(\tau_m^3,\tau_m^3,\mu)$ which completes the proof.
\end{proof}

\begin{lemma}\label{lem:mi}
For each $i$, if $\tau_{m}^{i}\unlhd\upsilon$, then $g(\tau_{m}^{i},\tau_{m}^{i},\upsilon)=g(\sigma_{m}^{i},\sigma_{m}^{i},\upsilon)$ and $(\tau_{m}^{i},\upsilon)$, $(\sigma_{m}^{i},\upsilon)\in K$. In particular, we have
\begin{enumerate}
\item\label{itm:mi-3}  if $m$ is odd, then
   $(\sigma_{m}^{3},\upsilon)\in K$ where $\upsilon=(\frac{3m-3}{2},\frac{3m-3}{2})$.
  \item\label{itm:mi-4} $(\sigma_{m}^{4},\upsilon)\in K$ where $\upsilon=(2m-3,2m-3)$;
  \item\label{itm:mi-6o} if $m$ is odd, then $(\sigma_{m}^{6},\upsilon)\in K$ where $\upsilon=(\frac{3m-3}{2}-2,\frac{3m-3}{2}-2,\frac{3m-3}{2}-2,
      \frac{3m-3}{2}-3)$;
  \item\label{itm:mi-8} $(\sigma_{m}^{8},\upsilon)\in K$ where $\upsilon=(2m-7,2m-7,2m-7,2m-7)$;
  \item\label{itm:mi-i} $(\sigma_{m}^{2i},\upsilon)\in K$ where $\upsilon=(2m-2i+1,2m-2i+1,...,2m-2i+1)\vdash 2im-2i^2+i$ and $m\geq 2i$.
\end{enumerate}
\end{lemma}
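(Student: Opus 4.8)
The plan is to obtain the entire lemma from the two results already recorded: Lemma~\ref{lem:mmn} (distinct row lengths plus dominance force membership in $K$) and Lemma~\ref{lem:conj} (conjugation invariance of Kronecker coefficients). The key observation is that $\tau_m^i=(m,m-1,\dots,m-i+1)$ is strictly decreasing, hence has pairwise distinct parts. So under the hypothesis $\tau_m^i\unlhd\upsilon$ I would apply Lemma~\ref{lem:mmn} with $\mu=\tau_m^i$ and $\nu=\upsilon$ to conclude $(\tau_m^i,\upsilon)\in K$, that is $g(\tau_m^i,\tau_m^i,\upsilon)>0$. Because $\sigma_m^i=(\tau_m^i)'$, the identity $g(\lambda,\mu,\nu)=g(\lambda',\mu',\nu)$ from Lemma~\ref{lem:conj} gives $g(\sigma_m^i,\sigma_m^i,\upsilon)=g(\tau_m^i,\tau_m^i,\upsilon)$; this is precisely the claimed equality, and it immediately transfers positivity to yield $(\sigma_m^i,\upsilon)\in K$. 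Thus the general statement needs no further input.

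It then remains to check, for each of items \ref{itm:mi-3}--\ref{itm:mi-i}, that the exhibited $\upsilon$ is a partition of $|\tau_m^i|=im-\binom{i}{2}$ dominating $\tau_m^i$. I would record once the partial sums $\sum_{j=1}^k(\tau_m^i)_j=km-\binom{k}{2}$ for $k\le i$, and compare them termwise with those of $\upsilon$, which in every item is a near-rectangular shape with constant rows. For item~\ref{itm:mi-i}, where $\upsilon=((2m-2i+1)^i)$, the comparison for $k\le i$ reduces after dividing by $k$ to $m\ge 2i-1-\tfrac{k-1}{2}$, weakest at $k=1$ and hence valid for $m\ge 2i-1$; for $i<k\le 2i$ the partial sum of $\upsilon$ has already reached the full total $im-\binom{i}{2}$, so dominance there is automatic. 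Combined with the requirement $m-2i+1\ge1$ that $\tau_m^{2i}$ actually have $2i$ parts, this gives the stated range $m\ge 2i$. Items \ref{itm:mi-3}--\ref{itm:mi-8} are the specializations of the same computation with the indicated $i$ and $\upsilon$, where I would also confirm that $\upsilon$ is weakly decreasing and has integer parts (the latter forcing $m$ odd in items \ref{itm:mi-3} and \ref{itm:mi-6o}).

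I do not expect a genuine obstacle: the lemma is really a packaging of Lemma~\ref{lem:mmn} with conjugation, so the only content is the elementary dominance bookkeeping. The subtlest point is purely the admissible range of $m$. The narrow shapes in items \ref{itm:mi-6o} and \ref{itm:mi-8} have their tightest constraint already at the first partial sum: the condition $m\le\frac{3m-7}{2}$ in item~\ref{itm:mi-6o} forces $m\ge7$, and $m\le 2m-7$ in item~\ref{itm:mi-8} forces $m\ge7$, sharpened to $m\ge8$ so that $\tau_m^8$ has eight parts. I would therefore state these lower bounds on $m$ explicitly rather than leave them implicit.
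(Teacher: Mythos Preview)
Your proposal is correct and follows exactly the paper's approach: the paper's entire proof is the single sentence ``It follows by Lemma~\ref{lem:mmn} and~\ref{lem:conj},'' and you have supplied the straightforward dominance verifications that the paper leaves implicit. Your observation that items~\ref{itm:mi-6o} and~\ref{itm:mi-8} implicitly require $m\ge 7$ (respectively $m\ge 8$) is accurate and worth stating, since the paper does not make these ranges explicit.
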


\begin{proof}
It follows by Lemma \ref{lem:mmn} and \ref{lem:conj}.
\end{proof}

Recall that $D\left(\frac{m(m+1)}{2},k\right)$ is abbreviated as $S(m,k)$.
The upper bound $4k^2+4k-2$ in the following proposition is not best. For $k=3,4$,  we will improve it in Proposition \ref{prop:dmu3} and Proposition \ref{prp:dmu4}.

\begin{proposition}\label{prp:dmuk}
Suppose that  $(\rho_m,\mu)\in K$ for all $m$ such that $1\leq m \leq4k^2+4k-2$ and all $\mu \in S(m,k)$.
Then for all $m\geq 4k^2+4k-1$ and $\mu \in S(m,k)$ we also have $(\rho_m,\mu)\in K$.
\end{proposition}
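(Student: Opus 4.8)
The plan is to argue by strong induction on $m$: the hypothesis supplies the base range $1\le m\le 4k^2+4k-2$, and I will reduce each $\mu\in S(m,k)$ with $m\ge 4k^2+4k-1$ to a partition in $S(m',k)$ with $m'<m$. Since $g(\rho_m,\rho_m,\mu)=g(\rho_m,\rho_m,\mu')$ by Lemma~\ref{lem:conj} and $d(\mu)=d(\mu')=k$, and since Lemma~\ref{lem:either} allows peeling from either $\mu$ or $\mu'$ (its first clause concludes $(\rho_m,\mu)\in K$ from data about $\mu'$, its second from data about $\mu$), I may assume the arm weight is at least the leg weight, $A\ge B$. As every box outside the $k\times k$ Durfee square lies in the arm or the leg, $A+B=\frac{m(m+1)}{2}-k^2$, whence $A\ge\frac12\bigl(\frac{m(m+1)}{2}-k^2\bigr)$. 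Moreover $\mu_r\le k$ for $r>k$, so the arm occupies only the first $k$ rows and $A=\sum_{r=1}^{k}(\mu_r-k)$.

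The reduction removes a wide rectangle from the top of $\mu$. For $1\le i\le k$ put $w_i=2m-2i+1$ and let $\upsilon^{(i)}=(w_i^{\,i})$ be the $i\times w_i$ rectangle. By Lemma~\ref{lem:mi}(\ref{itm:mi-i}) we have $(\sigma_m^{2i},\upsilon^{(i)})\in K$, and since $\upsilon^{(i)}\vdash 2im-2i^2+i$ one checks $\mu-\upsilon^{(i)}\vdash\frac{(m-2i)(m-2i+1)}{2}$. Call $\mu$ \emph{reducible at $i$} if $\mu-\upsilon^{(i)}$ is again a partition of Durfee size $k$; this amounts to $\mu_i\ge w_i$ together with $\mu_i-w_i\ge\mu_{i+1}$ for $i<k$, and $\mu_k-w_k\ge k$ for $i=k$. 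If $\mu$ is reducible at some $i$, then $\mu$ is $(2i)$-decomposable for $\upsilon^{(i)}$ with $\mu-\upsilon^{(i)}\in S(m-2i,k)$, so the inductive hypothesis gives $(\rho_{m-2i},\mu-\upsilon^{(i)})\in K$ and Lemma~\ref{lem:either} yields $(\rho_m,\mu)\in K$. Here $m-2i\ge 4k^2+2k-1$ and $m\ge 2k\ge 2i$, so the lemma applies; in the leg-heavy case one peels $\mu'$ and invokes instead the first clause of Lemma~\ref{lem:either}.

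It remains to prove that an arm-heavy $\mu$ is reducible at some $i\le k$ whenever $m\ge 4k^2+4k-1$, which I would do by contradiction. Writing $e=\mu_k-k$ and $d_i=\mu_i-\mu_{i+1}$ for $i<k$, the localization of the arm becomes $A=ke+\sum_{i=1}^{k-1}i\,d_i$. If $\mu$ is reducible at no $i$, then failure at $i<k$ means $\mu_i<w_i$ or $d_i<w_i$, and since $d_i\le\mu_i$ both give $d_i\le w_i-1=2(m-i)$; failure at $k$ forces $e\le w_k-1=2(m-k)$. Substituting yields $A\le 2k(m-k)+\sum_{i=1}^{k-1}2i(m-i)=:M(m,k)$. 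Comparing with $A\ge\frac12\bigl(\frac{m(m+1)}{2}-k^2\bigr)$, a short computation shows the lower bound exceeds $M(m,k)$ for $m\ge 4k^2+4k-1$, a contradiction; this is the origin of the stated threshold.

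The main work is this final estimate. The two distinct ways reducibility can fail at each $i$ must be collapsed into the single bound $d_i\le w_i-1$ via $d_i\le\mu_i$, and the terminal index $i=k$ needs the extra Durfee constraint $\mu_k-w_k\ge k$ rather than merely $\mu_k\ge w_k$. The resulting comparison between a quadratic in $m$ and the linear-in-$m$ bound $M(m,k)$ is where the constant $4k^2+4k-1$ is pinned down; as the statement already signals that this constant is not best, only a sufficient (not tight) inequality is needed, which keeps the bookkeeping manageable. Everything else — the conjugation normalization, the localization $A=\sum_{r=1}^{k}(\mu_r-k)$, and the two invocations of Lemma~\ref{lem:either} — is routine.
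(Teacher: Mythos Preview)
Your proof is correct and essentially the same as the paper's. Your quantities $d_i=\mu_i-\mu_{i+1}$ and $e=\mu_k-k$ are precisely the paper's column counts $a_i$, your ``reducibility at $i$'' coincides with the paper's condition $a_i\ge 2m-2i+1$, and the only cosmetic difference is that you normalize to $A\ge B$ and bound the arm alone, whereas the paper bounds $A+B$ directly; the resulting quadratic in $m$ and the threshold $4k^2+4k-1$ are the same.
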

\begin{proof}
With notations in Definition \ref{def:alw}, we have
\begin{equation}\label{eq:mkab}
\frac{m(m+1)}{2}=k^2+\sum_{i=1}^{k}ia_i+\sum_{i=1}^{k}ib_i.
\end{equation}

Suppose that $a_i\geq 2m-2i+1$ for some $i$. Let $\tau=(2m-2i+1,2m-2i+1,...,2m-2i+1)\vdash 2im-2i^2+i$. Then we can see that $\mu-\tau$ is still a partition. By (\ref{itm:mi-i}) of Lemma \ref{lem:mi} we have $(\sigma_{m}^{2i},\tau)\in K$. Hence, if $(\rho_{m-2i},\mu-\tau)\in K$, then by semigroup property we have $$(\rho_m,\mu)=\left(\rho_{m-2i}+\sigma_{m}^{2i},\mu-\tau+\tau\right)\in K.$$
Similarly, suppose that  $b_i\geq 2m-2i+1$ for some $i$. Then $\mu'-\tau$ is still a partition. If
$(\rho_{m-2i},\mu'-\tau)>0$, then  we have
$$(\rho_m,\mu')=\left(\rho_{m-2i}+\sigma_{m}^{2i},\mu'-\tau+\tau\right)
\in K.$$
Since $g(\rho_m,\rho_m,\mu)=g(\rho_m,\rho_m,\mu')$, we have $(\rho_m,\mu)\in K$.
Hence, by semigroup property if $a_i$ or $b_i\geq 2m-2i+1$ for some $i$, then the positivity of $g(\rho_m,\rho_m,\mu)$ can be reduced to the positivity of $g(\rho_{m-2i},\rho_{m-2i},\mu-\tau)$ or $g(\rho_{m-2i},\rho_{m-2i},\mu'-\tau)$.

Suppose that  $m\geq 4k^2+4k-1$. Then there exists some $i$ such that $a_i$ or $b_i$ $\geq 2m-2i+1$. Otherwise, for each $i$  both $a_i$ and $b_i$ are less than $2m-2i+1$. By (\ref{eq:mkab}) we have
\begin{align*}
  \frac{m(m+1)}{2} &< k^2+\sum_{i=1}^{k}i(2m-2i+1)+\sum_{i=1}^{k}i(2m-2i+1) \\
   &=k^2+2k(k+1)m-\frac{k(k+1)(4k-1)}{3}\\
   &=2k(k+1)m-\frac{4k^3-k}{3},
\end{align*}
which is equivalent to
$$m^2+m(1-4k-4k^2)+\frac{8k^3-2k}{3}<0.$$
It contradicts $m\geq 4k^2+4k-1$.

Hence, it follows from the above considerations that the positivity of $g(\rho_m,\rho_m,\mu)$ for all  $m$ such that $1\leq m \leq 4k^2+4k-2$ and all $\mu \in S(m,k)$ implies the positivity of
$g(\rho_m,\rho_m,\mu)$, where $m\geq 4k^2+4k-1$ and $\mu \in S(m,k)$.
\end{proof}

In the following, we give a proof of Corollary 6.1 in \cite{Ikenm} without using its Theorem 2.1.

\begin{proposition}\label{prp:dnu1}
For every $\nu\vdash m(m+1)/2$, if $d(\nu)=1$ (i.e. $\nu$ is a hook), then we have $(\rho_m,\nu)\in K$.
\end{proposition}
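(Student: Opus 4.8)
The plan is to induct on $m$, peeling off a single row of length $m$ from the wider arm of the hook and thereby reducing to a smaller hook attached to $\rho_{m-1}$. First I would dispose of the cases $m\le 2$ by direct inspection: the only hooks involved are $(1)$ for $m=1$ and $(3),(2,1),(1^3)$ for $m=2$, and each satisfies $(\rho_m,\nu)\in K$ since the trivial and sign representations always occur in a tensor square and $\rho_2=(2,1)$ is self-conjugate.

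For the inductive step, fix $m\ge 3$ and write a hook of $n=m(m+1)/2$ as $\nu=(a,1^b)$ with $a+b=n$, so that $\nu'=(b+1,1^{a-1})$ is again a hook. The first observation I would record is a pigeonhole bound: at least one of $a$ and $b+1$ is $\ge m+1$, since otherwise $n+1=a+(b+1)\le 2m$, contradicting $n+1=m(m+1)/2+1>2m$ for $m\ge 3$. Because $\rho_m$ is self-conjugate, Lemma \ref{lem:conj} gives $g(\rho_m,\rho_m,\nu)=g(\rho_m,\rho_m,\nu')$, so I may assume without loss of generality that it is the arm that is long, i.e. $a\ge m+1$.

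The heart of the argument is the decomposition with $i=1$. Since the sign representation squared is the trivial representation, we have $g(\sigma_m^1,\sigma_m^1,(m))=g((1^m),(1^m),(m))=1$, that is $(\sigma_m^1,(m))\in K$. As $a\ge m+1$, the rowwise difference $\nu-(m)=(a-m,1^b)$ is again a partition, indeed a hook, and it has size $n-m=(m-1)m/2=|\rho_{m-1}|$; thus $\nu$ is $1$-decomposable for $\upsilon=(m)$. By the induction hypothesis applied to the smaller hook $\nu-(m)$ we obtain $(\rho_{m-1},\nu-(m))\in K$, and then Lemma \ref{lem:either} (using $\rho_m=\rho_{m-1}+\sigma_m^1$) yields $(\rho_m,\nu)\in K$, completing the induction.

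I expect the only point requiring care to be the bookkeeping that guarantees $\nu-(m)$ remains a genuine hook of exactly the size $(m-1)m/2$: this is precisely what the pigeonhole bound $a\ge m+1$, secured after the conjugation reduction, provides. The representation-theoretic input reduces to the elementary fact $(\sigma_m^1,(m))\in K$, so no computer computation or character estimate is needed; the whole proposition follows from the semigroup property packaged in Lemma \ref{lem:either} together with this single peeling step.
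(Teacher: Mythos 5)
Your proof is correct and follows essentially the same route as the paper's: peel a row $(m)$ off the long side of the hook using $((1^m),(m))\in K$, reduce to $\rho_{m-1}$ by the semigroup property, and handle the short-armed case by conjugation, with the same pigeonhole bound forcing the base cases down to $m\le 2$. The only cosmetic difference is that the paper phrases the length condition as $a_1\ge m$ for the arm weight $a_1=\nu_1-1$, which is identical to your $a\ge m+1$.
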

\begin{proof}
Suppose that $\nu\vdash m(m+1)/2$ and $d(\nu)=1$. Then by the notations in Definition \ref{def:alw}, we have
$$\frac{m(m+1)}{2}=|\nu|=1+A+B=1+a_1+b_1.$$
Suppose that $(\rho_i,\mu)\in K$ for all $\mu$ such that $d(\mu)=1$ and all $i$ such that $1\leq i\leq m-1$. We can show that $(\rho_m,\nu)\in K$ by induction.

(1) If $a_1\geq m$, then $\tau=\nu-(m)$ is a partition of $(m-1)m/2$ which is also a hook. Then by induction we have $(\rho_{m-1},\tau)\in K$. Since $((1^m),(m))\in K$, by semigroup property we have
$$(\rho_{m-1}+(1^m),\tau+(m))=(\rho_m,\nu)\in K.$$

(2) If $b_1\geq m$, then $(\rho_m,\nu)\in K$ is equivalent to $(\rho_m,\nu')\in K$ by Lemma \ref{lem:conj}.
By the discussion in (1) above, we also have $(\rho_m,\nu')\in K$.

(3) Suppose that both $a_1$ and $b_1$ are less than $m$. Then we have
\begin{align*}
\frac{m(m+1)}{2}&=1+a_1+b_1\leq1+m-1+m-1\\
 &\leq 2m-1,
\end{align*}
which implies that $m\leq 2$. It is easily checked that  $(\rho_i,\mu)\in K$, where  $d(\mu)=1$ and $i=1,2$.
\end{proof}

Suppose that $\mu\in S(m,3)$. Let $A$ be the arm weight of $\mu$. By Definition \ref{def:alw}, we have $A=a_1+2 a_2+3a_3$ where $a_1$, $a_2$, $a_3\geq0$.
Thus, besides $a_1=a_2=a_3=0$, there are seven cases on the first three rows of $\mu$:
\begin{align}\label{eq:7cases}
&\text{Case (1):}~a_1,a_2,a_3>0;& &\text{Case (2):}~a_1=0,~a_2,a_3>0;\notag\\
&\text{Case (3):}~a_2=0,~a_1,a_3>0;& & \text{Case (4):}~a_3=0,~a_1,a_2>0;\\
&\text{Case (5):}~a_1=a_2=0,~a_3>0;& & \text{Case (6):}~a_1=a_3=0,~a_2>0;\notag\\
&\text{Case (7):}~a_2=a_3=0,~a_1>0.\notag
\end{align}
In the following, we will show the
decomposability of partitions $\mu$ in  $S(m,3)$ and $S(m,4)$ under seven cases in (\ref{eq:7cases}). By Lemma \ref{lem:mm-1}, Lemma \ref{lem:m-1} and Lemma \ref{lem:mi}, the upper bounds in Proposition \ref{prp:dmuk} can be reduced to 14 and 28 for partitions in $S(m,3)$ and $S(m,4)$, respectively. Firstly, we give a definition.

Suppose that $l\leq \frac{m(m+1)}{2}$ and $\mu\in S(m,k)$.
With notations in Definition \ref{def:alw}, if there exists $0\leq x_i\leq a_i$ such that $l=x_1+2x_2+\cdots+kx_k$, then
a partition $\upsilon\vdash l$ can be obtained from the columns of $\mu$ such that $\mu=\tau+\upsilon$ and $\ell(\upsilon)\leq k$, where  $\tau$ is another partition. In fact, we can select $x_i$ columns in $\mathcal{A}_i$ and put them together in their original order. In this way, we obtain the  partition $\upsilon\vdash l$, which is defined as $$\upsilon=\left(\sum_{i=1}^kx_i,\sum_{i=2}^kx_i,\ldots,x_k\right).$$
Specially, when $k=3$ we have $\upsilon=(x_1+x_2+x_3,x_2+x_3,x_3)$.
\begin{definition}
With notations above, we call ($x_1,x_{2},\ldots,x_k$) the \emph{select vector} (or \emph{S-vector} for short) for $\upsilon$.
\end{definition}

\begin{example} Let $m=8$ and $\mu=(14,11,8,3)\in S(8,3)$. We have $a_3=5$, $a_2=3$ and $a_1=3$. Let $x_1=2$, $x_2=2$ and $x_3=3$. We obtain the $S$-vector (2,2,3) and the corresponding partition
$\upsilon=(x_1+x_2+x_3, x_2+x_3, x_3)=(7,5,3)\vdash 2m-1=15$. Let $\tau=(7,6,5,3)$. Then we have $\mu=\tau+\upsilon=(7,6,5,3)+(7,5,3)$, which is described by Young diagrams below.
{\tiny\begin{equation*}
\Yvcentermath1
\yng(14,11,8,3)~=~\yng(7,6,5,3)~+~\yng(7,5,3)
\end{equation*}}
\end{example}

\begin{lemma}\label{lem:w2m-1}
With notations in Definition \ref{def:alw}, suppose that the arm weight $A\geq 2m-1$ for $\mu\in S(m,3)$. Then $\mu$ is $2$-decomposable for some $\upsilon$ if $\mu$ satisfies any of the following conditions:
\begin{enumerate}
\item\label{itm:w2m-1-c7} $a_2=a_3=0$, $a_1>0$;

\item\label{itm:w2m-1-c4} $a_3=0$, $a_1, a_2>0$;

\item\label{itm:w2m-1-c1} $a_1$, $a_2$, $a_3>0$;

\item\label{itm:w2m-1-A23} $0\leq A_2+A_3\leq 2m-1$;

\item\label{itm:w2m-1-c3} $a_2=0$, $a_1,a_3>0$ and $2m-1=3s+t$, where $t\in\{0, 1\}$ and $s\in \mathbb{N}$.
\end{enumerate}
\end{lemma}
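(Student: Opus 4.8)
The plan is to convert the statement into a bounded integer-representation problem and then settle each of the five conditions by writing down an explicit select vector. First I would isolate what must be produced. By the definition of $2$-decomposability I need a partition $\upsilon$ with $(\sigma_m^2,\upsilon)\in K$ for which $\mu-\upsilon$ is again a partition. The select-vector construction preceding the lemma supplies both features at once: any integers $0\le x_i\le a_i$ with $x_1+2x_2+3x_3=l$ yield $\upsilon=(x_1+x_2+x_3,\,x_2+x_3,\,x_3)\vdash l$, for which $\mu-\upsilon$ is automatically a partition and $\ell(\upsilon)\le 3$. Taking the target $l=2m-1$, Lemma \ref{lem:mm-1} gives $(\sigma_m^2,\upsilon)\in K$ immediately, since $\ell(\upsilon)\le 3\le 4$. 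Thus the whole lemma reduces to one arithmetic assertion: under each hypothesis, and using only the global budget $A=a_1+2a_2+3a_3\ge 2m-1$, there exist $0\le x_i\le a_i$ with $x_1+2x_2+3x_3=T$, where $T:=2m-1$. In short, I must represent the odd number $T$ as a bounded nonnegative combination of coins $1,2,3$ whose multiplicities are capped by $a_1,a_2,a_3$.

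Two conditions are essentially immediate. When $a_2=a_3=0$ (condition (\ref{itm:w2m-1-c7})) the budget forces $a_1\ge T$, so $(x_1,x_2,x_3)=(T,0,0)$ works. When $A_2+A_3\le T$ (condition (\ref{itm:w2m-1-A23})) I would spend every two and three and pad with ones: $x_2=a_2$, $x_3=a_3$, and $x_1=T-A_2-A_3$, which is nonnegative by hypothesis and at most $a_1$ because $T\le A$. The two two-coin conditions are handled by the same greedy idea. For $a_3=0,\ a_1,a_2>0$ (condition (\ref{itm:w2m-1-c4})) I set $x_2=\min(a_2,m-1)$; if $a_2\ge m-1$ the remainder is $x_1=1\le a_1$, and otherwise $x_1=T-2a_2$ lies in $[3,a_1]$, the upper bound coming from $T\le A$. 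For $a_2=0$ with $T=3s+t$, $t\in\{0,1\}$ (condition (\ref{itm:w2m-1-c3})) I set $x_3=\min(a_3,s)$; either $x_1=t\le 1\le a_1$, or $x_1=T-3a_3\le A-3a_3=a_1$ with $x_1\ge 3$.

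The all-positive condition (\ref{itm:w2m-1-c1}), where $a_1,a_2,a_3>0$, I would finish by a single greedy pass: put $x_3=\min(a_3,\lfloor T/3\rfloor)$ and reduce to representing the remainder $r=T-3x_3$ by ones and twos. If $x_3=\lfloor T/3\rfloor$ then $r\in\{0,1,2\}$, and each value is reachable ($r=2$ costs one two, which exists since $a_2\ge 1$); if instead $x_3=a_3$ then $r\ge 3$ and the residual budget satisfies $a_1+2a_2\ge T-3a_3=r$, so the two-coin greedy of the previous paragraph lands on $r$.

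The step I expect to demand the most care, and the one that dictates the shape of the hypotheses, is the interaction between the single global bound $A\ge T$ and the individual caps $a_i$: the budget secures enough total weight but not that this weight can be split so as to land exactly on the odd value $T$. The cleanest symptom is condition (\ref{itm:w2m-1-c3}), where the restriction $t\in\{0,1\}$ is genuinely necessary. With only ones and threes present and $a_1$ possibly equal to $1$, a target $T\equiv 2\pmod 3$ is unreachable, because $T-3x_3\equiv 2\pmod 3$ is always at least $2$; this is exactly the remainder $r=2$ that the all-positive case disposes of by spending one two. Hence Case (3) of (\ref{eq:7cases}) with $t=2$ is deliberately omitted here and must be reached by a different decomposition, for instance a $3$-decomposition supported by Lemma \ref{lem:m-1}. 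Aside from this point, the only remaining care is to confirm the few small-$m$ boundary instances, which the explicit vectors above already cover.
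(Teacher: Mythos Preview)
Your proof is correct and follows essentially the same approach as the paper: reduce to finding a select vector $(x_1,x_2,x_3)$ with $x_1+2x_2+3x_3=2m-1$ so that $\upsilon$ has length at most $3$, then invoke Lemma~\ref{lem:mm-1}. Your framing as a bounded coin-representation problem with a uniform greedy rule is a bit more compact than the paper's explicit case enumeration (summarised in Remark~\ref{rem:2m-1-c1}), but the constructions produced are the same, and your closing remark on why $t=2$ is excluded in condition~(\ref{itm:w2m-1-c3}) matches the paper's reason for deferring that subcase to the $3$-decomposition in Lemma~\ref{lem:w3m}.
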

We can see that in Lemma \ref{lem:w2m-1} conditions (\ref{itm:w2m-1-c7}), (\ref{itm:w2m-1-c4}), (\ref{itm:w2m-1-c1}) and (\ref{itm:w2m-1-c3}) correspond to Cases (7), (4), (1) and (3) in (\ref{eq:7cases}), respectively.
\begin{proof}
Suppose that $\mu\in S(m,3)$. If a partition $\upsilon\vdash 2m-1$ can be obtained from  $\mu$ such that $\ell(\upsilon)\leq3$ and $\mu=\upsilon+\tau$, then  $(\sigma_{m}^{2}, \upsilon)\in K$ by Lemma \ref{lem:mm-1} and so $\mu$ is $2$-decomposable for $\upsilon$. For five conditions above, the way to get $\upsilon$ is given below.

(1) In this condition, we can choose $2m-1$ columns in $\mathcal{A}_1$. Then we get the partition $\upsilon=(2m-1)$ with $S$-vector ($2m-1$,0, 0).

(2) In this condition, if $2a_2\geq 2m-1$, then  we can
choose $m-1$ columns in $\mathcal{A}_2$ and one column in
$\mathcal{A}_1$. Then we obtain the partition $\upsilon=(m,m-1)$ with $S$-vector (1,$m-1$,0).
If $2a_2<2m-1$, we can
choose $a_2$ columns in $\mathcal{A}_2$ and $2m-1-2a_2$ column in
$\mathcal{A}_1$. Then we obtain the partition $\upsilon=(2m-1-a_2,a_2)$ with $S$-vector ($2m-1-2a_2$, $a_2$, 0).

(3) Since $a_1$, $a_2$, $a_3>0$, we have $\mathcal{A}_1$, $\mathcal{A}_2$ and $\mathcal{A}_3$ are not empty.

\begin{itemize}
\item
Suppose that $3a_3\geq 2m-1$. Write $2m-1$ as $2m-1=3s+t$ where $s\geq0$ and $t\in \{0,1,2\}$.
\begin{itemize}
\item If $t=0$, then extract $s$ columns in $\mathcal{A}_3$ to form $\upsilon=(s,s,s)$ with  $S$-vector is (0, 0, $s$).
\item If $t=1$, then extract $s$ columns in $\mathcal{A}_3$ and one column in $\mathcal{A}_1$ to form $\upsilon=(s+1,s,s)$ with $S$-vector (1, 0, $s$).
\item If $t=2$, then extract $s$ columns in $\mathcal{A}_3$ and one column in $\mathcal{A}_2$ to form $\upsilon=(s+1,s+1,s)$ with  $S$-vector (0, 1, $s$).
\end{itemize}
\item
On the other hand, suppose that $3a_3<2m-1$.
\begin{itemize}
\item
Assume that $2a_2\geq2m-1-3a_3$.
\begin{itemize}
\item[-] If $2m-1-3a_3$ is even, take $\upsilon=(\frac{ 2m-1-a_3}{2}, \frac{2m-1-a_3}{2}, a_3)$ with $S$-vector (0, $\frac{2m-1-3a_3}{2}$, $a_3$).

\item[-]  If $2m-1-3a_3$ is odd, take $\upsilon=(\frac{2m-a_3}{2},\frac{2m-a_3-2}{2}, a_3)$ with $S$-vector (1, $\frac{2m-3a_3-2}{2}$, $a_3$).
\end{itemize}

\item
If $2a_2<2m-1-3a_3$, take
$\upsilon=(u+a_2+a_3, a_2+a_3, a_3)$ with $S$-vector ($u$, $a_2$, $a_3$), where $u=2m-1-3a_3-2a_2$.
\end{itemize}

\end{itemize}

(4) In this condition, we can let $\upsilon=(a_3+a_2+w, a_3+a_2, a_3)$ with $S$-vector $(w,a_2,a_3)$ where $w=2m-1-A_3-A_2$.
In fact, this condition has been shown in (3). Since it will be used often, we put it here separately.

(5) Assume that $t=1$. Then we have $2m-1=3s+1$.
\begin{itemize}
  \item If $A_3\geq 2m-1=3s+1$, take $\upsilon= (s+1,s,s)$ with $S$-vector (1,0,$s$).

  \item If $A_3<2m-1=3s+1$, take  $\upsilon=(a_3+w, a_3, a_3)$ with $S$-vector ($w$, $0$, $a_3$), where $w=2m-1-A_3$.
\end{itemize}
If $t=0$, then we have $2m-1=3s$. Similarly,
if $A_3\geq 2m-1=3s$,  we can let $\upsilon= (s,s,s)$ with $S$-vector (0,0,$s$). If $A_3<2m-1=3s$, we can let $\upsilon=(a_3+w, a_3, a_3)$ with $S$-vector ($w$, $0$, $a_3$), where $w=2m-1-A_3$.
\end{proof}

\begin{lemma}\label{lem:w3m}
Suppose that $\mu\in S(m,3)$. With notations in Definition \ref{def:alw}, $\mu$ is $3$-decomposable for some $\upsilon$ if $\mu$ satisfies any of the following conditions:
\begin{enumerate}
\item\label{itm:w3m-c5} $A_3\geq 3m-3$, especially, when $a_1=a_2=0$, $a_3>0$ and $A\geq 3m-3$;

\item\label{itm:w3m-c3}$a_2=0$, $a_1, a_3>0$ and $A\geq 3m-3$;

\item\label{itm:w3m-c2} $a_1=0$, $a_2, a_3>0$ and $A\geq 3m$.
\end{enumerate}
\end{lemma}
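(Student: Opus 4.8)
The plan is to prove $3$-decomposability constructively, using the select-vector machinery set up just before the statement. For each of the three conditions I will exhibit an admissible $S$-vector $(x_1,x_2,x_3)$, i.e. one with $0\le x_i\le a_i$ and $x_1+2x_2+3x_3=3m-3$, producing $\upsilon=(x_1+x_2+x_3,\,x_2+x_3,\,x_3)\vdash 3m-3$ together with a partition $\tau$ satisfying $\mu=\tau+\upsilon$ (so that $\mu-\upsilon$ is automatically a partition). It then remains to certify $(\sigma_m^3,\upsilon)\in K$, and for this I have exactly two tools: since $\tau_m^3=(m,m-1,m-2)$ has distinct parts, Lemma \ref{lem:mi} (through Lemma \ref{lem:mmn}) yields $(\sigma_m^3,\upsilon)\in K$ whenever $\upsilon\unrhd\tau_m^3$, while Lemma \ref{lem:m-1} yields it in the single exceptional shape $\upsilon=(m-1,m-1,m-1)$. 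Hence for each condition it suffices to arrange that $\upsilon$ either dominates $(m,m-1,m-2)$ or equals $(m-1,m-1,m-1)$.

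For condition \eqref{itm:w3m-c5}, the hypothesis $A_3\ge 3m-3$ is precisely $a_3\ge m-1$, so the $S$-vector $(0,0,m-1)$ is admissible and produces $\upsilon=(m-1,m-1,m-1)$; Lemma \ref{lem:m-1} then applies. When $a_1=a_2=0$ and $a_3>0$ one has $A=A_3$, so $A\ge 3m-3$ reduces to this case. Consequently, when proving \eqref{itm:w3m-c3} and \eqref{itm:w3m-c2} I may assume $a_3\le m-2$, since otherwise \eqref{itm:w3m-c5} already supplies a decomposition. For condition \eqref{itm:w3m-c3} ($a_2=0$, $a_1,a_3>0$, $A=a_1+3a_3\ge 3m-3$) there is no parity restriction on $l=x_1+3x_3$, so I take $x_3=a_3$ and $x_1=3m-3-3a_3$; the bound $x_1\le a_1$ is exactly $A\ge 3m-3$, and $a_3\le m-2$ forces $x_1\ge 3>0$. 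The resulting $\upsilon=(3m-3-2a_3,a_3,a_3)$ satisfies $\upsilon_1\ge m+1$ and $\upsilon_1+\upsilon_2\ge 2m-1$, hence $\upsilon\unrhd\tau_m^3$.

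The genuinely delicate case is condition \eqref{itm:w3m-c2} ($a_1=0$, $a_2,a_3>0$), and it is where the stronger threshold $A=2a_2+3a_3\ge 3m$ is forced upon us. Here $x_1=0$ is unavoidable, so $2x_2+3x_3=3m-3$ imposes the parity constraint $x_3\equiv m-1\pmod 2$. I would choose $x_3$ to be the largest admissible value $\le a_3$ of the correct parity, which (because $a_3\le m-2$, and $m-2\not\equiv m-1\pmod2$) is automatically $\le m-3$, and then set $x_2=(3m-3-3x_3)/2$. If $a_3\equiv m-1\pmod 2$ then $x_3=a_3$ and $x_2\le a_2$ already follows from $A\ge 3m-3$; otherwise one must drop to $x_3=a_3-1$, and then $x_2\le a_2$ holds precisely because $A\ge 3m$. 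In either subcase $x_3\le m-3$ gives $\upsilon_1=\upsilon_2=(3m-3-x_3)/2\ge m$, so $\upsilon\unrhd\tau_m^3$ and Lemma \ref{lem:mi} finishes the argument.

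The main obstacle is exactly this parity bookkeeping in condition \eqref{itm:w3m-c2}: the equation $2x_2+3x_3=3m-3$ admits only $x_3$ of one fixed parity, so when $a_3$ sits in the wrong residue class one is forced to waste a unit of $a_3$, and absorbing that loss into $x_2\le a_2$ is exactly what lifts the required arm weight from $3m-3$ up to $3m$. A secondary point demanding care is checking that $\upsilon$ still dominates $\tau_m^3$ at the extremal choice $x_3=m-3$, where $\upsilon_1$ drops to precisely $m$; this is routine once the parity choice has been made.
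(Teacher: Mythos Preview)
Your proof is correct and follows essentially the same route as the paper's: for \eqref{itm:w3m-c5} you both take the $S$-vector $(0,0,m-1)$ and invoke Lemma~\ref{lem:m-1}; for \eqref{itm:w3m-c3} you both take $(3m-3-3a_3,0,a_3)$ and check dominance over $\tau_m^3$; and for \eqref{itm:w3m-c2} your parity split on $a_3\bmod 2$ relative to $m-1$ is exactly the paper's split on the parity of $3m-3-3a_3$, leading to the same two $S$-vectors $(0,\tfrac{3m-3-3a_3}{2},a_3)$ and $(0,\tfrac{3m-3a_3}{2},a_3-1)$. Your added commentary explaining why the threshold must jump from $3m-3$ to $3m$ in \eqref{itm:w3m-c2} is a nice clarification that the paper leaves implicit.
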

We can see that in Lemma \ref{lem:w3m} conditions (\ref{itm:w3m-c5}), (\ref{itm:w3m-c3}) and (\ref{itm:w3m-c2}) correspond to Cases (5), (3) and (2) in (\ref{eq:7cases}), respectively.

\begin{proof}

(1)
Suppose that $A_3\geq 3m-3$. Then we can
choose $m-1$ columns in $\mathcal{A}_3$ and obtain the partition $\upsilon=(m-1,m-1,m-1)$ with $S$-vector (0, 0, $m-1$).
By Lemma \ref{lem:m-1}, we can see that $\mu$ is $3$-decomposable for $\upsilon$.

(2) In this condition, if $3a_3=A_3\geq 3m-3$, then by (1) we
have that $\mu$ is $3$-decomposable for $\upsilon=(m-1,m-1,m-1)$. On the other hand, suppose that $3a_3<3m-3$.
Then we can
choose $a_3$ columns in $\mathcal{A}_3$ and $3m-3-3a_3$ columns in $\mathcal{A}_1$. In this way, we obtain the partition $\upsilon=(3m-3-2a_3,a_3,a_3)$ with $S$-vector ($3m-3-3a_3, 0, a_3$). We can see that $\tau_{m}^{3}=(m,m-1,m-2)\trianglelefteq\upsilon$. By Lemma \ref{lem:mi}, we have that $\mu$ is $3$-decomposable for $\upsilon$.

(3) In this condition, if $3a_3=A_3\geq 3m-3$, then by (\ref{itm:w3m-c5}) we have $\mu$ is $3$-decomposable for $\upsilon=(m-1,m-1,m-1)$.

On the other hand, suppose that $3a_3<3m-3$. Firstly, assume that
$3m-3-3a_3$ is even. Then we can choose $a_3$ columns in $\mathcal{A}_3$ and $\frac{3m-3-3a_3}{2}$ columns in $\mathcal{A}_2$. So we obtain the partition $\upsilon=(\frac{3m-3-a_3}{2}, \frac{3m-3-a_3}{2}, a_3)$ with $S$-vector (0, $\frac{3m-3-3a_3}{2}$, $a_3$).
We can see that $\tau_{m}^{3} \trianglelefteq \upsilon$. By Lemma \ref{lem:mi}, we have $\mu$ is $3$-decomposable for $\upsilon$.

Secondly, assume that $3m-3-3a_3$ is odd. So $3m-3-3(a_3-1)=3m-3a_3$ is even. Then we can choose $a_3-1$ columns in $\mathcal{A}_3$ and $\frac{3m-3a_3}{2}$ columns in $\mathcal{A}_2$. So we obtain the partition $\upsilon=(\frac{3m-a_3}{2}-1, \frac{3m-a_3}{2}-1, a_3-1)$ with $S$-vector (0, $\frac{3m-3a_3}{2}$, $a_3-1$). We can see that $\tau_{m}^{3}\trianglelefteq \upsilon$. By Lemma \ref{lem:mi}, we have  $\mu$ is $3$-decomposable for $\upsilon$.
\end{proof}

\begin{lemma}\label{lem:dm3}
For $\mu\in S(m,3)$ and $m\geq6$, if its arm (resp. leg) weight is no less than $4m-6$, then $\mu$ (resp. $\mu'$) is $i$-decomposable for some $\upsilon$ and $i\in \{1,2,3,4\}$.
\end{lemma}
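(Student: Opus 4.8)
The plan is to reduce to the arm-weight statement and then run through the seven cases of (\ref{eq:7cases}), invoking the decomposability lemmas already in hand. First I would dispose of the leg-weight half by symmetry: since $\mu\in S(m,3)$ forces $\mu'\in S(m,3)$ and the arm weight of $\mu'$ equals the leg weight of $\mu$, the hypothesis that the leg weight of $\mu$ is at least $4m-6$ is exactly the arm-weight hypothesis applied to $\mu'$. So it suffices to prove that $A\geq 4m-6$ implies $\mu$ is $i$-decomposable for some $i\in\{1,2,3,4\}$. I would then record the numerical facts that make this threshold usable when $m\geq 6$: namely $4m-6\geq 2m-1$, $4m-6\geq 3m-3$, and $4m-6\geq 3m$, the last being the only place where $m\geq 6$ (rather than merely $m\geq 3$) is needed. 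Since $A\geq 4m-6>0$, at least one of $a_1,a_2,a_3$ is positive, so we are in one of the seven mutually exclusive cases.

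Next I would split according to which of $a_1,a_2,a_3$ vanish. Cases (1), (4) and (7) are immediate from parts \ref{itm:w2m-1-c1}, \ref{itm:w2m-1-c4} and \ref{itm:w2m-1-c7} of Lemma \ref{lem:w2m-1}, whose hypothesis $A\geq 2m-1$ is covered by $A\geq 4m-6$; in each of these $\mu$ is $2$-decomposable. Cases (3), (5) and (2) fall to parts \ref{itm:w3m-c3}, \ref{itm:w3m-c5} and \ref{itm:w3m-c2} of Lemma \ref{lem:w3m}, yielding $3$-decomposability; the first two require only $A\geq 3m-3$, while Case (2) (here $a_1=0$, $a_2,a_3>0$) requires $A\geq 3m$, which is exactly where $m\geq 6$ is used.

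The remaining Case (6), $a_1=a_3=0$ with $a_2>0$, is the crux and the reason both the value $i=4$ and the threshold $4m-6$ appear. Here every extra column has length $2$, so $A=2a_2$ and the hypothesis gives $a_2\geq 2m-3$. I would extract $2m-3$ of these columns, that is, use the $S$-vector $(0,2m-3,0)$, producing $\upsilon=(2m-3,2m-3)\vdash 4m-6$ with $\mu-\upsilon$ still a partition, and then cite part \ref{itm:mi-4} of Lemma \ref{lem:mi}, which gives $(\sigma_{m}^{4},\upsilon)\in K$; hence $\mu$ is $4$-decomposable. I expect this to be the only real obstacle, because Case (6) genuinely escapes the earlier lemmas: the arm weight is even with $A>2m-1$, so no partition $\upsilon\vdash 2m-1$ can be carved out of length-$2$ columns alone (such a $\upsilon$ would have even size), ruling out a $2$-decomposition, and Lemma \ref{lem:w3m} does not treat the pure length-$2$ configuration, so the even target $|\upsilon|=4m-6$ forces the jump to $\sigma_{m}^{4}$. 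Once Case (6) is matched to the $\sigma_{m}^{4}$ decomposition, the six remaining cases are immediate consequences of Lemmas \ref{lem:w2m-1} and \ref{lem:w3m}.
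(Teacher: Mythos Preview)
Your proposal is correct and follows essentially the same approach as the paper's proof: reduce to the arm-weight case by transposing, note that $4m-6\geq 2m-1,\,3m-3,\,3m$ for $m\geq 6$ so that Lemmas~\ref{lem:w2m-1} and~\ref{lem:w3m} dispose of Cases (1)--(5) and (7), and then handle Case~(6) by extracting $\upsilon=(2m-3,2m-3)$ and invoking Lemma~\ref{lem:mi}(\ref{itm:mi-4}). The paper's version is simply terser, collapsing your explicit case analysis into a single sentence citing the two lemmas.
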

\begin{proof}
By notations in Definition \ref{def:alw}, the arm weight of $\mu$ is $A=\sum_{i=1}^{3}ia_i$. By taking transpose, the leg weight becomes the arm weight, so we only need to show the case when $A\geq 4m-6$.

Since $4m-6\geq 2m-1$, $3m-3$ and $3m$ for $m\geq6$, by Lemma \ref{lem:w2m-1} and \ref{lem:w3m} we only need to discuss Case (6) in (\ref{eq:7cases}). Suppose that $a_1=a_3=0$ and $a_2>0$. Then $A=A_2\geq 4m-6$ and we can choose $2m-3$ columns in $\mathcal{A}_2$. In this way, we get the partition  $\upsilon=(2m-3,2m-3)$ with $S$-vector $(0,2m-3,0)$. We can see that $\tau_m^4=(m,m-1,m-2,m-3)\trianglelefteq (2m-3,2m-3)$. Thus, by (\ref{itm:mi-4}) of Lemma \ref{lem:mi} we have that $\mu$ is $4$-decomposable for $\upsilon$.
\end{proof}


\begin{proposition}\label{prop:dmu3}
Suppose that  $(\rho_m,\mu)\in K$ for all $m$ such that $1\leq m \leq 14$ and all $\mu \in S(m,3)$.
Then for all $m\geq 15$ and $\mu \in S(m,3)$, we also have $(\rho_m,\mu)\in K$.
\end{proposition}

\begin{proof}
Let $A$ (resp. $B$) be the arm (resp. leg) weight of $\mu$.
By definition we have
$$\frac{m(m+1)}{2}=3^2+A+B.$$
If $m\geq15$, then either $A$ or $B\geq 4m-6$. Otherwise, if both $A$ and $B$ are less than $4m-6$, then we have
$$\frac{m(m+1)}{2}=9+A+B\leq9+4m-7+4m-7,$$
which is equivalent to
$$m^2-15m+10\leq0.$$
It contradicts $m\geq15$.
Hence, by Lemma \ref{lem:dm3} if $m\geq15$, then either $\mu$ or $\mu'$ is $i$-decomposable for some $\upsilon$, where $i\in\{1,2,3,4\}$.

Suppose that  $(\rho_m, \mu)\in K$ for all $1\leq m \leq 14$ and $\mu \in S(m,3)$. By Lemma \ref{lem:either} and induction, we can see that for all $m\geq 15$ and $\mu \in  S(m,3)$ we also have $(\rho_m,\mu)\in K$.
\end{proof}

\begin{lemma}\label{lem:dm4}
For $\mu\in S(m,4)$ and $m\geq 11$,
if its arm (resp. leg) weight is no less than $8m-28$, then there exist some $i$ and $\upsilon$, where $i\in \{1,2,3,...,8\}$, such that $\mu$ (resp. $\mu'$) is $i$-decomposable for $\upsilon$.
\end{lemma}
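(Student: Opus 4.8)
The plan is to follow the template of Lemma~\ref{lem:dm3}, enlarged to the four arm-column lengths $1,2,3,4$ and to the decompositions $\sigma_m^i$ with $i\in\{1,\dots,8\}$. Since $g(\rho_m,\rho_m,\mu)=g(\rho_m,\rho_m,\mu')$ by Lemma~\ref{lem:conj} and conjugation interchanges arm and leg weight, it suffices to treat $A\ge 8m-28$. The number $8m-28$ is exactly $|\tau_m^8|=|\sigma_m^8|$, the size of the rectangle $(2m-7,2m-7,2m-7,2m-7)$, for which Lemma~\ref{lem:mi}(\ref{itm:mi-8}) already gives $(\sigma_m^8,(2m-7,2m-7,2m-7,2m-7))\in K$; this is the decomposition that pins the threshold.

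Recall that selecting $x_i\le a_i$ columns of length $i$ (the $S$-vector construction) yields a partition $\upsilon=(\sum_i x_i,\sum_{i\ge 2}x_i,\dots,x_4)$ with $\ell(\upsilon)\le 4$ and $\mu-\upsilon$ again a partition; so in every case the sole task is to realize some size $|\sigma_m^j|$ by a $\upsilon$ for which $(\sigma_m^j,\upsilon)\in K$ is already known. I would split according to $d=\gcd\{i:a_i>0\}$. If $d=1$, I aim for the odd target $2m-1$: any $\upsilon\vdash 2m-1$ has $\ell(\upsilon)\le 4$, so Lemma~\ref{lem:mm-1} gives $(\sigma_m^2,\upsilon)\in K$ directly, i.e. $\mu$ is $2$-decomposable, provided $2m-1=\sum_i i x_i$ is solvable with $x_i\le a_i$. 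If $S=\{3\}$, selecting $m-1$ columns of length $3$ gives $\upsilon=(m-1,m-1,m-1)$ and Lemma~\ref{lem:m-1} yields $3$-decomposability (here $A=3a_3\ge 8m-28\ge 3(m-1)$). If all occurring lengths are even, i.e. $S\subseteq\{2,4\}$, then every column-sum is even and $2m-1$ is unreachable; I instead target the even size $8m-28$, solving $2x_2+4x_4=8m-28$. This is feasible because $a_2+2a_4\ge 4m-14$, and for $m\ge 11$ every solution gives $\upsilon=(x_2+x_4,x_2+x_4,x_4,x_4)\unrhd\tau_m^8$ (the dominance constraint $x_4\le 3m-14$ being implied by the feasibility bound $x_4\le 2m-7$), so Lemma~\ref{lem:mi} yields $8$-decomposability; the pure case $S=\{4\}$ is the rectangle above.

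The step I expect to be the main obstacle is the representability of $2m-1$ in the $d=1$ cases, which can genuinely fail when a small denomination is scarce --- for example $S=\{1,4\}$ with $a_1\le 2$ and $m$ even, where $2m-1$ is unreachable modulo $4$. I would handle these by a supply dichotomy: if $2m-1$ is \emph{not} representable, then the arm weight must be concentrated in a single large pile, and the corresponding fallback applies, namely $\sigma_m^8$ when the length-$4$ columns dominate ($a_4\ge 2m-7$) and $\sigma_m^3$ when the length-$3$ columns dominate ($a_3\ge m-1$). Making this quantitative --- showing that non-representability of $2m-1$ really forces the relevant pile to be large enough, and that across all residue classes the assembled $\upsilon$ still dominates the appropriate $\tau_m^i$ --- is the crux of the proof. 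It is also where the hypotheses enter: $m\ge 11$ ensures that feasible selections in the even case automatically dominate $\tau_m^8$, and $8m-28$ is the least even arm weight from which a $\upsilon$ dominating $\tau_m^8$ can always be assembled. The leg-weight statement then follows verbatim upon replacing $\mu$ by $\mu'$.
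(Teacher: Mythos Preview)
Your plan has the right shape, but the two fallbacks you propose are not enough. Consider $m$ odd, $a_1=a_2=0$, $a_3=2$, $a_4=2m-8$. Then $A=8m-26\ge 8m-28$, so the hypothesis holds. Since $2m-1\equiv 1\pmod 4$ and $3x_3\equiv 1\pmod 4$ forces $x_3\equiv 3\pmod 4$, no selection $3x_3+4x_4=2m-1$ with $x_3\le 2$ exists, so your $\sigma_m^2$ route fails. Your $\sigma_m^3$ fallback needs $a_3\ge m-1$, which fails; your $\sigma_m^8$ fallback needs a selection $3x_3+4x_4=8m-28$, but $x_3=0$ gives $x_4=2m-7>a_4$, and $x_3\in\{1,2\}$ makes $8m-28-3x_3$ not divisible by $4$. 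So neither fallback applies, and this configuration is left uncovered. (The case $a_3=1$ is fine: then $a_4\ge 2m-7$ is forced.)

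The paper closes exactly this hole with a third fallback you do not mention, namely $\sigma_m^6$: for $m=2s+1$ it selects $x_3=1$, $x_4=3s-3$ to get $\upsilon=(3s-2,3s-2,3s-2,3s-3)\vdash 6m-15$, checks $\tau_m^6\unlhd\upsilon$, and invokes Lemma~\ref{lem:mi}(\ref{itm:mi-6o}). More broadly, the paper's organisation differs from yours: rather than splitting by $\gcd\{i:a_i>0\}$, it first asks whether $A_4=4a_4\ge 2m-1$. If not, then $a_1+2a_2+3a_3>4m-6$ and Lemma~\ref{lem:dm3} applies verbatim to the length-$\le 3$ columns; if so, it runs through the eight patterns of $(a_1,a_2,a_3)$, using $\sigma_m^2$ when enough small residues are available, $\sigma_m^6$ for the delicate $a_1=a_2=0$, $a_3\in\{1,2\}$, $m$ odd subcase above, and $\sigma_m^8$ when $S\subseteq\{1,2,4\}$ with a scarce small part. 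Your gcd split is a reasonable alternative framing, but your dichotomy ``either $a_4\ge 2m-7$ or $a_3\ge m-1$'' is simply false in the boundary regime, and you will need the $\sigma_m^6$ piece (or an equivalent) to finish.
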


\begin{proof}
By definition, the arm weight of $\mu$ is $A=\sum_{i=1}^{4}ia_i$. As in Lemma \ref{lem:dm3}, we only need to show the case when $A\geq 8m-28$.

Firstly, suppose that $A_4=4a_4\geq 2m-1$. There are 8 cases on $a_1$, $a_2$, $a_3$, which consist of 7 cases in (\ref{eq:7cases}) and $a_1=a_2=a_3=0$. We will discuss the decomposability of $\mu\in S(m,4)$ under them.
One the one hand, for $a_1$, $a_2$ and $a_3$, if at least two of them are nonzero, then we can find a $\upsilon\vdash 2m-1$ such that $\mu$ is $2$-decomposable for $\upsilon$. The discussion is given below.

\begin{enumerate}\renewcommand{\labelenumi}{(\theenumi)}
\item Suppose that $a_1$, $a_2$, $a_3>0$. For $A_4=4a_4\geq 2m-1$,
\begin{itemize}
\item if $2m-1=4s+1$, then extract $s$ columns in $\mathcal{A}_4$ and 1 column in $\mathcal{A}_1$ to form $\upsilon=(s+1,s,s,s)$ with $S$-vector $(1, 0, 0, s)$;

\item if $2m-1=4s+3$, then extract $s$ columns in $\mathcal{A}_4$ and 1 column in $\mathcal{A}_3$ to form $\upsilon=(s+1,s+1,s+1,s)$ with $S$-vector $(0, 0, 1, s)$.
\end{itemize}

\item Suppose that $a_1=0$ and $a_2$, $a_3>0$. For $A_4=4a_4\geq 2m-1$,
\begin{itemize}
  \item if $2m-1 = 4s+1 = 4(s-1)+5$, take $\upsilon=(s+1, s+1, s, s-1)$ with $S$-vector $(0, 1, 1, s-1)$;

  \item if $2m-1 = 4s+3$, take $\upsilon=(s+1, s+1, s+1, s)$ with $S$-vector $(0, 0, 1, s)$.
\end{itemize}

\item Suppose that $a_2=0$ and $a_1$, $a_3>0$.
\begin{itemize}
  \item If $2m-1=4s+1$, take $\upsilon=(s+1, s, s, s)$ with $S$-vector $(1, 0, 0, s)$.
  \item If $2m-1=4s+3$, take $\upsilon=(s+1, s+1, s+1, s)$ with $S$-vector $(0, 0, 1, s)$.
\end{itemize}

\item Suppose that $a_3=0$ and $a_1$, $a_2>0$.
\begin{itemize}
  \item If $2m-1=4s+1$, take $\upsilon=(s+1, s, s, s)$ with $S$-vector $(1, 0, 0, s)$.
  \item If $2m-1=4s+3$, take $\upsilon=(s+2, s+1, s, s)$ with $S$-vector $(1, 1, 0, s)$.
\end{itemize}
\end{enumerate}

In four cases above, since $\ell(\upsilon)=4$, by Lemma \ref{lem:mm-1} we have $(\sigma_{m}^{2}, \upsilon)\in K$.
Thus, we have $\mu$ is $2$-decomposable for $\upsilon$.
One the other hand, we  deal with the remaining four cases.

\begin{enumerate}[resume]\renewcommand{\labelenumi}{(\theenumi)} 
\item Suppose that $a_1=a_2=0$ and $a_3>0$.
\begin{itemize}
\item  Firstly, let $a_3\in\{1, 2\}$.
If  $2m-1=4s+3$, take $\upsilon=(s+1,s+1,s+1,s)$ with $S$-vector (0, 0, 1, $s$). By Lemma \ref{lem:mm-1} we have that $(\sigma_{m}^{2}, \upsilon)\in K$. Thus, $\mu$ is $2$-decomposable for $\upsilon$.
If $2m-1=4s+1$, then $m=2s+1$ is odd and $6m-15=4(3s-3)+3$. Now we show that $a_4>3s-3$.
Since $a_1=a_2=0$ and $a_3\in\{1,2\}$, we have $A=A_4+A_3$ and $A_4\geq A-6$. Since $A\geq8m-28$, we have
$$4a_4=A_4\geq A-6\geq8m-34=16s-26.$$
By the assumption $m\geq11$ and $s=\frac{m-1}{2}$, we have  $$a_4\geq \frac{16s-26}{4}>3s-3.$$
Thus, we can take $\upsilon=(3s-2,3s-2,3s-2,3s-3)\vdash 6m-15$ with $S$-vector (0, 0, 1, $3s-3$). By direct computation, $\tau_{m}^6 \unlhd (3s-2,3s-2,3s-2,3s-3)$. So by (\ref{itm:mi-6o}) of Lemma \ref{lem:mi} we have $(\sigma_{m}^{6},\upsilon)\in K$. Thus, $\mu$ is $6$-decomposable for $\upsilon$.

\item  Secondly, suppose that $a_3\geq3$.
 If $2m-1=4s+1$, then $2m-1=4(s-2)+9$. So we can take
$\upsilon=(s+1,s+1,s+1,s-2)\vdash 2m-1$ with $S$-vector (0, 0, 3, $s-2$). By  Lemma \ref{lem:mm-1} we have $(\sigma_{m}^{2},\upsilon)\in K$. Thus, we have that $\mu$ is $2$-decomposable for $\upsilon$.
Similarly, if $2m-1=4s+3$, then $\mu$ is also $2$-decomposable for $\upsilon=(s+1,s+1,s+1,s)$ with $S$-vector $(0,0,1,s)$.
\end{itemize}

\item Suppose that $a_1=a_3=0$ and $a_2>0$. In this case, we have
$A=A_2+A_4$.
\begin{itemize}
\item If $A_4\geq 4m-28$, take $\upsilon=(2m-7,2m-7,2m-7,2m-7)$ with $S$-vector $(0,0,0,2m-7)$. By (\ref{itm:mi-8}) of Lemma \ref{lem:mi} we have  $(\sigma_{m}^{8}, \upsilon)\in K$. Thus, $\mu$ is $8$-decomposable for $\upsilon$.

\item If $A_4=4a_4<4m-28$, take $\upsilon=(2m-a_4-14,2m-a_4-14,a_4,a_4)$ with $S$-vector $(0,2m-2a_4-14,0,a_4)$. Since $\tau_{m}^{8}\unlhd\upsilon$, by Lemma \ref{lem:mi} we have  $(\sigma_{m}^{8}, \upsilon)\in K$.
Thus, $\mu$ is also $8$-decomposable for $\upsilon$.
\end{itemize}

\item Suppose that  $a_2=a_3=0$ and $a_1>0$.
In this case, we have
$A=A_1+A_4$.
\begin{itemize}
\item
If $A_4\geq 4m-28$, then just as the discussion in (6), we have $\mu$ is $8$-decomposable for $\upsilon=(2m-7,2m-7,2m-7,2m-7)$.
\item
If $A_4=4a_4<4m-28$,  take $\upsilon=(4m-3a_4-28,a_4,a_4,a_4)$ with $S$-vector $(4m-4a_4-28,0,0,a_4)$. Since $\tau_{m}^{8}\unlhd\upsilon$, by Lemma \ref{lem:mi} we have  $(\sigma_{m}^{8}, \upsilon)\in K$. Thus, $\mu$ is also $8$-decomposable for $\upsilon$.
\end{itemize}

\item Suppose that  $a_1=a_2=a_3=0$. In this case, we have $A=A_4\geq 8m-28$. So we also have
 $\mu$ is $8$-decomposable for $\upsilon=(2m-7,2m-7,2m-7,2m-7)$.
\end{enumerate}

Secondly, suppose that $4a_4<2m-1$. Then for $m\geq11$ we have
\begin{align*}
3a_3+2a_2+a_1= & A-4a_4\\
>&8m-28-(2m-1) \\
>& 4m-6.
\end{align*}
Hence, besides the columns whose lengths are no less than 4, the remaining columns of $\mu$ satisfy the conditions in Lemma \ref{lem:dm3}. So by the same discussion, we have $\mu$ is $i$-decomposable for some $i\in \{1,2,3,4\}$.
\end{proof}

\begin{proposition}\label{prp:dmu4}
Suppose that  $(\rho_m,\mu)\in K$ for all $m$ such that $1\leq m \leq 28$ and all $\mu \in S(m,4)$.
Then for all $m\geq 29$ and $\mu \in S(m,4)$, we also have $(\rho_m,\mu)\in K$.
\end{proposition}

\begin{proof}
Let $A$ (resp. $B$) be the arm (resp. leg) weight of $\mu$.
By definition we have
$$\frac{m(m+1)}{2}=4^2+A+B.$$
As the proof in Proposition \ref{prop:dmu3}, we have that if $m\geq 29$, then either $A$ or $B\geq 8m-28$.
The proof is completed by Lemma \ref{lem:dm4} and similar discussions as in Proposition \ref{prop:dmu3}.
\end{proof}

\subsection{Triple hooks in tensor squares}\label{se:trip}
\

In this subsection, we will show that all triple hooks satisfy Saxl Conjecture. Firstly, we discuss the decomposability for $\mu\in S(m,3)$, where $10\leq m\leq 14$.

For $\mu\in S(m,3)$, let $A$ (resp. $B$) be the arm (resp. leg) weight of $\mu$.
By Definition \ref{def:alw} we know that
$$A+B=\frac{m(m+1)}{2}-9.$$
By taking transpose, the leg weight becomes the arm weight. So in the following we assume that $A\geq \frac{1}{2}\left(\frac{m(m+1)}{2}-9\right)$.
We will verify the decomposability under the cases in (\ref{eq:7cases}).
We can see that  most cases in (\ref{eq:7cases}) are easy to handle, except Case (2) and Case (6). When $m=10$ and 12, there are some partitions which are hard to decompose. We will treat them individually with the help of computer.

\subsubsection{\textbf{The decomposability of $S(14,3)$}}
\addcontentsline{toc}{subsection}{The decomposability of $S(14,3)$}
\

For $m=14$, since $A+B=m(m+1)/2-9=96$, we assume that $A\geq 48$. Moreover, we have $2m-1=27$, $3m-3=39$ and $4m-6=50$.  So by Lemma \ref{lem:w2m-1} and \ref{lem:w3m}, we find that only Case (6) needs to be discussed.

\textbf{{Case} (6):}
Suppose that $\mu$ satisfies Case (6). Then we have $A=A_2\geq48$. Firstly, if $A_2>48$, then $A_2\geq50=4m-6$. By (\ref{itm:mi-4}) of Lemma \ref{lem:mi} we have that $\mu$ is $4$-decomposable for $\upsilon=(25,25)$.

Secondly, let $A=A_2=48$. Then we have $B=A=48$. Now we discuss the decomposability of $\mu'$. Just as $A=48$, by Lemma \ref{lem:w2m-1} and \ref{lem:w3m}, for $\mu'$ we find that only Case (6) needs to be discussed. That is, $b_1=b_3=0$ and $b_2=24$. So by $A=A_2=48$ we have $a_1=a_3=b_1=b_3=0$ and $a_2=b_2=24$. This implies that $\mu=(27,27,3,2^{24})$. By including the third column of the Durfee square into the arm, we can see that $\mu-\upsilon=(14,14,2^{25})$ is a partition, where $\upsilon=(13,13,1)\vdash 2m-1=27$.
By Lemma \ref{lem:mm-1} we have $(\sigma_{14}^2,\upsilon)\in K$. Thus, $\mu$ is 2-decomposable for $\upsilon$.


\subsubsection{\textbf{The decomposability of $S(13,3)$}}
\addcontentsline{toc}{subsection}{The decomposability of $S(13,3)$}
\

For $m=13$, we have $A+B=m(m+1)/2-9=82$, $2m-1=25$, $3m-3=36$ and $4m-6=46$. Then we assume that $A\geq 41$. So by Lemma \ref{lem:w2m-1} and \ref{lem:w3m}, we find that only Case (6) needs to be discussed.

\textbf{{Case} (6):} Suppose that $\mu$ satisfies Case (6). Then
$A=A_2\geq 41\geq 36=3m-3$. Since $\tau_{13}^{3}\unlhd \upsilon$ where $\upsilon=(18,18)\vdash 36$, by (\ref{itm:mi-3}) of Lemma \ref{lem:mi} we have $\mu$ is $3$-decomposable for $\upsilon$.

\subsubsection{\textbf{The decomposability of $S(12,3)$}}
\addcontentsline{toc}{subsection}{The decomposability of $S(12,3)$}
\

For $m=12$, we have $A+B=m(m+1)/2-9=69$, $2m-1=23$, $3m-3=33$ and $4m-6=42$. Then we assume that $A\geq 35$. So by Lemma \ref{lem:w2m-1} and \ref{lem:w3m}, we find that Case (2) and (6) need to be discussed.

 \textbf{Case (2):}
Under Case (2), we have $A=A_2+A_3$.
\begin{itemize}
\item If  $A_2\geq 2m-1=23$, then
extract 10 columns in $\mathcal{A}_2$ and
1 column  in $\mathcal{A}_3$ to form  $\upsilon=(11,11,1)\vdash23$ with $S$-vector $(0,10,1)$. So by Lemma \ref{lem:mm-1}, we can see that $\mu$ is $2$-decomposable.

\item
If $A_3\geq 2m-1=23$, take $\upsilon=(8,8,7)\vdash23$ with $S$-vector $(0,1,7)$.  By Lemma \ref{lem:mm-1} we have that $\mu$ is also $2$-decomposable.

\item
If both $A_2< 2m-1=23$ and $A_3< 2m-1=23$, by the assumption $A\geq 35$ we have $A_2$, $A_3>12$. Thus, $A_3\in \{15,18,21\}$ and $A_2\in \{14,16,18,20,22\}$. Under this assumption, we can always choose 5 columns  in $\mathcal{A}_3$ and 4
columns  in $\mathcal{A}_2$ to form $\upsilon=(9,9,5)$ with $S$-vector $(0,4,5)$. By Lemma \ref{lem:mm-1} we have that $\mu$ is $2$-decomposable.
\end{itemize}

 \textbf{Case (6):}
Suppose that $\mu$ satisfies Case (6). Then we have $A=A_2\geq 35$. If $A_2\geq 4m-6=42$, then we can choose 21 columns in $\mathcal{A}_2$ to form $\upsilon=(21,21)\vdash42$ with $S$-vector $(0,21,0)$. Since $\tau_{12}^{4}\unlhd \upsilon$,  by (\ref{itm:mi-4}) of Lemma \ref{lem:mi} we have that $\mu$ is $4$-decomposable.

On the other hand, suppose that $42>A_2\geq 35$. Then $A=A_2\in\{36,38,40\}$ and $B\in\{33,31,29\}$.
Firstly, suppose that $b_3=0$. In this case,
we can include the third column of $\mu$ which consists of 3 boxes  into the arm such that $\mu$ is $2$-decomposable for $\upsilon=(11,11,1)$. More precisely, we can choose the third column  and other 10 columns in $\mathcal{A}_2$. Then we get the partition $\upsilon=(11,11,1)$.

Secondly, suppose that $b_3>0$ and recall that $B \in\{33,31,29\}$.
Then we discuss the decomposability of $\mu'$ under the following four conditions on $b_1$, $b_2$, $b_3$. Note that arm weight of $\mu'$ is $B$.

(C-1) $b_1,~b_2>0$, $b_3>0$; (C-2) $b_1=0$, $b_2>0$, $b_3>0$; (C-3) $b_1>0$, $b_2=0$, $b_3>0$; (C-4) $b_1=b_2=0$, $b_3>0$.

\textbf{(C-1):} Suppose that $\mu$ satisfies (C-1).  Since  $B \in\{33,31,29\}$ which is greater than $2m-1=23$,  by (\ref{itm:w2m-1-c1}) of Lemma \ref{lem:w2m-1} we have $\mu'$ is $2$-decomposable.

\begin{table}[h]
\begin{tabular}{|c|c|c|c|c|}
\hline
   $B_2$ & $B_3$ &  $(b_2,b_3)$ &$S$-vector & $\upsilon$\\
\hline
 2 & 27 & (1,9) & (0,1,7)& (8,8,7)\\
\hline
  8 & 21 & (4,7) & (0,4,5) & (9,9,5) \\
\hline
  14 & 15&  (7,5) & (0,4,5) & (9,9,5)\\
\hline
 20 & 9 &  (10,3) & (0,10,1)& (11,11,1)\\
\hline
 26 & 3 &  (13,1) & (0,10,1) & (11,11,1)\\
\hline
\end{tabular}
\caption{The case when $B=B_2+B_3=29$}\label{tab:b2+b3}
\vspace{-1.5em}
\end{table}

\textbf{(C-2):}
Suppose that $\mu$ satisfies (C-2). For each $B\in\{33,31,29\}$, if we write it as $B=3b_3+2b_2$ in all possible ways, we can see that there exist $x_3$ and $x_2$ such that $23=3x_3+2x_2$ and $0\leq x_2\leq b_2$, $0\leq x_3\leq b_3$, that is, we get the  $S$-vector $(0,x_2,x_3)$ for  $\upsilon=(x_3+x_2, x_3+x_2,x_3)\vdash2m-1=23$. Then by Lemma \ref{lem:mm-1} we
have that $\mu'$ is $2$-decomposable for $\upsilon=(x_3+x_2, x_3+x_2,x_3)$. The case when $B=B_2+B_3=29$ is given in Table \ref{tab:b2+b3}. When $B=31$ and $33$, the discussions are similar.

\textbf{(C-3):}
Suppose that $\mu$ satisfies (C-3). Firstly, suppose that  $b_1=1$. Then for $B\in\{33,31,29\}$ we should have $B=31$ and therefore $b_3=10$. Since $A_3=0$, we can include the third row of $\mu$ which consists of 3 boxes into the leg such that $\mu'$ is $3$-decomposable for $\upsilon=(11,11,11)$. In fact, we can choose the third row of $\mu$ and other 10 rows in $\mathcal{B}_3$. After taking transpose, we get the partition $\upsilon=(11,11,11)$ with $S$-vector $(0,0,11)$.

Secondly, suppose that $b_1\geq2$. Then for each $B$ if $B_3\geq 23=2m-1$,  by choosing 7 rows in $\mathcal{B}_3$ and 2 rows in $\mathcal{B}_1$ and taking transpose, we obtain the partition $\upsilon=(9,7,7)\vdash23=2m-1$ with $S$-vector $(2,0,7)$. By Lemma \ref{lem:mm-1} we have $\mu'$ is $2$-decomposable for $\upsilon=(9,7,7)$. On the other hand, suppose that $B_3<23=2m-1$ for  each $B$. Since  $B_2=0$, we have $B_2+B_3=B_3<23=2m-1$. So by (\ref{itm:w2m-1-A23}) of Lemma \ref{lem:w2m-1}, we have $\mu'$ is $2$-decomposable.

\textbf{(C-4):}
Suppose that $\mu$ satisfies (C-4). Then for $B\in\{33,31,29\}$ we should have  $B=B_3=3m-3=33$. By (\ref{itm:w3m-c5}) of Lemma \ref{lem:w3m} we have that $\mu'$ is $3$-decomposable.

\subsubsection{\textbf{The decomposability of $S(11,3)$}}
\addcontentsline{toc}{subsection}{The decomposability of $S(11,3)$}
\

For $m=11$, we have $A+B=m(m+1)/2-9=57$, $2m-1=21$, $3m-3=30$ and $4m-6=38$. Then we assume that $A\geq 29$.
So by Lemma \ref{lem:w2m-1} and \ref{lem:w3m}, we find that
Case (2),   (5) and (6) need to be discussed.

 \textbf{\textbf{Case (2)}:}
Under Case (2), we have $A=A_2+A_3$.
\begin{itemize}
\item
Firstly, if  $A_2\geq 2m-1=21$, extract 9 columns  in $\mathcal{A}_2$ and 1 column  in $\mathcal{A}_3$ to form $\upsilon=(10,10,1)\vdash21$ with $S$-vector $(0,9,1)$. Then by Lemma \ref{lem:mm-1}, we can see that $\mu$ is $2$-decomposable.

\item Secondly, if $A_3\geq 2m-1=21$, take $\upsilon=(7,7,7)\vdash21$ with $S$-vector $(0,0,7)$. By Lemma \ref{lem:mm-1}, we also have that $\mu$ is $2$-decomposable.
\item Thirdly, suppose that both $A_2<2m-1=21$ and $A_3<2m-1=21$. More precisely, it means that $A_2\leq20$ and $A_3\leq18$.  By the assumption $A_2+A_3\geq 29$ we have $A_3\geq9$ and $A_2\geq11$. Thus, $A_3\in\{9,12,15,18\}$ and $A_2\in\{12,14,16,18,20\}$. In this case, we can always extract 3 columns  in $\mathcal{A}_3$ and 6 columns  in $\mathcal{A}_2$ to form $\upsilon=(9,9,3)$ with $S$-vector $(0,6,3)$. By Lemma \ref{lem:mm-1} we have that $\mu$ is $2$-decomposable.
\end{itemize}

\textbf{\textbf{Case (5)}:}
Under Case (5), we have $A=A_3$. Since $A=A_3\geq29>21=2m-1$,
we can choose 7 columns  in $\mathcal{A}_3$ to form $\upsilon=(7,7,7)\vdash21$ with $S$-vector $(0,0,7)$. Then by Lemma \ref{lem:mm-1},  $\mu$ is $2$-decomposable.

\textbf{\textbf{Case (6)}:}
Under Case (6), we have $A=A_2 \geq29$. Since $A_2$ is even, we have $A=A_2\geq30$.
So we can choose 15 columns  in $\mathcal{A}_2$ to form $\upsilon=(15,15)\vdash30$ with $S$-vector $(0,15,0)$.
Since $\tau_{11}^{3}\unlhd(15,15)$, by (\ref{itm:mi-3}) of Lemma \ref{lem:mi} we have $(\sigma_{11}^{3},\upsilon)\in K$. So $\mu$ is $3$-decomposable.

\subsubsection{\textbf{The decomposability of $S(10,3)$}}\label{subse:dec10}
\addcontentsline{toc}{subsection}{The decomposability of $S(10,3)$}
\

For $m=10$, we have $A+B=m(m+1)/2-9=46$, $2m-1=19=3\times6+1$, $3m-3=27$ and $4m-6=34$. Then we assume that $A\geq 23$. So by Lemma \ref{lem:w2m-1} and \ref{lem:w3m}, we find that
Case (2),  (5) and  (6) need to be discussed.
Moreover, Case (2) and (6) are the most involved parts.
Under Case (2) and  (6), there exist partitions which are hard to decompose. They will be dealt with the help of computer.

\begin{table}[h]
\begin{tabular}{|c|c|c|c|}
\hline
   & Cases & $B=23$ and $A_3=21$, $A_2=2$ & $B=20$ and $A_3=24$, $A_2=2$ \\
\hline
$(1)$ & $b_1,b_2,b_3>0$ & By (\ref{itm:w2m-1-c1}) of Lemma \ref{lem:w2m-1} & By (\ref{itm:w2m-1-c1}) of Lemma \ref{lem:w2m-1}  \\
\hline
$(2)$ & $b_1=0$ and $b_2,b_3>0$ & Hard  & Hard   \\
\hline
(3) & $b_2=0$ and $b_1,b_3>0$ & By (\ref{itm:w2m-1-c3}) of Lemma \ref{lem:w2m-1} & By (\ref{itm:w2m-1-c3}) of Lemma \ref{lem:w2m-1} \\
\hline
(4)  & $b_3=0$ and $b_1,b_2>0$ & By (\ref{itm:w2m-1-c4}) of Lemma \ref{lem:w2m-1} & By (\ref{itm:w2m-1-c4}) of Lemma \ref{lem:w2m-1}  \\
\hline
(5) & $b_1=b_2=0$ and $b_3>0$ & $\emptyset$ & $\emptyset$  \\
\hline
(6) & $b_1=b_3=0$ and $b_2>0$ & $\emptyset$ & \tabincell{c}{$\mu$ is 3-decomposable\\for $\upsilon=(9,9,9)$}\\

\hline
(7) & $b_2=b_3=0$ and $b_1>0$ & By (\ref{itm:w2m-1-c7}) of Lemma \ref{lem:w2m-1} & By (\ref{itm:w2m-1-c7}) of Lemma \ref{lem:w2m-1} \\
\hline
\end{tabular}
\caption{Decomposability of $\mu'$ for $\mu\in S(10,3)$, where ``$\emptyset$'' means impossible.}\label{tab:b2b3m10}
\vspace{-1.5em}
\end{table}

 \textbf{\textbf{Case (2)}:}
Under Case (2), we have $A=A_2+A_3\geq 23$.  The discussion will be given in the following conditions:

(\romannumeral1) $A_3\geq19$ and $A_2=2$; (\romannumeral2) $A_3\geq19$ and $A_2\geq4$; (\romannumeral3) $A_2\geq19$; (\romannumeral4) $A_2, A_3<19$.
\\
\noindent\textbf{(\romannumeral1):} Suppose that $\mu$ satisfies condition (\romannumeral1).
Since $A=A_3+A_2\geq23$, we have $A_3\geq 23-A_2=21$. So we have $A_3\in \{21,24,27,...\}$. If $A_3\geq 27=3m-3$, then by (\ref{itm:w3m-c5}) of Lemma \ref{lem:w3m} we have $\mu$ is $3$-decomposable.

Now suppose that $A_3\in\{21,24\}$ and $A_2=2$. Then we have $B\in \{23,20\}$. In both conditions, we will mainly discuss the decomposability of $\mu'$ whose arm weight is $B$.  There are 7 cases on $B$ which are the same as (\ref{eq:7cases}). The discussion is summarized in  Table \ref{tab:b2b3m10}. In the following, we give an explanation of Table \ref{tab:b2b3m10}. We will find that there are 4 partitions which are hard to decompose.

In  Table \ref{tab:b2b3m10}, under Case (1), (3), (4) and (7), the decomposability of $\mu'$ can be obtained from Lemma \ref{lem:w2m-1}. For Case (2), if $B=23$, that is,  $b_1=0$, $b_2$, $b_3>0$, then there are exactly 4 pairs ($b_2$, $b_3$) such that $23=2b_2+3b_3$, which are \{$(10,1)$, $(7,3)$, $(4,5)$, $(1,7)$\}. Suppose that ($b_2$, $b_3$)=$(10,1)$. We have $2m-1=19=2x_2+3x_3$ where $(x_2,x_3)=(8,1)$. Let $(x_1,x_2,x_3)=(0,8,1)$ be the $S$-vector for $\upsilon=(x_3+x_2,x_3+x_2,x_3)=(9,9,1)$. By Lemma \ref{lem:mm-1} we have $\mu'$ is 2-decomposable for $\upsilon=(9,9,1)$.
Similarly,
if ($b_2$, $b_3$)=$(7,3)$, we  can set $(x_1,x_2,x_3)=(0,5,3)$.
If ($b_2$, $b_3$)=$(4,5)$, we can set $(x_1,x_2,x_3)=(0,2,5)$.
However, when ($b_2$, $b_3$)=$(1,7)$ we cannot find $0\leq x_2\leq b_2$, $0\leq x_3\leq b_3$ such that $2m-1=19=2x_2+3x_3$. When ($b_2$, $b_3$)=$(1,7)$, we can see that the corresponding partition is $(11,11,10,3^7,2)$.

On the other hand, suppose that $B=20$ for Case (2). Then there are only 3 pairs ($b_2$, $b_3$) such that $20=2b_2+3b_3$, which are \{$(1,6)$, $(4,4)$, $(7,2)$\}.
We can see that for these 3 pairs, we cannot find $0\leq x_2\leq b_2$, $0\leq x_3\leq b_3$ such that $2m-1=19=2x_2+3x_3$. Moreover, when ($b_2$, $b_3$)=$(1,6)$, $(4,4)$ and $(7,2)$, the corresponding partitions are $(12,12,11,3^6,2)$, $(12,12,11,3^4,2^4)$ and $(12,12,11,3^2,2^7)$, respectively.

So by discussions above, in Table \ref{tab:b2b3m10}  when  $B=23$ and 20 satisfy Case (2), we find 4 partitions which are hard to decompose:
$\mu_1=(11,11,10,3^7,2)$, $\mu_2=(12,12,11,3^6,2)$,
$\mu_3=(12,12,11,3^4,2^4)$ and $\mu_4=(12,12,11,3^2,2^7)$.
Let $\tau_7=(7,7,7)$, $\upsilon_1=(4^2,3^8,2)$, $\upsilon_2=(5^2,4,3^6,2)$, $\upsilon_3=(5^2,4,3^4,2^4)$ and $\upsilon_4=(5^2,4,3^2,2^7)$. Then we have $\mu_i=\tau_7+\upsilon_i$ ($1\leq i\leq4$).
By computer, we can verify that $(\sigma_{10}^{4},\upsilon_i)\in K$ for $1\leq i\leq4$. Thus $\mu_i$ is $4$-decomposable for $\upsilon_i$.

Suppose that $B\in \{23,20\}$. Since 23 and 20 cannot be divided by 3, Case (5) cannot happen which is denoted by ``$\emptyset$'' in Table \ref{tab:b2b3m10}. Since 23 cannot be divided by 2,  Case (6) cannot happen for  $B=23$.
For $B=20$ and $A_3=24$, $A_2=2$, if $B$ satisfies Case (6), that is, $b_1=b_3=0$ and $b_2>0$, then only one partition satisfies this condition, which is $\mu=(12^2,11,2^{10})$. Under this assumption, we discuss the decomposability of $\mu$. By direct computation, we have that $\mu$ is $3$-decomposable for $\upsilon=(9,9,9)$.

\textbf{(\romannumeral2):} Suppose that $\mu$ satisfies condition (\romannumeral2).
Since $A_3\geq19$ and $A_2\geq4$, we can extract 5 columns  in $\mathcal{A}_3$ and $2$ columns in $\mathcal{A}_2$ to form $\upsilon=(7,7,5)\vdash19=2m-1$ with $S$-vector $(0,2,5)$. By Lemma \ref{lem:mm-1} we have $\mu$ is $2$-decomposable.

\textbf{(\romannumeral3):} Suppose that $\mu$ satisfies condition (\romannumeral3).
Since $A_2\geq19$ and $A_3>0$, we can extract 1 column in $\mathcal{A}_3$ and $8$ columns in $\mathcal{A}_2$ to form $\upsilon=(9,9,1)\vdash19=2m-1$ with $S$-vector $(0,8,1)$. By Lemma \ref{lem:mm-1} we have $\mu$ is $2$-decomposable.

\textbf{(\romannumeral4):} Suppose that $\mu$ satisfies condition (\romannumeral4).
For $A_2+A_3\geq 23$, if $A_3$, $A_2<19$, we have $4<A_2, A_3<19$.
More precisely, $A_2\in \{6,8,10,12,14,16,18\}$ and
$A_3\in \{6, 9, 12, 15, 18\}$. The decomposability of $\mu$ is given below.

\textbf{(\romannumeral4-1):} If $A_3=6$, from $A_2+A_3\geq 23$ we should have $A_2=18$. So we can extract 1 column   in $\mathcal{A}_3$ and $8$ columns   in $\mathcal{A}_2$ to form $\upsilon=(9,9,1)\vdash19=2m-1$ with $S$-vector $(0,8,1)$. By Lemma \ref{lem:mm-1} we have $\mu$ is $2$-decomposable for $\upsilon$.

\textbf{(\romannumeral4-2):} If $A_3=9$, from $A_2+A_3\geq 23$ we should have $A_2\in\{14,16,18\}$. So we can extract 3 columns in $\mathcal{A}_3$ and $5$ columns  in $\mathcal{A}_2$ to form $\upsilon=(8,8,3)\vdash19=2m-1$ with $S$-vector $(0,5,3)$. By Lemma \ref{lem:mm-1} we have $\mu$ is $2$-decomposable for $\upsilon$.

\textbf{(\romannumeral4-3):} If $A_3=12$, from $A_2+A_3\geq 23$ we should have $A_2\in\{12,14,16,18\}$. The discussion is the same as \textbf{(\romannumeral4-2)}. That is, $\mu$ is also $2$-decomposable for $\upsilon=(8,8,3)$.

\textbf{(\romannumeral4-4):} If $A_3=15$, from $A_2+A_3\geq 23$ we should have $A_2\in\{8,10,12,14,16,18\}$. So we can extract 5 columns in $\mathcal{A}_3$ and $2$ columns in $\mathcal{A}_2$ to form $\upsilon=(7,7,5)\vdash19=2m-1$ with $S$-vector $(0,2,5)$. By Lemma \ref{lem:mm-1} we have $\mu$ is $2$-decomposable for $\upsilon$.

\textbf{(\romannumeral4-5):} If $A_3=18$, from $A_2+A_3\geq 23$ we should have $A_2\in\{6,8,10,12,14,16,18\}$. The discussion is the same as \textbf{(\romannumeral4-4)}.  That is, $\mu$ is also $2$-decomposable for $\upsilon=(7,7,5)$.

\textbf{\textbf{Case (5)}:}
Suppose that $\mu$ satisfies Case (5). Then $A=A_3\geq23$ which means that $A_3\in\{24, 27, 30\ldots\}$.
If $A_3\geq27=3m-3$, then by (\ref{itm:w3m-c5}) of Lemma \ref{lem:w3m} we have that $\mu$ is $3$-decomposable for $\upsilon=(9,9,9)$.
If $A=A_3=24$, then $B=46-A_3=22$. Under this assumption, we mainly discuss the decomposability of $\mu'$ whose arm weight is $B$.  The discussion is given under 7 cases as in (\ref{eq:7cases}).

\textbf{(b-1)}: Suppose that $b_1,b_2,b_3>0$. Since $B=22\geq19=2m-1$, by (\ref{itm:w2m-1-c1}) of Lemma \ref{lem:w2m-1} we have $\mu'$ is $2$-decomposable.

\textbf{(b-2)}: Suppose that $b_1=0$ and $b_2,b_3>0$.
There are 3 pairs ($b_2$, $b_3$) such that $22=2b_2+3b_3$, which are \{$(2,6)$, $(5,4)$, $(8,2)$\}. If ($b_2$, $b_3$)=$(2,6)$, we have $2m-1=19=2x_2+3x_3$ where $(x_2,x_3)=(2,5)$. So by Lemma \ref{lem:mm-1} we have $\mu'$ is 2-decomposable for $\upsilon=(x_3+x_2,x_3+x_2,x_3)=(7,7,5)$, whose $S$-vector is $(x_1,x_2,x_3)=(0,2,5)$. Similarly,
if ($b_2$, $b_3$)=$(5,4)$, we  can set $(x_1,x_2,x_3)=(0,5,3)$.
If ($b_2$, $b_3$)=$(8,2)$, we can set $(x_1,x_2,x_3)=(0,8,1)$.

\textbf{(b-3)}: Suppose that $b_2=0$ and $b_1,b_3>0$. Since $B=22\geq19=2m-1$ and $19=3\times6+1$, by (\ref{itm:w2m-1-c3}) of Lemma \ref{lem:w2m-1} we have $\mu'$ is $2$-decomposable.

\textbf{(b-4)}: Suppose that $b_3=0$ and $b_1,b_2>0$. Since $B=22\geq19=2m-1$, by (\ref{itm:w2m-1-c4}) of Lemma \ref{lem:w2m-1} we have $\mu'$ is $2$-decomposable.

\textbf{(b-5)}: Suppose that $b_1=b_2=0$ and $b_3>0$. Since $22$ cannot be divided by 3, this case cannot happen.

\textbf{(b-6)}: Suppose that $b_1=b_3=0$ and $b_2>0$. Since $A=A_3=24$, there is only one partition satisfying this case, which is $\mu=(11,11,11,2^{11})$. By direct computation, we have $\mu$ is $3$-decomposable for $\upsilon=(9,9,9)$.

\textbf{(b-7)}: Suppose that $b_2=b_3=0$ and $b_1>0$. Since $B=22\geq19=2m-1$, by (\ref{itm:w2m-1-c7}) of Lemma \ref{lem:w2m-1} we have $\mu'$ is $2$-decomposable.

\textbf{Case (6)}:
Suppose that $\mu$ satisfies Case (6). Then $A=A_2\geq23$. Firstly, if $A_2\geq 34=4m-6$, then by (\ref{itm:mi-4}) of Lemma \ref{lem:mi} we have $\mu$ is $4$-decomposable for $\upsilon=(17,17)$. Secondly, suppose that $23\leq A_2<34$. Then we have $A_2\in\{24,26,28,30,32\}$. Let
$$C_6=\{\mu\in S(10,3)| \mu~\text{satisfies Case (6) and}~23\leq A_2 < 34\}$$
denote the set of these partitions. Generally, by previous method it is hard to verify whether these partitions are $i$-decomposable or not. So in this case, we don't discuss the decomposability of $\mu$. With the help of computer,
we can show that $(\rho_{10},\mu)\in K$ for $\mu\in C_6$. In fact, let $\eta_6=(6,6,6,6,6,5)\vdash35$. Then we have $\rho_{10}=(\eta_6\cup\rho_4)+\rho_4$.
For $\mu\in C_6$, we can see that $\nu=(\mu-\tau_5)-\tau_5$ is still a partition, where $\nu\vdash35$ and $\tau_5=(5,5)$. So we have $\mu=\nu+\tau_5+\tau_5$.
By computer we can check that $(\eta_6,\nu)\in K$ for all $\nu\vdash35$.
For $\rho_4$, we have $(\rho_4,\tau_5)\in K$. Then by Lemma \ref{lem:vs} we have
$$(\eta_6\cup\rho_4,\nu+\tau_5)\in K.$$
And then we have
$$\left((\eta_6\cup\rho_4)+\rho_4,\nu+\tau_5+\tau_5\right)=(\rho_{10},\mu)\in K.$$

By now, we can see that when $11\leq m\leq 14$ and $\mu\in S(m,3)$, there exists some $i\in\{2,3,4\}$ such that $\mu$ or $\mu'$ is $i$-decomposable. However, there are some exceptions when $\mu\in S(10,3)$. Let $C_6$ denote the set of these partitions.
With the help of computer, we checked that $(\rho_{10},\mu)\in K$ for $\mu\in C_6$. Thus, the upper bound 14 in Proposition \ref{prop:dmu3} can be reduced to 9.
In \cite[Sec. 7]{Luo}, using a computer to implement the semigroup property in conjunction with Theorem 2.1 of \cite{Ikenm} on dominance ordering, Luo and Sellke verified the Saxl Conjecture up to $\rho_9$. Thus, from discussions above we have the main theorem of this paper.

\begin{theorem}\label{thm:dm3}
For each $m\in \mathbb{N}$, if $\mu \in S(m,3)$, we have $(\rho_m,\mu)\in K$.
\end{theorem}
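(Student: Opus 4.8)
The plan is to combine a finite base-case verification with the reduction machinery assembled above, so that only the staircases up to $\rho_{14}$ (in fact up to $\rho_9$) require direct treatment. First I would invoke Proposition \ref{prop:dmu3} to dispose of the infinite tail: once $(\rho_m,\mu)\in K$ is known for every $\mu\in S(m,3)$ with $1\le m\le 14$, the proposition delivers the same conclusion for all $m\ge 15$. Thus the whole theorem is reduced to checking triple hooks for the finitely many staircases $\rho_1,\dots,\rho_{14}$.

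For the base range $m\le 9$ I would cite the computer verification of Luo and Sellke \cite{Luo}, which establishes the full Saxl Conjecture, hence $(\rho_m,\nu)\in K$ for every $\nu\vdash m(m+1)/2$ and in particular for every triple hook. It then remains to treat $10\le m\le 14$, and here I would argue by induction on increasing $m$ using the decomposability results. The key mechanism is that whenever $\mu\in S(m,3)$ (or its conjugate $\mu'$) is $i$-decomposable for some $\upsilon$, Lemma \ref{lem:either} reduces $(\rho_m,\mu)\in K$ to $(\rho_{m-i},\mu-\upsilon)\in K$, where $\mu-\upsilon\vdash (m-i)(m-i+1)/2$; and since $\upsilon$ has at most three rows, the entries of $\mu$ in rows beyond the third are untouched, so $d(\mu-\upsilon)\le 3$. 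Therefore the reduced target is either a hook or a double hook, covered unconditionally by Proposition \ref{prp:dnu1} and Bessenrodt's theorem \cite{Bessenrodt17}, or else a triple hook for a strictly smaller staircase, covered by the inductive hypothesis (or directly by \cite{Luo} when $m-i\le 9$).

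The combinatorial heart of the argument, carried out case by case over the seven configurations in (\ref{eq:7cases}), is to exhibit such an $i$-decomposition, with $i\in\{2,3,4\}$, for every $\mu\in S(m,3)$ in the range $10\le m\le 14$. Lemmas \ref{lem:w2m-1}, \ref{lem:w3m} and \ref{lem:dm3} settle most configurations by producing an explicit $S$-vector for $\upsilon$; the only residual difficulties occur at $m=10$ in Case (2) and Case (6). I expect these to be the main obstacle: a small family $C_6$ of triple hooks, together with the four exceptional partitions $\mu_1,\dots,\mu_4$ arising in Case (2), resists a uniform decomposition. For them I would fall back on the vertical-sum identity $\rho_{10}=(\eta_6\cup\rho_4)+\rho_4$ with $\eta_6=(6,6,6,6,6,5)$, verifying $(\eta_6,\nu)\in K$ for all $\nu\vdash 35$ and $(\rho_4,\tau_5)\in K$ by computer, and then assembling $(\rho_{10},\mu)\in K$ via Lemma \ref{lem:vs} and the semigroup property. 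Once these exceptions are cleared, the verification for $10\le m\le 14$ is complete, the hypothesis of Proposition \ref{prop:dmu3} is met, and the theorem follows for every $m\in\mathbb{N}$.
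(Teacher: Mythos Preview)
Your overall plan mirrors the paper's exactly: reduce to $m\le 14$ via Proposition \ref{prop:dmu3}, invoke Luo--Sellke for $m\le 9$, and handle $10\le m\le 14$ by the case analysis of (\ref{eq:7cases}) together with Lemmas \ref{lem:w2m-1}, \ref{lem:w3m}, \ref{lem:dm3} and the reduction Lemma \ref{lem:either}. You also correctly identify $m=10$, Cases (2) and (6), as the only places where computer assistance enters.

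There is, however, a concrete gap in your treatment of the four exceptional partitions $\mu_1=(11,11,10,3^7,2)$, $\mu_2=(12,12,11,3^6,2)$, $\mu_3=(12,12,11,3^4,2^4)$, $\mu_4=(12,12,11,3^2,2^7)$ at $m=10$. You propose to handle them by the same vertical-sum trick used for $C_6$, writing $\mu=\nu+\tau_5+\tau_5$ with $\tau_5=(5,5)$ and $\nu\vdash 35$. But this requires $\mu-(10,10)$ to be a partition, and for each $\mu_i$ the third part (which is $10$ or $11$) exceeds the first two parts after subtraction: e.g.\ $\mu_1-(10,10)=(1,1,10,3^7,2)$ is not a partition. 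So the $\eta_6$ device simply does not apply to these four. The paper instead shows each $\mu_i$ is $4$-decomposable: one writes $\mu_i=\tau_7+\upsilon_i$ with $\tau_7=(7,7,7)$ and verifies by computer that $(\sigma_{10}^{4},\upsilon_i)\in K$ for the specific $\upsilon_i\vdash 34$. Once you replace your proposed argument for $\mu_1,\dots,\mu_4$ with this $4$-decomposition check, the proof goes through as you outlined.
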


\subsection{Applications to chopped square and caret shapes}\label{subsec:app}
\

In this part, we apply our technique for staircase shapes to chopped square and caret shapes, which were defined in \cite{PPV}.
Similar results are obtained for the occurrences of hooks and double-hooks.

Let
$$\eta_k=(k^{k-1}, k-1) \vdash n$$
where $n=k^2-1$.   Let
$$\gamma_k=(3k-1, 3k-3,...,k+3, k+1, k, k-1, k-1, k-2, k-2,..,2,2,1,1) \vdash n$$
where $n=3k^2$. In \cite{PPV},  $\eta_k$ and $\gamma_k$ are called \emph{the chopped square shape of order $k$} and \emph{the caret shape of order $k$}, respectively. It has been shown that hooks and two row shapes appear in $[\eta_k]\otimes [\eta_k]$ and $[\gamma_k]\otimes[\gamma_k]$ for sufficient large $k$ \cite{PPV}.

In the following, we discuss the occurrences of hooks in $[\eta_k]\otimes [\eta_k]$ and $[\gamma_k]\otimes[\gamma_k]$.

\begin{proposition}\label{prp:csdnu1}
For every $\nu\vdash k^2-1$, if $d(\nu)=1$ (i.e. $\nu$ is a hook), then we have $(\eta_k,\nu)\in K$.
\end{proposition}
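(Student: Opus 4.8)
The plan is to imitate the proof of Proposition \ref{prp:dnu1}, exploiting that $\eta_k=(k^{k-1},k-1)$ is self-conjugate and is exactly the $k\times k$ square with one corner box deleted. I would argue by induction on $k$, the inductive hypothesis being that $(\eta_{k-1},\gamma)\in K$ for \emph{every} hook $\gamma\vdash(k-1)^2-1$. The base cases are $k=2$, where $\eta_2=(2,1)=\rho_2$ is covered by Proposition \ref{prp:dnu1} (or by a direct computation in $S_3$), and $k=3$, which I treat at the end.

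The engine is the identity $\eta_k=\bigl(\eta_{k-1}+(1^{k-1})\bigr)\cup(k)$, checked directly on Young diagrams: adjoining a column $(1^{k-1})$ to $\eta_{k-1}$ gives $(k^{k-2},k-1)$, and vertically adjoining one more row of length $k$ gives $\eta_k$. First I would feed the inductive hypothesis together with $((1^{k-1}),(k-1))\in K$ (sign $\otimes$ sign $=$ trivial) into the semigroup property to obtain $(\eta_{k-1}+(1^{k-1}),\gamma+(k-1))\in K$; then, using $((k),(k))\in K$ and the vertical-sum Lemma \ref{lem:vs}, I would deduce
$$(\eta_k,\gamma+(2k-1))\in K,$$
where $\gamma+(2k-1)$ is the hook obtained from $\gamma$ by enlarging its first part by $2k-1$. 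As $\gamma$ ranges over all hooks of $(k-1)^2-1$, this yields precisely the hooks $\nu\vdash k^2-1$ with first part $\nu_1\ge 2k$.

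Since $\eta_k$ is self-conjugate, Lemma \ref{lem:conj} gives $(\eta_k,\nu)\in K\iff(\eta_k,\nu')\in K$, and the conjugate of a hook is again a hook; conjugating the family just produced covers all hooks with $\nu_1\le k^2-2k$. The only hooks left uncovered are those with $k^2-2k<\nu_1<2k$, and this range of integers is empty exactly when $k^2-4k+2>0$, i.e. for $k\ge4$. Hence for $k\ge4$ the induction closes with no residual cases.

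The main obstacle is precisely this middle gap. It arises because the build-up above can only lengthen the \emph{arm} of a hook: the leg of $\nu$ is inherited unchanged from $\gamma$ and is therefore capped by the size of $\eta_{k-1}$, so arm-building combined with conjugate symmetry cannot by itself reach the balanced hooks when $k$ is small. Fortunately the gap is empty for $k\ge4$, so only $k=3$ needs extra work, where the two missing hooks are $(5,1^3)$ and $(4,1^4)$. I would dispatch these using the self-conjugate Durfee square $(2,2)$ via the alternative splitting $\eta_3=\bigl((2,2)+(1^2)\bigr)\cup(2)$: because $(2,2)$ is self-conjugate we have $((2,2),(1^4))\in K$, and pushing $(1^4)$ through the two steps (adding $2$ then $2$ to the first part) gives $(\eta_3,(5,1^3))\in K$, with $(4,1^4)$ following by conjugation. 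Alternatively, all hooks for $k\le3$ can simply be verified by computer, in the spirit of the rest of the paper.
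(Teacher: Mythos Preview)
Your proof is correct and follows essentially the same route as the paper's: the same decomposition $\eta_k=\bigl(\eta_{k-1}+(1^{k-1})\bigr)\cup(k)$, the same two-step application of the semigroup property and Lemma~\ref{lem:vs}, and the same reduction via self-conjugacy to the range where both arm and leg are short, which forces $k\le 3$. The only difference is in the base cases: the paper simply cites \cite[Sect.~3.4]{PPV} for $k\le 3$, whereas you work out $k=3$ explicitly via the auxiliary splitting $\eta_3=\bigl((2,2)+(1^2)\bigr)\cup(2)$, which is a nice self-contained touch.
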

\begin{proof}
Suppose that $\nu\vdash k^2-1$ and $d(\nu)=1$. Then by the notations in Definition \ref{def:alw}, we have
$$k^2-1=|\nu|=1+A+B=1+a_1+b_1.$$
Suppose that $(\eta_i,\mu)\in K$ for all $d(\mu)=1$ and $1\leq i\leq k-1$. We can show that $(\eta_k,\nu)\in K$ by induction.

If  $A\geq2k-1$, then  $\nu-(2k-1)$ is still a partition.
Suppose that $(\eta_{k-1},\nu-(2k-1))\in K$. Then  by
$\left((1^{k-1}),(k-1)\right)\in K$ we have
$$\left(\eta_{k-1}+(1^{k-1}),\nu-(2k-1)+(k-1)\right)\in K.$$
Let $\theta_{k-1}=\eta_{k-1}+(1^{k-1})$ and $\nu_{k-1}=\nu-(2k-1)+(k-1)$. Then by $\left((k),(k)\right)\in K$ and Lemma \ref{lem:vs} we have
$$\left(\theta_{k-1}\cup(k),
\nu_{k-1}+(k)\right)=(\eta_{k},\nu)\in K.$$
Similarly, if $B\geq2k-1$, from  $g(\eta_{k},\eta_{k},\nu')=g(\eta_{k},\eta_{k},\nu)$
we have $(\eta_{k},\nu)\in K$.

Suppose that both $A$ and $B$ are less than $2k-1$. Then we have
\begin{align*}
k^2-1&=1+A+B\\
 &\leq1+ 2k-2+2k-2\\
 &=4k-3,
\end{align*}
which implies that $k\leq 3$. It has been verified that $(\eta_i,\nu)\in K$  for all $\nu\vdash i^2-1$ with $d(\nu)=1$ and $i\in \{1,2,3\}$ \cite[Sect. 3.4]{PPV}.
\end{proof}

\begin{proposition}\label{prp:cardnu1}
For every $\nu\vdash 3k^2$, if $d(\nu)=1$, then we have $(\gamma_k,\nu)\in K$.
\end{proposition}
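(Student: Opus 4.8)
The plan is to mirror the inductive strategy of Propositions \ref{prp:dnu1} and \ref{prp:csdnu1}: induct on $k$, using that $\gamma_k$ is self-conjugate. Writing the hook $\nu$ in the notation of Definition \ref{def:alw} as $\nu=(A+1,1^{B})$ with arm weight $A=a_1$ and leg weight $B=b_1$, we have $A+B=3k^2-1$. By Lemma \ref{lem:conj} and $\gamma_k=\gamma_k'$ we have $g(\gamma_k,\gamma_k,\nu)=g(\gamma_k,\gamma_k,\nu')$, so it suffices to treat the case $A\ge 6k-3$; moreover for $k\ge4$ at least one of $A,B$ is $\ge 6k-3$, since otherwise $3k^2-1=A+B\le 12k-8$, i.e. $3k^2-12k+7\le0$, which is impossible for $k\ge4$. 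Thus for $k\ge4$ we may assume $A\ge 6k-3$, so that $\nu_{k-1}:=\nu-(6k-3)$ (subtracting from the first row) is again a hook, now of $3(k-1)^2$, and by the induction hypothesis $(\gamma_{k-1},\nu_{k-1})\in K$. The finitely many base cases $k\le3$ (where the reduction may fail) are checked directly by computer.

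The heart of the argument is to rebuild $\gamma_k$ from $\gamma_{k-1}$ by one vertical sum followed by one semigroup addition, returning the removed $6k-3$ cells to the arm of the hook. Explicitly I claim
\[
\gamma_k=\bigl(\gamma_{k-1}\cup(k-2,1,1)\bigr)+(3^{k-1},2,2,1^{2k-4}).
\]
Indeed, inserting $(k-2,1,1)$ into $\gamma_{k-1}$ lengthens its leading block of $(k-2)$'s to three copies and appends two further $1$'s; adding $(3^{k-1},2,2,1^{2k-4})$ rowwise then raises the first $k-1$ arm rows by $3$, promotes the old middle part to the new last arm part $k+1$ and the first inserted $(k-2)$ to the new middle part $k$, and lifts every remaining paired value by one, reproducing exactly the arm $(3k-1,\dots,k+1)$, the middle $k$, and the pairs $(k-1,k-1,\dots,1,1)$ of $\gamma_k$. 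Since $|(k-2,1,1)|=k$ and $|(3^{k-1},2,2,1^{2k-4})|=5k-3$, the two pieces account for all $6k-3$ added cells.

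I then assemble the positivity. Because the trivial representation occurs in every tensor square, $(\zeta,(|\zeta|))\in K$ for every partition $\zeta$ — the same device already used for $((k),(k))$ and $((1^{k-1}),(k-1))$ in Proposition \ref{prp:csdnu1}. Applying Lemma \ref{lem:vs} to $(\gamma_{k-1},\nu_{k-1})\in K$ and $\bigl((k-2,1,1),(k)\bigr)\in K$ gives $\bigl(\gamma_{k-1}\cup(k-2,1,1),\,\nu_{k-1}+(k)\bigr)\in K$; the semigroup property together with $\bigl((3^{k-1},2,2,1^{2k-4}),(5k-3)\bigr)\in K$ then yields
\[
\Bigl(\bigl(\gamma_{k-1}\cup(k-2,1,1)\bigr)+(3^{k-1},2,2,1^{2k-4}),\ \nu_{k-1}+(k)+(5k-3)\Bigr)\in K.
\]
The first component is $\gamma_k$ by the displayed identity, while the second, since $(k)$ and $(5k-3)$ are single rows added to the arm, equals $\nu_{k-1}+(6k-3)=\nu$. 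Hence $(\gamma_k,\nu)\in K$, completing the induction.

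The main obstacle is precisely the combinatorial decomposition of the second paragraph. A naive rowwise difference $\gamma_k-\gamma_{k-1}$ is not a partition — the paired tail forces the successive differences to oscillate between $1$ and $2$ — so the reduction cannot be carried out by the semigroup property alone, unlike the clean single-piece reduction for $\rho_m$. The resolution is to absorb the offending short rows by a preliminary vertical sum with $(k-2,1,1)$, after which the remaining rowwise correction $(3^{k-1},2,2,1^{2k-4})$ is genuinely non-increasing. Verifying this identity uniformly in $k$ (with the degenerate boundary case $k=4$, where the intermediate pairs disappear, handled separately) and disposing of the base cases $k\le3$ are the only points requiring care.
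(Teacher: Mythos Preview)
Your proof is correct, but the paper's route is considerably simpler. The paper observes the cleaner one-piece decomposition
\[
\gamma_k=\bigl(\gamma_{k-1}+(1^{3k-2})\bigr)\cup(3k-1),
\]
which follows immediately from the fact that deleting the first row of $\gamma_k$ leaves exactly $\gamma_{k-1}$ with every part increased by $1$ (and two trailing $1$'s appended). This avoids the difficulty you describe: the rowwise difference $\gamma_k-\gamma_{k-1}$ is \emph{not} a partition, but once you strip off the top row first (via vertical sum) the remainder differs from $\gamma_{k-1}$ by the genuine partition $(1^{3k-2})$. The paper then uses $\bigl((1^{3k-2}),(3k-2)\bigr)\in K$ and $\bigl((3k-1),(3k-1)\bigr)\in K$ in place of your two trivial-representation inputs, giving a two-step argument identical in spirit to Proposition~\ref{prp:csdnu1}. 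Your decomposition $\gamma_k=\bigl(\gamma_{k-1}\cup(k-2,1,1)\bigr)+(3^{k-1},2,2,1^{2k-4})$ is valid (I checked it for $k=4,5$ and the general pattern holds), but it is unnecessarily elaborate and forces you to track a boundary case at $k=4$; the paper's version is uniform in $k$ and makes the parallel with the $\eta_k$ argument transparent. The inductive bound (both $A,B<6k-3$ forces $k\le3$) and the treatment of base cases by computer are the same in both proofs.
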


\begin{proof}
The proof is  similar  to Proposition \ref{prp:csdnu1}.
Suppose that $\nu\vdash 3k^2$ and $d(\nu)=1$. Then by the notations in Definition \ref{def:alw}, we have
$$3k^2=|\nu|=1+A+B=1+a_1+b_1.$$

Note that $\gamma_k=\left(\gamma_{k-1}+(1^{3k-2})\right)\cup (3k-1)$.
So if  $A\geq 6k-3$, then $(\gamma_{k-1},\nu-(6k-3))\in K$ implies $(\gamma_{k},\nu)\in K$. Similar result is hold for  $B\geq 6k-3$.

If both $A$ and $B$ are less than $6k-3$, we have
\begin{align*}
3k^2&=1+A+B\\
 &\leq1+ 6k-4+6k-4\\
 &=12k-7,
\end{align*}
which implies that $k\leq 3$. By computer, we can easily check that $(\gamma_i,\nu)\in K$ for all $\nu\vdash 3i^2$ with $d(\nu)=1$ and $i\in \{1,2,3\}$.
\end{proof}

In the following, we discuss the positivity of $g(\eta_k,\eta_k,\mu)$ and $g(\gamma_k,\gamma_k,\nu)$, where $\mu\in D(k^2-1,2)$ and $\nu\in D(3k^2,2)$.

\begin{lemma}\label{lem:k22k}
For $k\in \mathbb{N}$, if $k$ is even, we have $((k^2),(k^2))\in K$. If $k$ is odd, we have $g((k^2),(k^2),(k^2))=0$. However, if $k$ is odd, we have $((k^4),(2k,2k))\in K$.
\end{lemma}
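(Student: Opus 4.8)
The plan is to treat the three assertions separately, handling the two positivity statements with the semigroup machinery already in place and isolating the vanishing statement as the genuine difficulty.

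For the first statement ($k$ even) I would start from the single base fact $((2,2),(2,2))\in K$, i.e. $g((2,2),(2,2),(2,2))=1$, which is immediate by hand from the character table of $S_4$ (and in any case a one-line computer check). Writing $k=2s$, so that $(k^2)=(2s,2s)=s\cdot(2,2)$, the semigroup property applied $s$ times to $((2,2),(2,2))$ yields $((2s,2s),(2s,2s))=((k^2),(k^2))\in K$. This settles the even case entirely.

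For the third statement ($k$ odd) I would assemble $((k^4),(2k,2k))$ from two ingredients. First, the vertical-sum Lemma \ref{lem:vs} applied to $((2,2),(2,2))\in K$ with itself gives $((2,2)\cup(2,2),\,(2,2)+(2,2))=((2^4),(4,4))\in K$, which serves as the induction increment. Second, I would record the base case $((3^4),(6,6))\in K$ by computer. Applying the semigroup property $j$ times to add the increment then produces $((3+2j)^4,(6+4j,6+4j))=((k^4),(2k,2k))\in K$ for $k=2j+3$, covering all odd $k\ge 3$. Note that the route used for the first statement is unavailable here: $(k^4)=(k,k)\cup(k,k)$ and $(2k,2k)=(k,k)+(k,k)$ would require $((k,k),(k,k))\in K$, which fails for odd $k$ by the second statement, and this is precisely why one must pass to four rows.

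The main obstacle is the second statement, the vanishing $g((k^2),(k^2),(k^2))=0$ for odd $k$, since vanishing (unlike positivity) is not propagated by the semigroup and must be established structurally. My plan is a symmetric-function computation. Using Jacobi--Trudi, $s_{(k,k)}=h_k^2-h_{k+1}h_{k-1}$, and the coproduct rule for the internal (Kronecker) product $(fg)\ast h=\sum (f\ast h_{(1)})(g\ast h_{(2)})$ together with $h_a\ast g=g$ for $g$ of degree $a$, a direct expansion gives
\[
s_{(k,k)}\ast s_{(k,k)}=\sum_{i=0}^{k}h_i^2h_{k-i}^2-2\sum_{i=1}^{k}h_ih_{i-1}h_{k-i}h_{k+1-i}+\sum_{i=2}^{k+1}h_ih_{i-2}h_{k+1-i}^2 .
\]
Pairing with $s_{(k,k)}$ and using $\langle h_\mu,s_{(k,k)}\rangle$, which counts semistandard Young tableaux of shape $(k,k)$ and content $\mu$, expresses $g((k^2),(k^2),(k^2))$ as an explicit alternating sum of two-row Kostka numbers (one verifies it equals $1$ for $k=2,4$ and $0$ for $k=1,3$). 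The crux is to show this alternating sum vanishes exactly when $k$ is odd; I would do this via the two-row Kostka rule (a choice of second-row multiplicities $(s_v)$ with $\sum_{v\le t}s_v\le\lfloor P_t/2\rfloor$ for every partial sum $P_t$ of $\mu$), using it to build a sign-reversing involution on the terms, with the parity of $k$ deciding whether a single unmatched fixed term survives. Conceptually this reflects the Gelfand-pair picture for $(S_{2k},\,S_2\wr S_k)$, where $[(k,k)]$ carries an invariant vector precisely when $(k,k)=2(\tfrac k2,\tfrac k2)$ is an even partition, i.e. when $k$ is even; I expect the bookkeeping in the involution, rather than any conceptual point, to be the hard part.
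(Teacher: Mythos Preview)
Your two positivity arguments are correct and essentially match the paper's. For the odd case $((k^4),(2k,2k))\in K$, both you and the paper use the computer-checked base $((3^4),(6,6))\in K$ together with the increment $((2^4),(4,4))\in K$ and the semigroup property; you obtain the increment from $((2,2),(2,2))$ via the vertical-sum Lemma~\ref{lem:vs}, while the paper obtains it from the even case $((4,4),(4,4))\in K$ and transposition. For the even case $((k^2),(k^2))\in K$ the paper does not argue by semigroup at all but simply cites \cite[Lemma~1.6]{Tewari}, which gives the exact value $g((k,k),(k,k),(k,k))=1$ for $k$ even and $0$ for $k$ odd; your semigroup reduction to $((2,2),(2,2))$ is a perfectly good alternative.

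The real divergence is the vanishing statement. The paper treats $g((k^2),(k^2),(k^2))=0$ for odd $k$ as a quotation of \cite[Lemma~1.6]{Tewari}, so no work is done here. Your plan to recover this from a Jacobi--Trudi/coproduct expansion and a sign-reversing involution on two-row Kostka terms is reasonable in outline, but as written it is a gap: you have not actually constructed the involution, and you yourself flag the bookkeeping as the hard part. Since the whole lemma in the paper rests on a one-line citation, you are making this step far harder than necessary; either invoke the known formula (Tewari, or the classical fact that $s_{(k,k)}[s_{(k,k)}]$-type two-row Kronecker values are determined explicitly), or carry the involution through in full. The Gelfand-pair heuristic you mention is indeed the conceptual reason, but it does not by itself replace the missing combinatorial verification.
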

\begin{proof}
By Lemma 1.6 of \cite{Tewari}, we have that if $k$ is even, then $g((k^2),(k^2),(k^2))=1>0$ and if $k$ is odd, then $g((k^2),(k^2),(k^2))=0$.

If $k$ is odd, write $k$ as $2s+1$ for some $s$,
then we have $k=2(s-1)+3$ and $2k=4(s-1)+6$.
So we have
\begin{align*}
  (k^4)=&(k,k,k,k)=\Big(2(s-1)+3,2(s-1)+3,2(s-1)+3,2(s-1)+3\Big)\\
   =&(s-1)(2,2,2,2)+(3,3,3,3)=(s-1)(2^4)+(3^4),
\end{align*}
and
$$(2k,2k)=\Big(4(s-1)+6,4(s-1)+6\Big)=(s-1)(4,4)+(6,6).$$
Since $(2^4)'=(4,4)$ and $((4,4),(4,4))\in K$,   we have
$$((2^4),(4,4))\in K$$ and therefore $$((s-1)(2^4),(s-1)(4,4))\in K.$$
By computer we can verify that
$$((3^4),(6,6))\in K.$$
So by semigroup property we have
$$((k^4),(2k,2k))=\Big((s-1)(2^4)+(3^4), (s-1)(4,4)+(6,6)\Big)\in K.$$
\end{proof}

\begin{lemma}\label{lem:df2cse}
Suppose that  $k\geq 12$ is even and $\mu \in D(k^2-1,2)$.
Then deciding $(\eta_k,\mu)\in K$ can be reduced to $\eta_{k-1}$ or $\eta_{k-2}$.
\end{lemma}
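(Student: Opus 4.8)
The plan is to exploit the conjugation symmetry together with the semigroup property and the vertical sum of Lemma~\ref{lem:vs}, peeling a boundary strip off $\eta_k$ while peeling a matching rectangle off $\mu$. By Lemma~\ref{lem:conj}, $g(\eta_k,\eta_k,\mu)=g(\eta_k,\eta_k,\mu')$, so I may assume that the arm weight $A$ of $\mu$ is at least its leg weight $B$. Since $|\eta_k|=k^2-1=4+A+B$, this gives $A\ge (k^2-4)/2$; and in the notation of Definition~\ref{def:alw} one checks $A=\mu_1+\mu_2-4$, so $\mu_1+\mu_2\ge (k^2+4)/2$.

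Next I would record two ways of rebuilding $\eta_k$ from a smaller chopped square. The first, already implicit in Proposition~\ref{prp:csdnu1}, is $\eta_k=\bigl(\eta_{k-1}+(1^{k-1})\bigr)\cup(k)$, used together with $((1^{k-1}),(k-1))\in K$ and $((k),(k))\in K$. The second is $\eta_k=\bigl(\eta_{k-2}+(2^{k-2})\bigr)\cup(k,k)$, which I would verify directly on Young diagrams. For its two pieces I invoke Lemma~\ref{lem:k22k}: as $k$ is even, $((k,k),(k,k))\in K$, while $(2^{k-2})'=(k-2,k-2)$, so by Lemma~\ref{lem:conj} and Lemma~\ref{lem:k22k} applied to the even number $k-2$ we get $g((2^{k-2}),(2^{k-2}),(k-2,k-2))=g((k-2,k-2),(k-2,k-2),(k-2,k-2))>0$, i.e. $((2^{k-2}),(k-2,k-2))\in K$.

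The reduction then splits into two cases. If $\mu_1-\mu_2\ge 2k-1$, I set $\nu=\mu-(2k-1)$ (shortening the first row only); this is a double hook of $(k-1)^2-1$, and feeding $(\eta_{k-1},\nu)\in K$ through the semigroup property with $((1^{k-1}),(k-1))$ and then through Lemma~\ref{lem:vs} with $((k),(k))$ yields $(\eta_k,\mu)\in K$, a reduction to $\eta_{k-1}$. Otherwise $\mu_1-\mu_2\le 2k-2$, and I claim $\mu_2\ge 2k$: were $\mu_2\le 2k-1$, then $A=\mu_1+\mu_2-4\le 6k-8$, contradicting $A\ge (k^2-4)/2$ once $k^2-12k+12>0$, i.e. for $k\ge 12$. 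Hence $\nu=\mu-(2k-2,2k-2)$ is a partition (as $\mu_3\le 2$) and a double hook of $(k-2)^2-1$; feeding $(\eta_{k-2},\nu)\in K$ through the semigroup property with $((2^{k-2}),(k-2,k-2))$ and then Lemma~\ref{lem:vs} with $((k,k),(k,k))$ gives $(\eta_k,\mu)\in K$, a reduction to $\eta_{k-2}$.

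The routine steps are verifying the two factorizations of $\eta_k$ and checking that each $\nu$ really is a double hook of the stated size, its Durfee size remaining $2$ because only the arm is shortened while $\nu_2\ge 2$. The crux is the counting in the second case: it is exactly the quadratic inequality in $k$ (satisfied for $k\ge 12$) that guarantees one of the two moves is always available, and it is precisely the even parity of $k$ that lets Lemma~\ref{lem:k22k} supply the two-row rectangle pieces $(k,k)$ and $(k-2,k-2)$ driving the $(k-2)$-step; the odd case would instead need the $((k^4),(2k,2k))$ statement of Lemma~\ref{lem:k22k} and is handled separately.
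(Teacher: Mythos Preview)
Your proof is correct and follows essentially the same approach as the paper: both use the decompositions $\eta_k=(\eta_{k-1}+(1^{k-1}))\cup(k)$ and $\eta_k=(\eta_{k-2}+(2^{k-2}))\cup(k,k)$ together with Lemma~\ref{lem:k22k} and Lemma~\ref{lem:vs}, and both rely on the same quadratic counting argument to guarantee that one of the two moves is available when $k\ge 12$. The only difference is organizational: the paper lists four conditions $A_1\ge 2k-1$, $B_1\ge 2k-1$, $A_2\ge 4k-4$, $B_2\ge 4k-4$ and argues that at least one must hold, whereas you first pass to $A\ge B$ via conjugation and then dichotomize on $\mu_1-\mu_2$, which is exactly $A_1$; your claim $\mu_2\ge 2k$ is equivalent to $A_2\ge 4k-4$, so the two case splits coincide.
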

\begin{proof}
For $\mu \in D(k^2-1,2)$, with notations in Definition \ref{def:alw} we have
$$k^2-1=|\mu|=4+A+B=4+A_1+A_2+B_1+B_2.$$
If $k\geq 12$, then one of the following conditions must occur: \begin{align}\label{eq-csdf2}
A_2\geq 4k-4,~~B_2\geq 4k-4,~~A_1\geq 2k-1~\text{and}~B_1\geq 2k-1.     \end{align}
Otherwise, we have
\begin{align*}
 k^2-1=& 4+A_1+A_2+B_1+B_2\\
 \leq& 4+2k-2+4k-5+2k-2+4k-5\\
 =&12k-10
\end{align*}
which contradicts $k\geq12$.

Suppose that $k$ is even.
If $\mu \in D(k^2-1,2)$ satisfies one of the four conditions in (\ref{eq-csdf2}), then deciding $(\eta_k,\mu)\in K$ can be reduced to $\eta_{k-2}$ or $\eta_{k-1}$. More precisely, if
$A_1\geq 2k-1$ or $B_1\geq 2k-1$, then from the proof of Proposition \ref{prp:csdnu1} we know that $(\eta_{k-1},\mu-(2k-1))\in K$ or $(\eta_{k-1},\mu'-(2k-1))\in K$
implies $(\eta_{k},\mu)\in K$.
Suppose that $A_2\geq 4k-4$ and let $\nu=\mu-(2k-2,2k-2)\vdash (k-2)^2-1$. If $k$ is even, by Lemma \ref{lem:k22k} we have
$$\big((k,k),(k,k)\big)~\text{and}~ \big((k-2,k-2),(k-2,k-2)\big)\in K.$$
Since $(k-2,k-2)'=(2^{k-2})$, we have $$((2^{k-2}),(k-2,k-2))\in K.$$
Observe that
$$\eta_{k}=\left(\eta_{k-2}+(2^{k-2})\right)
\cup (k,k).$$
If we let $(\eta_{k-2},\nu)\in K$, then
$$(\eta_{k-2}+(2^{k-2}),\nu+(k-2,k-2))\in K,$$
and therefore
$$\left(\left(\eta_{k-2}+(2^{k-2})\right)\cup(k,k),
\nu+(k-2,k-2)+(k,k)\right)=(\eta_{k},\mu)\in K.$$
Similarly, if $B_2\geq 4k-4$, then $(\eta_{k-2},\mu'-(2k-2,2k-2))\in K$ implies $(\eta_{k},\mu)\in K$.
\end{proof}

Suppose that $k$ is odd. By Lemma \ref{lem:k22k} we have $g((k,k),(k,k),(k,k))=0$. So
the argument in Lemma \ref{lem:df2cse} is not suitable for the odd number. Interestingly, by Lemma \ref{lem:k22k} we have $((k^4),(2k,2k))\in K$ when $k$ is odd. By similar argument in Lemma \ref{lem:df2cse}, in the following lemma we will show that if $k\geq 19$ is odd, then deciding $(\eta_k,\mu)\in K$ can be reduced to $\eta_{k-4}$ or $\eta_{k-1}$.
Since the discussions are similar, we just outline the proof here.

\begin{lemma}\label{lem:df2cso}
Suppose that  $k\geq 19$ is odd and $\mu \in D(k^2-1,2)$.
Then deciding $(\eta_k,\mu)\in K$ can be reduced to $\eta_{k-1}$ or $\eta_{k-4}$.
\end{lemma}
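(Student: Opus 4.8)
The plan is to mirror the proof of Lemma~\ref{lem:df2cse}, replacing the identity $((k,k),(k,k))\in K$ (which fails for odd $k$ by Lemma~\ref{lem:k22k}) with $((k^4),(2k,2k))\in K$. Because the latter consumes four copies of a length-$k$ part rather than two, the reduction jumps from $\eta_k$ directly to $\eta_{k-4}$, and the building-block identity becomes
$$\eta_k=\left(\eta_{k-4}+(4^{k-4})\right)\cup(k^4),$$
which one checks directly from $\eta_k=(k^{k-1},k-1)$: adding $(4^{k-4})$ rowwise to $\eta_{k-4}=((k-4)^{k-5},k-5)$ yields $(k^{k-5},k-1)$, and adjoining four parts equal to $k$ gives $\eta_k$.

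First I would set up the weight count. Writing $k^2-1=4+A_1+A_2+B_1+B_2$ with the notation of Definition~\ref{def:alw} (Durfee size $2$), I would show that for $k\geq 19$ at least one of
$$A_2\geq 8k-16,\qquad B_2\geq 8k-16,\qquad A_1\geq 2k-1,\qquad B_1\geq 2k-1$$
must hold: otherwise, using that $A_2,B_2$ are even, $k^2-1\leq 4+2(2k-2)+2(8k-18)=20k-36$, i.e. $k^2-20k+35\leq 0$, forcing $k\leq 18$, a contradiction. The conditions $A_1\geq 2k-1$ or $B_1\geq 2k-1$ are handled exactly as in Proposition~\ref{prp:csdnu1} and Lemma~\ref{lem:df2cse}, reducing $(\eta_k,\mu)$ to $(\eta_{k-1},\mu-(2k-1))$ (or to $\mu'$), since $((1^{k-1}),(k-1))$ and $((k),(k))$ always lie in $K$.

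The substantive case is $A_2\geq 8k-16$ (the case $B_2\geq 8k-16$ follows by transposing $\mu$ and using $g(\eta_k,\eta_k,\mu)=g(\eta_k,\eta_k,\mu')$). Here I would set $\nu=\mu-(4k-8,4k-8)$; since $A_2\geq 8k-16$ forces $\mu_2\geq 4k-6$, this $\nu$ is again a partition of Durfee size $2$ with $\nu\vdash (k^2-1)-(8k-16)=(k-4)^2-1$, so $\nu\in D((k-4)^2-1,2)$. Assuming the reduced statement $(\eta_{k-4},\nu)\in K$, the semigroup property combined with $((4^{k-4}),(2k-8,2k-8))\in K$ gives $(\eta_{k-4}+(4^{k-4}),\nu+(2k-8,2k-8))\in K$, and then Lemma~\ref{lem:vs} together with $((k^4),(2k,2k))\in K$ yields
$$\left(\left(\eta_{k-4}+(4^{k-4})\right)\cup(k^4),\ \nu+(2k-8,2k-8)+(2k,2k)\right)=(\eta_k,\mu)\in K,$$
because $(2k-8,2k-8)+(2k,2k)=(4k-8,4k-8)$ restores $\nu$ to $\mu$.

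The one genuinely new point, and the step I would verify most carefully, is the membership $((4^{k-4}),(2k-8,2k-8))\in K$. Setting $t=k-4$ (which is odd), Lemma~\ref{lem:conj} gives $g((4^{t}),(4^{t}),(2t,2t))=g((t^4),(t^4),(2t,2t))$, and the right-hand side is positive by Lemma~\ref{lem:k22k} applied with its parameter equal to the odd number $t$; since $(2k-8,2k-8)=(2t,2t)$, this is exactly what is needed. Thus the only real obstacle is organizational: arranging the odd analogue of Lemma~\ref{lem:k22k} so that both the adjoined rectangle $(k^4)$ and the rowwise block $(4^{k-4})$ are supplied by the same positivity statement. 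Once that is in place, the argument is the parity-adjusted copy of Lemma~\ref{lem:df2cse}, which is why it suffices to outline it.
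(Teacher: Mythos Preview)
Your proposal is correct and follows essentially the same approach as the paper: the same counting inequality to force one of the four conditions, the same decomposition $\eta_k=\left(\eta_{k-4}+(4^{k-4})\right)\cup(k^4)$, and the same use of Lemma~\ref{lem:k22k} (applied at both $k$ and $k-4$, odd) together with Lemma~\ref{lem:conj} and Lemma~\ref{lem:vs}. Your write-up is in fact a bit more careful than the paper's outline (you verify the building-block identity, check that $\nu$ remains a partition of Durfee size~$2$, and exploit the parity of $A_2$ to sharpen the bound to $8k-18$), but the argument is the same.
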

\begin{proof}
If $k\geq 19$, then one of the following conditions must occur: \begin{align}\label{eq-csdf2odd}
A_2\geq 8k-16,~~B_2\geq 8k-16,~~A_1\geq 2k-1~\text{and}~B_1\geq 2k-1.
\end{align}
Otherwise, we have
\begin{align*}
 k^2-1=& 4+A_1+A_2+B_1+B_2\\
 \leq& 4+2k-2+8k-17+2k-2\\
 =&20k-34,
 \end{align*}
which contradicts $k\geq19$.

If $\mu \in D(k^2-1,2)$ satisfies one of the four conditions in (\ref{eq-csdf2odd}), then deciding $(\eta_k,\mu)\in K$ can be reduced to $\eta_{k-4}$ or $\eta_{k-1}$.
We just discuss the condition when $A_2\geq 8k-16$ here.

Suppose that $A_2\geq 8k-16$ and let $\nu=\mu-(4k-8,4k-8)\vdash (k-4)^2-1$. If $k$ is odd, by Lemma \ref{lem:k22k} we have $$\left((k^4),(2k,2k)\right)\in K~\text{and}~\left(\left((k-4)^4\right),(2k-8,2k-8)\right)\in K$$
and  from $\left((k-4)^4\right)'=\left(4^{k-4}\right)$
we have
$$\left((4^{k-4}),(2k-8,2k-8)\right)\in K.$$
Observe that $$\eta_{k}=\left(\eta_{k-4}+(4^{k-4})\right)\cup(k^4).$$
So if we let $(\eta_{k-4},\nu)\in K$, then
$(\eta_{k},\mu)\in K$.
\end{proof}
Thus, by Lemma \ref{lem:df2cse} and \ref{lem:df2cso}, for $\mu \in D(k^2-1,2)$ we have the following proposition.
\begin{proposition}
Suppose that  $(\eta_k,\mu)\in K$ for all $k$ such that $1\leq k \leq 18$ and all $\mu \in D(k^2-1,2)$. Then for all $k\geq 19$ and $\mu \in D(k^2-1,2)$, we also have $(\eta_k,\mu)\in K$.
\end{proposition}

\begin{lemma}\label{lem:n3k-1}
For $n\in \mathbb{N}$, if $n$ is odd, we have
$((n,n-2),(n-1,n-1))\in K$.
If $n$ is even, we have $g((n,n-2),(n,n-2),(n-1,n-1))=0$. However, if $n$ is even, we have
$((n,n-2,n-4,n-6),(2n-6,2n-6))\in K$.
\end{lemma}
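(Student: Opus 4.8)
My plan is to handle the three assertions separately. The first and third are positivity statements that I expect to follow cleanly from the semigroup property, in the spirit of Lemma \ref{lem:k22k} and Lemma \ref{lem:m-1}; the middle one is a vanishing statement, which the semigroup property cannot reach, and this will be the main obstacle. Note that the two positivity claims are for the two different parities, so there is no tension with the vanishing in between.

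For the odd case I would write $n=2s+1$ and use the rowwise decompositions
\[ (n,n-2)=(s-1)(2,2)+(3,1), \qquad (n-1,n-1)=(s-1)(2,2)+(2,2). \]
Since $g((2,2),(2,2),(2,2))=1$ we have $((2,2),(2,2))\in K$, and a direct check (indeed $g((3,1),(3,1),(2,2))=1$) gives the base case $((3,1),(2,2))\in K$. Applying the semigroup property to $s-1$ copies of $((2,2),(2,2))$ together with $((3,1),(2,2))$ then yields $((n,n-2),(n-1,n-1))\in K$ for every odd $n\geq 3$.

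For the third assertion ($n$ even) I would use a doubling argument. Writing $n=2t$ and recalling $\tau_t^4=(t,t-1,t-2,t-3)$, one has
\[ (n,n-2,n-4,n-6)=\tau_t^4+\tau_t^4, \qquad (2n-6,2n-6)=(2t-3,2t-3)+(2t-3,2t-3). \]
By (\ref{itm:mi-4}) of Lemma \ref{lem:mi} we have $(\tau_t^4,(2t-3,2t-3))\in K$, so applying the semigroup property to this pair with itself gives $\big((n,n-2,n-4,n-6),(2n-6,2n-6)\big)\in K$ for all even $n\geq 6$.

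The hard part is the middle assertion: proving $g((n,n-2),(n,n-2),(n-1,n-1))=0$ for even $n$. Since this is a genuinely two-row Kronecker coefficient, I would compute it directly. One route is to expand $s_{(n,n-2)}=h_{n}h_{n-2}-h_{n+1}h_{n-3}$ by the Jacobi--Trudi identity, evaluate the internal products $h_\alpha * h_\beta$ as sums over contingency tables, and extract the coefficient of $s_{(n-1,n-1)}$ via Kostka numbers. A cleaner route is to split the tensor square as $[\lambda]\otimes[\lambda]=S^2[\lambda]\oplus\Lambda^2[\lambda]$ and to evaluate both $g(\lambda,\lambda,\nu)=g_++g_-$ and $g_+-g_-=\langle\psi_\lambda,\chi^\nu\rangle$, where $\lambda=(n,n-2)$, $\nu=(n-1,n-1)$ and $\psi_\lambda(g)=\chi^\lambda(g^2)$, and then to verify that both multiplicities vanish when $n$ is even. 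Either way the crux is the parity bookkeeping that makes the balanced shape $(n-1,n-1)=(N/2,N/2)$, with $N=2n-2$, drop out precisely in the even case; this is the same parity phenomenon underlying Tewari's Lemma 1.6 invoked in Lemma \ref{lem:k22k}, and it is consistent with the positivity obtained above for odd $n$.
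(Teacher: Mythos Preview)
Your arguments for the two positivity claims are correct, though both take a longer road than the paper does. For odd $n$ the paper simply quotes Tewari's Theorem~1.7, which gives the exact value $g((n,n-2),(n,n-2),(n-1,n-1))=1$; your semigroup build-up from $((2,2),(2,2))$ and $((3,1),(2,2))$ is a nice self-contained alternative that avoids the external citation. For the even-$n$ four-row statement the paper is actually more direct than your doubling trick: since $(n,n-2,n-4,n-6)$ has distinct parts and $(n,n-2,n-4,n-6)\unlhd(2n-6,2n-6)$ for $n\geq 6$, Lemma~\ref{lem:mmn} applies in one shot. Your route via $(\tau_t^4,(2t-3,2t-3))\in K$ and the semigroup property is valid but uses Lemma~\ref{lem:mi}(\ref{itm:mi-4}), which itself rests on Lemma~\ref{lem:mmn}, so you are going through the same fact twice.

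The genuine gap is the vanishing assertion. You outline two plausible strategies (Jacobi--Trudi plus contingency tables, or the $S^2/\Lambda^2$ split) but do not carry either out; ``the crux is the parity bookkeeping'' is not a proof. Both approaches can be pushed through, but the two-row Kronecker computation is not entirely trivial and you would need to actually exhibit the cancellation. The paper sidesteps all of this by invoking Tewari's Theorem~1.7, which gives the closed form for $g((n,n-2),(n,n-2),(n-1,n-1))$ and in particular its vanishing for even $n$. If you want a self-contained argument, the cleanest route is probably the Remmel--Rosas formula for two-row Kronecker coefficients; otherwise, citing \cite{Tewari} as the paper does is the efficient choice.
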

\begin{proof}
If $n$ is odd, by Theorem 1.7 of \cite{Tewari} we have
$$g((n,n-2),(n,n-2),(n-1,n-1))=g((n,n-2),(n-1,n-1),(n,n-2))=1>0.$$
Moreover, if $n$ is even, we have $g((n,n-2),(n,n-2),(n-1,n-1))=0$.
Since $(n,n-2,n-4,n-6)\unlhd (2n-6,2n-6)$, by Lemma \ref{lem:mmn} we have
$$((n,n-2,n-4,n-6),(2n-6,2n-6))\in K.$$
\end{proof}

\begin{lemma}\label{lem:df2caodd}
Suppose that $\mu \in D(3k^2,2)$, where  $k\geq 12$ and $3k-1$ is odd. Then deciding $(\gamma_k,\mu)\in K$ can be reduced to
$\gamma_{k-1}$ or $\gamma_{k-2}$.

\end{lemma}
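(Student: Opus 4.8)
The plan is to mirror the proof of Lemma~\ref{lem:df2cse}, replacing the two chopped-square building blocks by the caret building blocks furnished by Lemma~\ref{lem:n3k-1}. Writing $\mu\in D(3k^2,2)$ in the notation of Definition~\ref{def:alw}, we have
$$3k^2=|\mu|=4+A_1+A_2+B_1+B_2.$$
First I would record the dichotomy: if $k\geq 12$, then at least one of
$$A_1\geq 6k-3,\qquad B_1\geq 6k-3,\qquad A_2\geq 12k-12,\qquad B_2\geq 12k-12$$
must hold. This is the same counting argument as in Lemma~\ref{lem:df2cse}: if all four failed, then $3k^2\leq 4+2(6k-4)+2(12k-13)=36k-30$, i.e. $k^2-12k+10\leq 0$, which contradicts $k\geq 12$. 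The arm conditions feed the reductions, and the leg conditions are handled by conjugation, using that $\gamma_k$ is self-conjugate so that $g(\gamma_k,\gamma_k,\mu)=g(\gamma_k,\gamma_k,\mu')$ by Lemma~\ref{lem:conj}.

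If $A_1\geq 6k-3$, then $\mu-(6k-3)$ is still a partition (since $A_1=\mu_1-\mu_2$), and the single-row argument from the proof of Proposition~\ref{prp:cardnu1} applies verbatim via the identity $\gamma_k=\bigl(\gamma_{k-1}+(1^{3k-2})\bigr)\cup(3k-1)$ together with $((1^{3k-2}),(3k-2))\in K$ and $((3k-1),(3k-1))\in K$; these contribute $(3k-2)+(3k-1)=6k-3$, so $(\gamma_{k-1},\mu-(6k-3))\in K$ implies $(\gamma_k,\mu)\in K$, reducing the question to $\gamma_{k-1}$. The case $B_1\geq 6k-3$ reduces to this one by passing to $\mu'$.

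The main content is the reduction to $\gamma_{k-2}$ when $A_2\geq 12k-12$. The key structural identity, which I would verify by a direct Young-diagram computation, is
$$\gamma_k=\Bigl(\gamma_{k-2}+(2^{3k-5},1^2)\Bigr)\cup(3k-1,3k-3).$$
Here $(2^{3k-5},1^2)'=(3k-3,3k-5)$, and the hypothesis that $3k-1$ is odd forces $3k-3$ odd as well; this is exactly where the parity enters. By Lemma~\ref{lem:n3k-1} with $n=3k-1$ we get $((3k-1,3k-3),(3k-2,3k-2))\in K$, and with $n=3k-3$ we get $((3k-3,3k-5),(3k-4,3k-4))\in K$, which by Lemma~\ref{lem:conj} yields the column block $((2^{3k-5},1^2),(3k-4,3k-4))\in K$. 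Setting $\nu=\mu-(6k-6,6k-6)$, which is a partition of $3(k-2)^2$ since $A_2\geq 12k-12$ gives $\mu_2\geq 6k-4$ and hence $\mu_2-(6k-6)\geq 2\geq\mu_3$, I would combine $(\gamma_{k-2},\nu)\in K$ with the column block through the semigroup property to obtain $(\gamma_{k-2}+(2^{3k-5},1^2),\nu+(3k-4,3k-4))\in K$, and then vertically add the row block through Lemma~\ref{lem:vs} to get $(\gamma_k,\nu+(3k-4,3k-4)+(3k-2,3k-2))=(\gamma_k,\mu)\in K$. The case $B_2\geq 12k-12$ again follows by conjugation.

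I expect the main obstacle to be purely combinatorial bookkeeping: establishing the two-step recursion $\gamma_k=(\gamma_{k-2}+(2^{3k-5},1^2))\cup(3k-1,3k-3)$ and checking that the column contribution $(3k-4,3k-4)$ and the row contribution $(3k-2,3k-2)$ reassemble to exactly $(6k-6,6k-6)$, so that $\nu+(3k-4,3k-4)+(3k-2,3k-2)$ is genuinely $\mu$. All representation-theoretic input is packaged in Lemma~\ref{lem:n3k-1}, and its dependence on the parity of $n$ is precisely why the statement is split into the $3k-1$ odd case treated here and a separate $3k-1$ even case.
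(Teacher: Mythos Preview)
Your proposal is correct and follows essentially the same route as the paper: the same counting dichotomy, the same structural identity $\gamma_k=(\gamma_{k-2}+(2^{3k-5},1^2))\cup(3k-1,3k-3)$, the same two applications of Lemma~\ref{lem:n3k-1} at $n=3k-1$ and $n=3k-3$, and the same semigroup-plus-Lemma~\ref{lem:vs} assembly. Your extra checks (that $\mu_2-(6k-6)\geq 2\geq\mu_3$ so $\nu$ is a partition, and that $3k-1$ odd forces $3k-3$ odd) are accurate and make explicit two points the paper leaves implicit.
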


\begin{proof}
For $\mu \in D(3k^2,2)$, with notations in Definition \ref{def:alw} we have
$$3k^2=|\mu|=4+A+B=4+A_1+A_2+B_1+B_2.$$
If $k\geq 12$, then one of the following conditions must occur: \begin{align}\label{eq-cardf2}
A_2\geq 12k-12,~~B_2\geq 12k-12,~~A_1\geq 6k-3~\text{and}~B_1\geq 6k-3.     \end{align}
Otherwise, we have
\begin{align*}
 3k^2=& 4+A_1+A_2+B_1+B_2\\
 \leq& 4+12k-13+6k-4+12k-13+6k-4\\
 =&36k-30,
 \end{align*}
which contradicts  $k\geq12$.

Suppose that $3k-1$ is odd.
If $\mu \in D(3k^2,2)$ satisfies one of the four conditions in (\ref{eq-cardf2}), then deciding $(\gamma_k,\mu)\in K$ can be reduced to $\gamma_{k-2}$ or $\gamma_{k-1}$. More precisely, if
$A_1\geq 6k-3$ or $B_1\geq 6k-3$, then from the proof of Proposition \ref{prp:cardnu1} we know that
$(\gamma_{k-1},\mu-(6k-3))\in K$ or $(\gamma_{k-1},\mu'-(6k-3))\in K$ implies $(\gamma_{k},\mu)\in K$.
Suppose that $A_2\geq 12k-12$ and let $\nu=\mu-(6k-6,6k-6)\vdash 3(k-2)^2$. If $3k-1$ is odd, by Lemma \ref{lem:n3k-1} we have $$((3k-1,3k-3),(3k-2,3k-2))\in K~\text{and}~((3k-3,3k-5),(3k-4,3k-4))\in K.$$
and from $(3k-3,3k-5)'=(2^{3k-5},1,1)$ we have
$$((2^{3k-5},1,1),(3k-4,3k-4))\in K.$$
Observe that $$\gamma_{k}=\left(\gamma_{k-2}+(2^{3k-5},1,1)\right)\cup(3k-1,3k-3).$$
If we let $(\gamma_{k-2},\nu)\in K$, then
$$(\gamma_{k-2}+(2^{3k-5},1,1),\nu+(3k-4,3k-4))\in K.$$
Denote $\overline{\gamma_{k-2}}=\gamma_{k-2}+(2^{3k-5},1,1).$ Then we have
$$\left(\overline{\gamma_{k-2}}\cup(3k-1,3k-3),
\nu+(3k-4,3k-4)+(3k-2,3k-2)\right)=(\gamma_{k},\mu)\in K.$$
Similarly, if $B_2\geq 12k-12$, then  $(\gamma_{k-2},\mu'-(6k-6,6k-6))\in K$ implies
$(\gamma_{k},\mu)\in K$.
\end{proof}

Suppose that $3k-1$ is even. By Lemma \ref{lem:n3k-1} we have $g((3k-1,3k-3),(3k-1,3k-3),(3k-2,3k-2))=0$. So
the argument in Lemma \ref{lem:df2caodd} is not suitable for the even number $3k-1$. However, we have $((3k-1,3k-3,3k-5,3k-7),(6k-8,6k-8))\in K$.
So we can also obtain result that is similar to Lemma \ref{lem:df2caodd}. Since the discussions are similar, we just outline the proof here.
\begin{lemma}\label{lem:df2caeven}
Suppose that $\mu \in D(3k^2,2)$, where  $k\geq 19$ and $3k-1$ is even. Then deciding $(\gamma_k,\mu)\in K$ can be reduced to $\gamma_{k-1}$ or $\gamma_{k-4}$.
%
\end{lemma}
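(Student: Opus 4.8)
The plan is to follow the proof of Lemma~\ref{lem:df2caodd} line by line, replacing its two-row reduction by a four-row reduction adapted to the even parity of $3k-1$. I would begin with the box count $3k^2=|\mu|=4+A_1+A_2+B_1+B_2$ and show that for $k\geq 19$ at least one of
$$A_2\geq 24k-48,\quad B_2\geq 24k-48,\quad A_1\geq 6k-3,\quad B_1\geq 6k-3$$
holds: if all four failed, then $3k^2\leq 4+(6k-4)+(24k-49)+(6k-4)+(24k-49)=60k-102$, i.e. $k^2-20k+34\leq 0$, which is false once $k\geq 19$.

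When $A_1\geq 6k-3$ or $B_1\geq 6k-3$, the reduction to $\gamma_{k-1}$ is exactly the one already used in Proposition~\ref{prp:cardnu1} and Lemma~\ref{lem:df2caodd}: from $\gamma_k=(\gamma_{k-1}+(1^{3k-2}))\cup(3k-1)$ together with $((1^{3k-2}),(3k-2))\in K$ and $((3k-1),(3k-1))\in K$, one sees that $(\gamma_{k-1},\mu-(6k-3))\in K$ forces $(\gamma_k,\mu)\in K$, and the case $B_1\geq 6k-3$ is handled by passing to $\mu'$ via $g(\gamma_k,\gamma_k,\mu)=g(\gamma_k,\gamma_k,\mu')$ (recall $\gamma_k$ is self-conjugate).

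The substance of the argument is the reduction to $\gamma_{k-4}$ when $A_2\geq 24k-48$ (again $B_2\geq 24k-48$ follows by conjugating $\mu$). Since $3k-1$ is even, so is $3k-5$, so the even case of Lemma~\ref{lem:n3k-1} supplies both $((3k-1,3k-3,3k-5,3k-7),(6k-8,6k-8))\in K$ and $((3k-5,3k-7,3k-9,3k-11),(6k-16,6k-16))\in K$; conjugating the latter through Lemma~\ref{lem:conj} and using $(3k-5,3k-7,3k-9,3k-11)'=(4^{3k-11},3,3,2,2,1,1)$ gives $((4^{3k-11},3,3,2,2,1,1),(6k-16,6k-16))\in K$. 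I would then verify the structural identity
$$\gamma_k=\Big(\gamma_{k-4}+(4^{3k-11},3,3,2,2,1,1)\Big)\cup(3k-1,3k-3,3k-5,3k-7),$$
the four-row analogue of $\gamma_k=(\gamma_{k-2}+(2^{3k-5},1,1))\cup(3k-1,3k-3)$. Setting $\nu=\mu-(12k-24,12k-24)\vdash 3(k-4)^2$, which is a genuine partition exactly because $A_2\geq 24k-48$ allows the removal of $12k-24$ columns of length two, the assumption $(\gamma_{k-4},\nu)\in K$ and the semigroup property yield $(\gamma_{k-4}+(4^{3k-11},3,3,2,2,1,1),\nu+(6k-16,6k-16))\in K$, and then Lemma~\ref{lem:vs} gives $(\gamma_k,\nu+(6k-16,6k-16)+(6k-8,6k-8))=(\gamma_k,\mu)\in K$.

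I expect the only real obstacle to be bookkeeping: correctly identifying the rowwise correction term as $(4^{3k-11},3,3,2,2,1,1)$, checking that its conjugate is $(3k-5,3k-7,3k-9,3k-11)$ so that Lemma~\ref{lem:n3k-1} applies, and confirming the displayed decomposition of $\gamma_k$. Once these identities are settled, the two invocations of the semigroup and vertical-sum properties are routine, and the counting has been arranged so that the threshold $24k-48$ is exactly what forces $k\geq 19$.
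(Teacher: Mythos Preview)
Your proposal is correct and follows essentially the same route as the paper's own proof: the same four-way dichotomy on $A_1,A_2,B_1,B_2$ with the threshold $24k-48$, the same structural identity $\gamma_k=\bigl(\gamma_{k-4}+(4^{3k-11},3,3,2,2,1,1)\bigr)\cup(3k-1,3k-3,3k-5,3k-7)$, and the same appeal to Lemma~\ref{lem:n3k-1} at $n=3k-1$ and $n=3k-5$ followed by semigroup and vertical-sum. The only differences are cosmetic: you spell out the $B_1,B_2$ cases via conjugation and name Lemma~\ref{lem:vs} explicitly, whereas the paper leaves these implicit.
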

\begin{proof}
If $k\geq 19$, then one of the following conditions must occur: \begin{align}\label{eq-cardf2even}
A_2\geq 24k-48,~~B_2\geq 24k-48,~~A_1\geq 6k-3~\text{and}~B_1\geq 6k-3.     \end{align}
Otherwise, we have
\begin{align*}
 3k^2=& 4+A_1+A_2+B_1+B_2\\
 \leq&4+24k-49+6k-4+24k-49+6k-4\\
 =&60k-102,
\end{align*}
which contradicts $k\geq19$.

Suppose that $3k-1$ is even.
If $\mu \in D(3k^2,2)$ satisfies one of the four conditions in (\ref{eq-cardf2even}), then deciding $(\gamma_k,\mu)\in K$ can be reduced to $\gamma_{k-4}$ or $\gamma_{k-1}$.
We just discuss the condition when $A_2\geq 24k-48$.
Suppose that $A_2\geq 24k-48$ and let $\nu=\mu-(12k-24,12k-24)\vdash 3(k-4)^2$. If $3k-1$ is even, by Lemma \ref{lem:n3k-1} we have $$((3k-1,3k-3,3k-5,3k-7),(6k-8,6k-8))\in K,$$
and
$$((3k-5,3k-7,3k-9,3k-11),(6k-16,6k-16))\in K.$$
So by  $(3k-5,3k-7,3k-9,3k-11)'=(4^{3k-11},3,3,2,2,1,1)$, we have
$$\left((4^{3k-11},3,3,2,2,1,1),(6k-16,6k-16)\right)\in K.$$
Observe that $$\gamma_{k}=\left(\gamma_{k-4}+(4^{3k-11},3,3,2,2,1,1)\right)\cup
(3k-1,3k-3,3k-5,3k-7).$$
So if we let $(\gamma_{k-4},\nu)\in K$, then $(\gamma_{k},\mu)\in K$.
%
\end{proof}

Thus, by Lemma \ref{lem:df2caodd} and \ref{lem:df2caeven}, for $\mu \in D(3k^2,2)$ we have the following proposition.
\begin{proposition}
Suppose that  $(\gamma_k,\mu)\in K$ for all $k$ such that $1\leq k \leq 18$ and all $\mu \in D(3k^2,2)$. Then for all $k\geq 19$ and $\mu \in D(3k^2,2)$, we also have $(\gamma_k,\mu)\in K$.
\end{proposition}

\section{Final remarks and problems}\label{se:rem}

\subsection{The distribution of Durfee sizes}
In  \cite{can}, the authors found an asymptotic formula for $|D(n,k)|$. By Corollary 1 there, we can see that for each fixed $k$ the proportion $|D(n,k)|/|P(n)|$ tends to zero if $n\to \infty$. Moreover, the authors  showed that the sequence $\{|D(n,k)|\}$, $0\leq k\leq \lfloor\sqrt{n}\rfloor$, is asymptotically normal, unimodal, and log concave.  The most likely size of the Durfee square  for a partition in $P(n)$ is asymptotic to $(\sqrt{6} \log2/\pi)(\sqrt{n})$.

\subsection{Relation with Permutohedron}
By definition $\Lambda(\mu)=\{\nu|~\nu\unlhd\mu,~\nu\in~P(n)\}$.
In literature  (see e.g. \cite[Sec. 3.1]{stanley}), $\Lambda(\mu)$ (resp. $V(\mu)$) is called  the \emph{principal order ideal} generated by $\mu$ (resp. \emph{principal dual order ideal} generated by $\mu$).
Suppose that $\mu=(\mu_1,\mu_2,\ldots,\mu_n)\vdash n$. If $\ell(\mu)<n$, we let $\mu_i=0$ for $i>\ell(\mu)$.
For $k\geq \ell(\mu)$, $\mu$ can be viewed as a vector in $\mathbb{R}^{k}$.
For each $k\geq \ell(\mu)$, define the {\it $k$-th permutohedron}  $P_k(\mu)\subseteq\mathbb{R}^{k}$ by
\begin{align*}
P_k(\mu):=\text{ConvexHull}\{(\mu_{\sigma(1)},\mu_{\sigma(2)},\ldots,\mu_{\sigma (k)})|~\sigma\in S_k \}.
\end{align*}
It is well known that $\Lambda(\mu)\subseteq P_n(\mu)$, the {\it $n$-th permutohedron}  $P_n(\mu)$ associated to $\mu$ (see e. g. \cite{CaLiu,Postni}).

By the results in \cite{Postni}, can we give an estimate of
$\Lambda(\mu)$? It is nontrival, since when $\mu=(n)$, we have $\Lambda((n))=|P(n)|$ whose asymptotic estimation was given by Hardy and Ramanujan \cite{Andr}.
$P_{m}(\rho_{m-1})$ is called {\it the regular permutohedron} in \cite{Postni}. It was shown that $P_{m}(\rho_{m-1})$ is a {\it graphical zonotope}. It would be interesting to decide whether $P_{\frac{m(m-1)}{2}}(\rho_{m-1})$ is a  graphical zonotope or not.

\subsection{Irreducible characters vanishing on $\rho_m$ and $\widehat{\rho_m}$}

If $\chi^\lambda(\rho_m)$ and $\chi^\lambda(\widehat{\rho_m})=0$, by Pak and Bessenrodt's criteria we can't decide whether $[\lambda]$ appears in $[\rho_m]\otimes[\rho_m]$ or not \cite{Bessenrodt17,PPV}. Thus
Pak and Bessenrodt's criteria lead us to study the non-zero character values on a fixed conjugacy class which correspond to the non-zero elements on columns of the character table. It can also be seen to connect to work on nonvanishing conjugacy classes, that is, conjugacy classes on which no irreducible character vanishes. A partition is a nonvanishing partition if it labels a nonvanishing conjugacy class of a symmetric group.

Let  $N(\mu)$ denote the number of irreducible characters $\chi^\lambda$ such that $\chi^\lambda(\mu)=0$.
It has been shown that any nonvanishing partition should be of the form $(3^a, 2^b, 1^c)$ for some $a, b, c\geq0$ \cite{GLe,Morotti}.  Thus  we should have
$N(\rho_m)$, $N(\widehat{\rho_m})>0$ for $m\geq4$.
By  known results (see e. g. Remark 4.5 in \cite{Bessenrodt17}), we can see that irreducible characters vanishing on $\widehat{\rho_m}$ may be more than $\rho_m$. Thus we have the following problem.
\begin{problem}
Does it  hold that $N(\rho_m)<N(\widehat{\rho_m})$ for  $m\geq3$?
\end{problem}

The following problem gives an upper bound of non-zero elements on columns of the character table of $S_n$. It was mentioned in \cite{Morotti} and raised by A. Evseev.
\begin{problem} \cite{Morotti}
Let $\pi\in S_n$. Does it always hold that the number of irreducible characters of $S_n$ not vanishing on $\pi$ is at most equal to the number of irreducible characters of $C_{S_n}(\pi)$?
\end{problem}

\subsection{Some complexity observations}

It was shown in \cite[Thm 7.1]{PP} that for any $\lambda$, $\nu$ deciding whether $\chi^\lambda(\nu)=0$ is NP-hard. It can be reduced to the classical NP-complete Knapsack problem.  Expanding the power sum  function $p_\mu$ into Schur functions $s_\lambda$ (see \cite[Cor. 7.17.4]{stanley})
we have
$p_\mu=\sum_{\lambda\in P(n)}\chi^\lambda(\mu)s_\lambda.$
Hence, the irreducible character values on $\mu$ are the coefficients in the expansion of $p_\mu$. In \cite[Cor. 4.3]{BFom}, the authors showed that  there exist probabilistic polynomial time algorithms for computing an expansion  of a given power sum $p_\mu$. That is, there exist probabilistic polynomial time algorithms for computing the set \{$\chi^\lambda(\mu)\mid\lambda\in P(n)$\}.  Similarly, in \cite{Hep94} it was shown that deciding whether $\chi^\lambda(\mu)=0$ can be done in probabilistic polynomial time.

\subsection{The Staircase-like partition}

For each $n$, we consider self-conjugate partitions that are close to staircase partitions as follows. For each $n\in \mathbb{N}$, there exist $m$, $k$ such that $n=\frac{m(m+1)}{2}+k$ where $0\leq k\leq m$. It is not hard to verify the following conditions.

(5.5.1) If $m$ is even, then for each $k$ there are self-conjugate partitions  $\lambda\vdash n$ such that $\rho_m\subseteq\lambda\subseteq \rho_{m+1}$.

(5.5.2) If $m$ is odd and $k$ is even, then for each $k$ there are self-conjugate partitions  $\lambda\vdash n$ such that $\rho_m\subseteq\lambda\subseteq \rho_{m+1}$.

(5.5.3) If $m$ and $k$ are odd, then no self-conjugate partitions of $n$ lie between $\rho_m$ and $\rho_{m+1}$. But we can find self-conjugate partitions $\lambda\vdash n$ such that $\rho_{m-1}\subseteq\lambda\subseteq \rho_{m+2}$.

 In fact, if $k=1$ we can find self-conjugate partitions $\lambda\vdash n$ such that $\rho_{m-1}\subseteq\lambda\subseteq \rho_{m+1}$.
For example, if $n=7$, then $m=3$ and $k=1$. We let $\lambda=(4,1,1,1)$.
If $k=3,5,7...$ we can find self-conjugate partitions $\lambda\vdash n$ such that $\rho_{m}\subseteq\lambda\subseteq \rho_{m+2}$.
For example, if $n=18$, then $m=5$ and $k=3$. We let $\lambda=(5,4,4,4,1)$.

\begin{definition}
We will call it a \emph{staircase-like partition} if a
self-conjugate partition satisfies one of three conditions in (5.5.1), (5.5.2) and (5.5.3) above.
\end{definition}

There exist  staircase-like partitions for each $n\geq3$. If $n$ is a triangular number, i.e. of the form $m(m+1)/2$, then the corresponding staircase-like partition is just $\rho_m$.
Comparing with Conjecture 1.1 of \cite{PPV}, we propose a generalised Saxl Conjecture as follows.
\begin{conjecture}[\emph{Generalised Saxl Conjecture}]\label{con:slp}
For any $\lambda$ of size $n$ different from 2, 4, 9, if $\lambda$ is a staircase-like partition, then $[\lambda]\otimes[\lambda]$ contains all irreducible representations of $S_n$ as constituents.
\end{conjecture}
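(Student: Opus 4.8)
The plan is to transport the decomposition machinery of Section~\ref{se:fixed} from the staircase $\rho_m$ to an arbitrary staircase-like $\lambda$, and to run an induction on $n$ that peels off the outermost columns and rows (the self-conjugate ``shell'') of $\lambda$. Since $\lambda$ is self-conjugate and, by (5.5.1)--(5.5.3), is squeezed between two nearby staircases, stripping the shell produces a strictly smaller self-conjugate partition $\bar\lambda$; the first task is to check that $\bar\lambda$ is again staircase-like for the reduced size, so that the induction hypothesis applies to it. Writing $\lambda=\bar\lambda+\delta$ rowwise (or, when a vertical split is more convenient, $\lambda=(\bar\lambda+c)\cup r$ for suitable strips $c,r$) then sets up a reduction entirely in the spirit of Lemma~\ref{lem:either} and Lemma~\ref{lem:vs}.

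The inductive step would proceed exactly as in Lemmas~\ref{lem:w2m-1}--\ref{lem:dm3}. Given a target $\mu\vdash n$, I would produce, by a case analysis on the arm and leg weights of $\mu$ (using the transpose invariance of Lemma~\ref{lem:conj} to reduce the leg cases to the arm cases), a splitting $\mu=\nu+\upsilon$ with the two properties that (i) $(\delta,\upsilon)\in K$ for the shell $\delta$, and (ii) $\nu$ is an admissible target for $\bar\lambda$. Property~(ii) is then supplied by the induction hypothesis $(\bar\lambda,\nu)\in K$, while property~(i) requires a family of ``building-block'' positivity statements for the shell $\delta$, to be proved either by dominance through Lemma~\ref{lem:mmn} (when the relevant strip has distinct part lengths dominating $\upsilon$) or by direct computation for the finitely many residual shapes, precisely as Lemmas~\ref{lem:m-1} and \ref{lem:mi} furnish the building blocks for $\sigma_m^i$. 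Feeding (i) and (ii) through the semigroup property, or through Lemma~\ref{lem:vs} in the vertical-split version, yields $(\lambda,\mu)\in K$ and closes the step.

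The base of the induction must then be cleared on a computer, as in Subsection~\ref{se:trip}. One fixes a threshold $n_0$ --- the analogue of the bound $14$ in Proposition~\ref{prop:dmu3} and of the general estimate of Proposition~\ref{prp:dmuk} --- below which the weight inequalities driving the case analysis can fail, and verifies $g(\lambda,\lambda,\mu)>0$ for every staircase-like $\lambda\vdash n$ and every $\mu\vdash n$ with $n\le n_0$ using Stembridge's package~\cite{stem} (with Theorem~\ref{thm:dm3} serving as ready input in the Durfee-size-$3$ cases). The excluded sizes $2,4,9$ are exactly the sizes where the tensor square omits a constituent, so they fall naturally among the base-case exceptions. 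Because a fixed $n$ may carry several distinct staircase-like partitions, each one must be carried through the induction separately, and one must confirm that the peeling operation maps the staircase-like family of size $n$ into that of the reduced size.

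The main obstacle is establishing property~(i) across all of the case analysis. For $\rho_m$ the shell removed at each stage is the single uniform family $\sigma_m^i$, whose entire positivity profile is packaged in the clean Lemma~\ref{lem:mi}; for a general staircase-like $\lambda$ the shell $\delta$ depends on how $\lambda$ is positioned between its two bounding staircases, so the corresponding positivity lemmas are no longer one family but must be re-derived shell type by shell type, and the three regimes (5.5.1)--(5.5.3) make the inductive ladder descend by varying amounts and occasionally skip a size. Coordinating all of these shell-positivity inputs simultaneously, while the computational base case grows rapidly with $n$, is exactly where the difficulty concentrates, and is the reason the statement is recorded here as a conjecture rather than proved as a theorem.
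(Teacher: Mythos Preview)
The statement is a \emph{conjecture}, not a theorem: the paper does not prove it. The only evidence the paper offers is the sentence immediately following the conjecture, namely a computer verification for all $n\le 35$ using Stembridge's package. So there is no ``paper's own proof'' to compare your proposal against.

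Your proposal is not a proof either, and to your credit you say so explicitly in the final paragraph. What you have written is a plausible strategic outline in the spirit of Section~\ref{se:fixed}, together with an honest identification of where it breaks down. Let me name the gaps more concretely. First, the claim that peeling the outer shell of a staircase-like $\lambda$ always yields another staircase-like partition is asserted but not checked; in case~(5.5.3) the bounding staircases are $\rho_{m-1}$ and $\rho_{m+2}$, and a single shell removal need not land you back inside the staircase-like family for the reduced size, so the induction may not close. Second, and more seriously, the building-block positivity statements $(\delta,\upsilon)\in K$ for the non-uniform shells $\delta$ are simply not available: Lemma~\ref{lem:mmn} requires distinct row lengths, which the shell of a generic staircase-like partition need not have, and there is no analogue of Lemma~\ref{lem:mi} on hand. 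Third, even if the reduction machinery worked, the base case is no longer a finite list indexed by $m$ but a computation over \emph{all} partitions $\mu\vdash n$ for each small $n$, across several staircase-like $\lambda$ per $n$; the paper's verification up to $n=35$ already required a computer, and there is no argument that a finite threshold $n_0$ suffices. These are precisely the obstacles that keep the statement a conjecture.
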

With the help of computer, we can easily verify Conjecture \ref{con:slp} for $n\leq 35$.
It is interesting to see that Bessenrodt et al. also proposed a generalised Saxl Conjecture which is related to $p$-cores \cite{Bessenrodt19}.

If $\lambda$ is  self-conjugate, then in several ways we can add 1 or 2 boxes on $\lambda$ to make it become another self-conjugate partition.  For example, adding a box on $\rho_4=(4,3,2,1)$ we get $(4,3,3,1)$ which is self-conjugate.  Adding 2 boxes on $\rho_4=(4,3,2,1)$ we get a self-conjugate partition $(5,3,2,1,1)$. We would like to know the growth behavior of Kronecker coefficient as the growth of partitions.
We raise the following problem. Related discussions can be found in \cite{BriRR}.

\begin{problem}\label{prb:add}
For $\lambda\vdash n$,  suppose that $[\lambda]\otimes[\lambda]$ contains all irreducible representations of $S_n$  as constituents. By adding at most 2 boxes on $\lambda$ we get another self-conjugate partition $\mu$ (not uniquely) such that $\lambda\subseteq\mu$ and $|\mu\setminus\lambda|=$2 (or 1).  Does there always exist some $\mu$ such that $[\mu]\otimes[\mu]$ also contains all irreducible representations of $S_{n+2}$ (or $S_{n+1}$)  as  constituents?
\end{problem}

If Conjecture \ref{con:slp} is true, then it can be viewed as a special case of Problem \ref{prb:add}. In fact, we can get $\rho_m$ by adding 1 or 2 boxes on $\rho_{m-1}$ step by step. In each step, we add 1 or 2 boxes on $\rho_{m-1}$ such that the partition is staircase-like.
For example, when $m=5$, one typical process can be illustrated by Young diagrams as follows.
{\tiny\begin{equation*}
\Yvcentermath1
\yng(4,3,2,1)\longrightarrow\yng(5,3,2,1,1)\longrightarrow\yng(5,4,2,2,1)
\longrightarrow\yng(5,4,3,2,1).
\normalsize
\end{equation*}}


\section*{Acknowledgments}
We are very grateful to the editors and referees for their valuable comments and suggestions. Thanks to my undergraduate students Zhenyu Luo and Yuan Jiang for their helpful discussions when I prepare this paper. Many thanks to Haiyan Gao, Xiaorui Hu, Wan Wan, Yaning Xie, Bing Yang and Yanfang Zhang for their helpful advices. Special thanks to Professor Fu Liu for her helpful comments. We also thank Professor John Stembridge for making his Maple package SF \cite{stem} freely available.

\bibliographystyle{amsplain}

\begin{thebibliography}{20}


\bibitem{Andr}
G. E. Andrews, \textit{The Theory of Partitions}, Addision-Wesley, Reading, 1976. 

\bibitem{BFom}
A. Barvinok, S.V. Fomin, \textit{Sparse interpolation of symmetric polynomials}, Advances in Applied Mathematics, \textbf{18}(1997), 271-285.

\bibitem{Bessenrodt17}
C. Bessenrodt, \textit{Critical classes, Kronecker products of spin characters, and the Saxl conjecture}, Algebraic Combinatorics, \textbf{1}(2018), 353-369.

\bibitem{Bessenrodt19}
C. Bessenrodt, C. Bowman, L. Sutton, \textit{Kronecker positivity and 2-modular representation theory}, arXiv:1903.07717v3.


\bibitem{BriRR}
E. Briand, A. Rattan, and M. Rosas,
\emph{On the growth of Kronecker coefficients}, S\'{e}minaire Lotharingien de Combinatoire, 78B (2017) Article 70, 12 pp.


\bibitem{can}
E. R. Canfield,  S. Corteel, C. D. Savage,
\emph{Durfee polynomials,}  Electron. J. Combin. \textbf{5}(1998), Research Paper 32, 21 pp.


\bibitem{CaLiu}
F. Castillo, F. Liu, \textit{Berline-Vergne Valuation and Generalized Permutohedra}, Discrete Comput. Geom. \textbf{60}(2018), 885-908.


\bibitem{Christ}
M. Christandl, A. W. Harrow, G. Mitchison, \textit{Nonzero Kronecker Coefficients and What They Tell us about Spectra},
Commun. Math. Phys. \textbf{270}(2007), 575-585.



\bibitem{GLe}
A. Giambruno, G. Leal, \textit{Central Units, Class Sums and Characters of the Symmetric Group}, Communications in Algebra, \textbf{38}(2010), 3889-3896.

\bibitem{Hep94}
C.T. Hepler, \textit{On the complexity of computing characters of finite groups}, thesis, University of Calgary, 1994, DOI: \url{http://dx.doi.org/10.11575/PRISM/30819}.


\bibitem{Ikenm}
C. Ikenmeyer, \textit{The Saxl conjecture and the dominance order}, Discrete Mathematics \textbf{338}(2015) 1970-1975.

\bibitem{JKer81}
G. James, A.Kerber, \textit{The Representation Theory of the Symmetric Group}, Addison-Wesley, London, 1981.


\bibitem{Luo}
S. Luo, M. Sellke, \textit{The Saxl conjecture for fourth powers via the semigroup property}, J. Algebr. Comb. \textbf{45}(2017) 33-80.

\bibitem{Mac}
I. G. Macdonald, \textit{Symmetric Functions and Hall Polynomials}, (Oxford mathematical monographs), Oxford Univ. Press, Second edition: 1995.


\bibitem{Morotti}
L. Morotti, \textit{On the number of non-zero character values in
generalized blocks of symmetric groups}, J. Algebr. Comb. \textbf{47}(2018) 233-239.


\bibitem{PPV}
I. Pak, G. Panova, E. Vallejo, \textit{Kronecker products, characters, partitions, and the tensor square conjectures}, Adv. in Math. \textbf{288}(2016) 702-731.

\bibitem{PP}
I. Pak, G. Panova, \textit{On the complexity of computing Kronecker coefficients},
Comput. Complex. \textbf{26}(2017), 1-36.

\bibitem{PP-1}
I. Pak, G. Panova, \textit{Upper bounds on Kronecker coefficients with few rows}, arXiv:2002.10956v1 [math.CO].

\bibitem{BP18}
B. Pittel, \textit{Asymptotic joint distribution of the extremities of a random Young diagram and enumeration of graphical partitions}, Adv.  Math. \textbf{330}(2018), 280-306.

\bibitem{Postni}
 A. Postnikov,  \textit{Permutohedra, Associahedra, and Beyond},
International Mathematics Research Notices, Vol. 2009, No. 6, 1026-1106.

\bibitem{Sellke}
M. Sellke, \textit{Covering $Irrep(S_n)$ With Tensor Products and Powers}, arXiv:2004.05283v3.

\bibitem{stanley}
R.P. Stanley, \textit{Enumerative Combinatorics}, Vol. 1 and Vol. 2, Cambridge Univ. Press, Cambridge, 2012, 1999.


\bibitem{stem}
J. R. Stembridge, \textit{A Maple package for symmetric functions},
J. Symbolic Comput. \textbf{20}(1995) 755-768, the Maple package
SF is available at \url{http://www.math.lsa.umich.edu/~jrs}.


\bibitem{Tewari}
V. V. Tewari, \textit{Kronecker coefficients for some near-rectangular partitions},
Journal of Algebra  \textbf{429}(2015)  287-317.

\end{thebibliography}

\end{document}